\documentclass[11pt,oneside,leqno]{amsart}
\usepackage[utf8]{inputenc}
\usepackage[english]{babel}
\usepackage{amssymb,amsmath,latexsym,amsthm,amsfonts,bbm,dsfont}
\usepackage{geometry}
\usepackage{float}
\usepackage[colorlinks=true,citecolor=red,linkcolor=blue,pdfpagetransition=Blinds]{hyperref}
\numberwithin{equation}{section}
\usepackage{mathtools, stmaryrd}
\usepackage{natbib}
\usepackage{textgreek}
\usepackage{xcolor}
\numberwithin{equation}{section}
\usepackage{xcolor}
\usepackage{graphicx}

\newcommand{\E}{\mathbb{E}}

\newtheorem{theorem}{Theorem}[section]
\newtheorem{proposition}[theorem]{Proposition}
\newtheorem{lemma}[theorem]{Lemma}

\newtheorem{remark}[theorem]{Remark}
\newtheorem{assumption}[theorem]{Assumption}
\newtheorem{example}[theorem]{Example}
\begin{document}

\title[Inverse problems for semi-discrete stochastic parabolic  operators]{Inverse Random Source and Cauchy Problems for Semi-Discrete Stochastic Parabolic Equations in Arbitrary Dimensions}
%%%%%%%%%%%%%
\author[R. Lecaros]{Rodrigo Lecaros}
\address[R. Lecaros]{Departamento de Matem\'atica, Universidad T\'ecnica Federico Santa Mar\'ia,  Santiago, Chile.}
\email{rodrigo.lecaros@usm.cl}
%%%%%%%%%%%%%
\author[A. A. P\'erez]{Ariel A. P\'erez}
\address[A. A. P\'erez]{Departamento de Matem\'atica, Universidad del B\'io-B\'io, Concepci\'on, Chile.}
\email{aaperez@ubiobio.cl}
%%%%%%%%%%%%
\author[M. F. Prado]{Manuel F. Prado}
\address[M. F. Prado]{(Corresponding Author)Departamento de Matem\'atica, Universidad T\'ecnica Federico Santa Mar\'ia,  Santiago, Chile.}
\email{mprado@usm.cl}
%%%%%%%%%%%%%%
\subjclass[2020]{35R30, 35R60, 65C30, 60H15}
\keywords{Inverse problems, Stochastic processes, semi-discrete stochastic parabolic equations,
global Carleman estimate.}
%%%%%%%%%%%%%%%%%%%
\begin{abstract}
In this paper, we study two types of inverse problems for space semi-discrete stochastic parabolic equations in arbitrary dimensions. The first problem concerns a semi-discrete inverse source problem, which involves determining the random source term of the white noise in the semi-discrete stochastic parabolic equation using observation data of the solution at the terminal time and on an arbitrary open spatial subdomain over a time interval. The second problem addresses a semi-discrete Cauchy inverse problem, which involves determining the solution of the stochastic parabolic equation in a space-time subdomain, from measurements of the solution and the trace of its discrete spatial derivative on an arbitrary open subset of the lateral boundary over a time interval. The key tools are three new global Carleman estimates for the semi-discrete stochastic parabolic operator, one for interior observations and two for boundary observations (homogeneous and nonhomogeneous Dirichlet conditions). Applying these Carleman estimates, we obtain Lipschitz and Hölder stability for the first and second inverse problems, respectively.
\end{abstract}
%%%%%%%%%%%%%%%%
\maketitle
\tableofcontents 
%%%%%%%%%%%%%%%%%%%%
%-------------
\section{Introduction}
%---------------------------------------------------%

Let $(\Omega,\mathcal{F},\{\mathcal{F}_{t}\}_{t\geq 0},\mathbb{P})$ be a complete filtered probability space on which a one-dimensional standard Brownian motion $\{B(t)\}_{t\geq 0}$ is defined such that $\{\mathcal{F}_t\}_{t\geq0}$ is the natural filtration generated by $B(t)$, augmented by all $\mathbb{P}$ null sets in $\mathcal{F}$ and $\mathcal{X}$ a Banach space. The space $L^2_{\mathcal{F}}(\Omega;L^2(0,T;\mathcal{X}))$ consists of all $\mathcal{X}$-valued $\{\mathcal{F}_t\}_{t\geq 0}$-adapted processes $X(\cdot)$ such that 
$
\E\left(\int_0^T\|X(\cdot)\|^2_{\mathcal{X}}\,dt\right)<\infty.
$ {  The space $L^2_{\mathcal{F}}(\Omega;H^1(0,T;\mathcal{X}))$ consists of all $\mathcal{X}$-valued $\{\mathcal{F}_t\}_{t\geq 0}$-adapted processes $X(\cdot)$ such that 
$
\E\left(\int_0^T\|X_t(\cdot)\|^2_{\mathcal{X}}+\|X(\cdot)\|^2_{\mathcal{X}}\,dt\right)<\infty.
$ } 
The space $L^2_{\mathcal{F}}(\Omega;L^\infty(0,T;\mathcal{X}))$ consists of all $\mathcal{X}$-valued $\{\mathcal{F}_{t}\}_{t\geq 0}$-adapted bounded processes. Finally, $L^2_{\mathcal{F}}(\Omega;C([0,T];\mathcal{X}))$ denotes the space of all { $\mathcal{X}$}-valued $\{\mathcal{F}_{t}\}_{t\geq 0}$-adapted continuous processes $X(\cdot)$ such that 
$
\E\left(\sup_{t\in[0,T]}\|X(\cdot)\|^2_{\mathcal{X}}\right)<\infty.
$ 
For simplicity, we often write $L^2_{\mathcal{F}}(0,T;\mathcal{X})$, {  $H^1_{\mathcal{F}}(0,T;\mathcal{X})$} and $L^\infty_{\mathcal{F}}(0,T;\mathcal{X})$ whenever no confusion may arise.

Let $T>0$ and $n\in \mathds{N}$, and $G:=(0,1)^{n}\subset \mathbb{R}^n$. We consider the following stochastic parabolic equation: 
\begin{equation}\label{systemofcontrol}
dy-\sum_{i=1}^{n}(\gamma^{i} y_{x_i})_{x_i}dt =\left(\sum_{i=1}^{n}a^{1i}y_{x_i}+a^2y\right)dt+(a^3\,y+g)dB(t) \quad \text{in } G\times (0,T).
\end{equation}
In \eqref{systemofcontrol}, $y$ denotes the state variable, $g$ is a source term representing the intensity of the random force of white noise, and $\gamma^{i}, a^{1i}$, and $a^{k}$ (for $i=1,\ldots,n$ and $k=2,3$) are given functions. We refer to \cite{KR77}, \cite{Zho92}, and \cite{lu2021mathematical} for the existence and uniqueness results of \eqref{systemofcontrol}.

The main objective of this paper is to study the inverse random source problem and the Cauchy problem associated with the semi-discrete spatial approximation of the stochastic parabolic equation \eqref{systemofcontrol}. To this end, we introduce the spatial discretization employed throughout this work. 

Let $N\in\mathbb{N}$ be sufficiently large and define a set of points $\mathcal{M}= G\cap h\mathbb{Z}^{n}$ with $h=1/(N+1)$, called the primal mesh, we define the dual mesh in the direction $e_{k}$, where $\{e_{k}\}_{k=1}^{n}$ denotes the standard basis of $\mathbb{R}^{n}$, by
\begin{equation*}
    \mathcal{M}_{k}^{\ast}:= \left\{ x\pm\frac{h}{2}e_{k}\mid x\in \mathcal{M}\right\}.
\end{equation*}
This allows us to define the boundary points in the $e_{k}$ direction as $
    \partial_{k}\mathcal{M}:= (\mathcal{M}^{\ast}_{k})^{\ast}_{k}\setminus \mathcal{M}$,
and we set
\[
        \partial \mathcal{M}:=\bigcup_{k=1}^{n} \partial_{k}\mathcal{M},\quad\textrm{ and }\quad 
        \overline{\mathcal{M}}:=\bigcup_{k=1}^{n} (\mathcal{M}^{\ast}_{k})^{\ast}_{k}.
\]
We denote by $C(\mathcal{M})$ the set of real-valued functions defined in the mesh $\mathcal{M}$. 

The outward normal to $\mathcal{M}$ in the direction $e_{i}$ is defined as $\nu_{i}\in C(\partial\mathcal{M}_{i})$:
\begin{equation}\label{normal:function}
    \forall x\in\partial_{i}\mathcal{M}, \quad
    \nu_{i}(x):=\begin{cases} 
     1 &  \mbox{if }x-\tfrac{h}{2}e_i\in \mathcal{M}_{i}^{\ast} \ \mbox{and } x+\tfrac{h}{2}e_i\notin\mathcal{M}_{i}^{\ast},\\ 
    -1 &  \mbox{if }x-\tfrac{h}{2}e_i\notin \mathcal{M}_{i}^{\ast} \ \mbox{and }x+\tfrac{h}{2}e_i\in\mathcal{M}_{i}^{\ast},\\ 
     0 &  \mbox{otherwise}.
    \end{cases}
\end{equation}
We also define the trace operator $t_{r}^{i}$ for $u\in C(\mathcal{M}^{\ast}_{i})$ by
\begin{equation}
    \forall x\in \partial_{i}\mathcal{M},\quad 
    t_{r}^{i}(u)(x):=\begin{cases}
    u\!\left(x-\tfrac{h}{2}e_i\right) & \nu_{i}(x)=1,\\
    u\!\left(x+\tfrac{h}{2}e_i\right) & \nu_{i}(x)=-1,\\
     0 & \nu_{i}(x)=0 .
    \end{cases}
\end{equation}
Finally, to obtain the discrete version of our system via the finite difference method, we introduce the difference and average operators in the direction $e_{k}$:
\begin{equation*}
    (D_{k}u)(x):=\frac{\tau_{+k}u(x)-\tau_{-k}u(x)}{h},\quad
    (A_{k}u)(x):=\frac{\tau_{+k}u(x)+\tau_{-k}u(x)}{2},\quad \text{for }x\in\mathcal{M}^{\ast}_{k},
\end{equation*}
where $\tau_{\pm k}u(x):= u\!\left(x\pm\frac{h}{2}e_{k}\right)$.

The Banach spaces $\mathcal{X}$ required for the formulation of both inverse problems are defined as follows. We denote by $L^p_{h}(\mathcal{M})$ with $1\leq p<\infty$ the space of function defined on $\mathcal{M}$ with the associated norm.
\[
\|u\|_{L_{h}^p(\mathcal{M})}^p:= \int_{\mathcal{M}}|u|^p\quad \text{with}\quad
\int_{\mathcal{M}}|u|^p:= h^n \sum_{x\in \mathcal{M}}|u(x)|^p.
\]
Analogously, we denote by $W^{1,p}_h(\mathcal{M})$, $W_h^{2,p}(\mathcal{M})$, and $L^{\infty}_{h}(\mathcal{M})$ the set of functions defined in $\mathcal{M}$ with the associated norms:
\begin{equation*}
    \|u\|_{W_h^{1,p}(\mathcal{M})}^p := \|u\|_{L_h^p(\mathcal{M})}^p+{ \sum_{i=1}^{n}}\int_{\mathcal{M}_i^{\ast}}|D_iu|^p,
\end{equation*}
\begin{equation*}
    \|u\|_{W_h^{2,p}(\mathcal{M})}^p :=  \|u\|_{W_h^{1,p}(\mathcal{M})}^p+{ \sum_{i,j=1}^{n}}\int_{{ \mathcal{M}_{ij}^{\ast}}}|D_{ij}^2u|^p,\quad\text{and}\quad \|u\|_{L_h^{\infty}(\mathcal{M})}:=\max_{x\in \mathcal{M}}|u(x)|,
\end{equation*}
respectively. In particular, for $p=2$, we adopt the usual notation $W^{1,2}_h(\mathcal{M})=H_h^{1}(\mathcal{M})$ and $W_h^{2,2}(\mathcal{M})=H_h^{2}(\mathcal{M})$.  

For the boundary, we proceed as above, that is, for $u\in C(\partial \mathcal{M})$ we define the integral over the boundary $\partial\mathcal{M}$ as
\[
\int_{\partial \mathcal{M}}u:= \sum_{i=1}^{n}\int_{\partial_i\mathcal{M}}u(x)\quad \text{with}\quad
\int_{\partial_i\mathcal{M}}u(x):= h^{n-1}\sum_{x\in\partial_i\mathcal{M}} u(x).
\]
Then, we denote as $L^{2}_{h}(\partial\mathcal{M})$ and $H^{1}_{h}(\partial\mathcal{M})$ the set of functions defined on $\partial\mathcal{M}$ with the associated norm: \[
\left\|u \right\|^{2}_{L^{2}_{h}(\partial\mathcal{M})}:=\int_{\partial\mathcal{M}}|u|^2\quad \text{and}\quad\left\| u\right\|^{2}_{H^{1}_{h}(\partial\mathcal{M})}:={ \sum_{i=1}^n}\left(\int_{\partial_i \mathcal{M}}|u|^2+\sum_{\substack{j=1 \\ j \neq i}}^{n}\int_{(\partial_i\mathcal{M})_{j}^{\ast}}|D_ju|^2\right),
\]
respectively. Finally, following \cite{EDG:2011}, we adopt the discrete $H^{1/2}_{h}$-norm defined for $v \in C(\partial\mathcal{M})$ by
\[
\|v\|_{H_{h}^{1/2}(\partial \mathcal{M})}:= \min_{\left.u\right|_{\partial\mathcal{M}}=v}\|u\|_{H^1_{h}(\mathcal{M})}.
\]

Following the same reasoning, we analogously define the semi-discrete spaces \\
$L^2_{\mathcal{F}}(0,T;L_h^\infty(\mathcal{M}))$, $L^2_{\mathcal{F}}(0,T;H_h^1(\mathcal{M}))$, $L^2_{\mathcal{F}}(0,T;L_h^2(\partial\mathcal{M}))$, $L^2_{\mathcal{F}}(0,T;H_h^1(\partial\mathcal{M}))$, and \\
$L^2_{\mathcal{F}}(0,T;H_h^{1/2}(\partial\mathcal{M}))$.
%-----------------------------

Given the preceding definitions, we are now ready to formulate the problems to be addressed. Let $\gamma_i$ denote the sampling of $\gamma^{i}$ on both the primal and dual meshes. Furthermore, { let $a_{1i}$, $a_2$, and $a_3$ represent the evaluations of $a^{1i}$, $a^2$, and $a^3$} on the primal mesh, for $i = 1, \ldots, n$, respectively. We now formulate the semi-discrete counterpart of the inverse problems considered in this paper, namely, the semi-discrete inverse random source problem and the semi-discrete Cauchy problem for $n$-dimensional stochastic parabolic equation.

%---------------semi-discrete inverse random source problem------------------%
\textbf{Semi-discrete inverse random source problem.} We consider the following $n$-dimensional semi-discrete stochastic parabolic equation, derived from \eqref{systemofcontrol}, assuming $a_3 = 0$:
\begin{equation}\label{systemrandomsource}
    \begin{cases}
    \mathcal{A}_{h}w=\left(\sum_{i=1}^n a_{1i}A_iD_i(w)+a_2w\right)dt+gdB(t),     &\text{in } Q:= \mathcal{M}\times (0,T),  \\
    w=0,\quad\text{on } { \partial Q:=\partial \mathcal{M}\times (0,T)},&\left.w\right|_{t=0}=w_0, \quad\text{in } \mathcal{M},
    \end{cases}
\end{equation}
where the second-order operator is defined by $\mathcal{A}_h w := dw - \sum_{i=1}^n D_i(\gamma_i D_i(w))dt$, and $w_0$ is the initial condition. 

The following assumption on $\gamma_i$ will be needed throughout the paper. Let $\gamma_i>0$ for all $i=1,...,n$ and 
$$
 \mbox{reg}(\gamma):= \text{ess}\sup_{\substack{x\in G \\ i=1,\cdots,n}}\left(\gamma_i+\frac{1}{\gamma_i}+{ \sum_{j=1}^{n}|\partial_{j}\gamma_i|^2}\right)<c_0.
$$
for any constant $c_0> 0$.
{ 
Now, we define the observation operator $\Lambda_1: L^2_{\mathcal{F}}(0,T;H_h^1(\mathcal{M})) \longrightarrow X_1
$
as
$$
\Lambda_1(g) := \left(\left.w\right|_{(G_0\cap\mathcal{M})\times(0,T)}, \left.w\right|_{t=T}\right),
$$
where $G_0$ is an arbitrary open subset of $G$, and $w$ denotes the solution to \eqref{systemrandomsource} associated with the source term $g$.} We set $X_1:= L^2_{\mathcal{F}}(0,T;L_h^2(G_0\cap\mathcal{M}))\times L^2_{\mathcal{F}_T}(\Omega;L^2_{h}(\mathcal{M}))$ endowed with the norm
$$
\|(f_1,f_2)\|_{X_1}:= \|f_1\|_{L^2_{\mathcal{F}}(0,T;L_h^2(G_0\cap\mathcal{M}))}+\|f_2\|_{L^2_{\mathcal{F}_T}(\Omega;L^2_{h}(\mathcal{M}))}.
$$
The direct problem consists in solving \eqref{systemrandomsource} for a given $g$ and then computing $\Lambda_{1}(g)$. The inverse problem is to recover $g$ from the observation $\Lambda_1(g)$. We now state our first main result, which provides a conditional stability estimate for this inverse problem.

\begin{theorem}\label{firstresulttheorem}
Let $a_{1i}\in L^\infty_{\mathcal{F}}(0,T;L_h^\infty(\mathcal{M}))$ for $i=1,\ldots,n$, and $a_2\in L_{\mathcal{F}}^{\infty}(0,T;L_h^{n^{\ast}}(\mathcal{M}))$ where $n^{\ast}$ satisfies
\begin{equation}\label{conditionofn^{ast}}
\begin{cases}
    n^{\ast}>2,&\quad n=2,\\
    n^{\ast}\geq n,&\quad n\geq 3.
\end{cases}    
\end{equation}
Assume that $g_k \in L^2_{\mathcal{F}}(0,T;H_h^1(\mathcal{M}))$ for $k=1,2$, and that, for $i=1,\ldots,n$,
\begin{equation}\label{conditionong}
|D_i(g_1 - g_2)| \leq \mathcal{C}|A_i(g_1 - g_2)|,  \text{ in } { Q_i^*:=(0,T)\times\mathcal{M}_{i}^{\ast}}, \; \mathbb{P}\text{-a.s.}
\end{equation}
Then, there exists a constant $\mathcal{C} > 0$, independent of $h$, such that
\begin{equation}\label{eq:firstresult}
\|g_1 - g_2\|_{L^2_{\mathcal{F}}(0,T;L_h^2(\mathcal{M}))} \leq \mathcal{C} \|\Lambda_1(g_1) - \Lambda_1(g_2)\|_{X_1}.
\end{equation}
\end{theorem}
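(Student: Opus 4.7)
By linearity, it suffices to prove the stability estimate for $w := w_1 - w_2$ and $g := g_1 - g_2$: then $w$ solves the system in \eqref{systemrandomsource} with source $g\,dB$, zero initial data and homogeneous Dirichlet boundary condition, and the observation reads $\Lambda_1(g_1)-\Lambda_1(g_2)=((\partial_\nu w)|_{\Gamma^+\times(0,T)},w(T))$. Consequently, I only need to bound $\|g\|_{L^2_{\mathcal F}(0,T;L^2_h(\mathcal M))}$ by this observation.

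The argument combines two ingredients: the homogeneous-boundary Carleman estimate announced in the abstract, applied to $w$, and an It\^o identity for a weighted energy of $w$. For a Carleman weight $\theta=e^{s\varphi}$ and parameters $s,\lambda\gg 1$, the Carleman estimate yields an inequality of the schematic form
\begin{equation*}
s\lambda^2 \E\iint_Q \theta^2\varphi |w|^2 + s\lambda \E\iint_Q \theta^2\varphi \sum_i |D_iw|^2 \le C\, \E\iint_{\Gamma^+\times(0,T)} \theta^2 |\partial_\nu w|^2 + C\, \E\iint_Q \theta^2 g^2 + \textup{l.o.t. in }w.
\end{equation*}
The drift perturbations $\sum_i a_{1i}A_iD_iw$ and $a_2w$ must be absorbed on the left: the first is handled by the $|D_iw|^2$ term using $a_{1i}\in L^\infty$, while the second requires a discrete Sobolev--H\"older inequality (the semidiscrete embedding $H^1_h\hookrightarrow L^{2n/(n-2)}_h$), which is precisely where the dimensional hypothesis \eqref{conditionofn^{ast}} on $n^\ast$ enters.

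Next, I would apply It\^o's formula to $\theta^2|w|^2$, integrate over $\mathcal M\times[0,T]$, and take expectation. Since $w(0)=0$ and the stochastic integral is a martingale, the quadratic-variation contribution produces exactly $\E\iint_Q\theta^2 g^2$, yielding, after discrete integration by parts in space (the boundary contributions vanish because $w|_{\partial\mathcal M}=0$),
\begin{equation*}
\E\int_\mathcal{M} \theta(T)^2 |w(T)|^2 = \E\iint_Q \theta^2 g^2\,dt + \E\iint_Q \Bigl(2\theta\theta_t\,w^2 - 2\theta^2\sum_i\gamma_i|D_iw|^2 + 2\theta^2 w\sum_i a_{1i}A_iD_iw + 2\theta^2 a_2 w^2\Bigr)dt.
\end{equation*}
Rearranging expresses $\E\iint_Q\theta^2 g^2$ in terms of $|w(T)|^2$ plus integrals quadratic in $w$ and $D_iw$.

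Substituting this identity into the right-hand side of the Carleman estimate, the quadratic-in-$w$ remainder is absorbed into the Carleman left-hand side once $s,\lambda$ are large enough, leaving
\begin{equation*}
\E\iint_Q \theta^2 g^2 \le C\, \E\iint_{\Gamma^+\times(0,T)} \theta^2 |\partial_\nu w|^2 + C\, \E\int_\mathcal{M} \theta(T)^2 |w(T)|^2.
\end{equation*}
Hypothesis \eqref{conditionong} is used to dominate any discrete derivative $D_i g$ produced along commutator and IBP computations by $A_i g$, hence by the $L^2_h$-norm of $g$ itself. Fixing $s,\lambda$, the weight $\theta$ is bounded above and below by positive constants on $\mathcal M\times[0,T]$ independently of $h$, which delivers \eqref{eq:firstresult}. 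The main obstacle is arranging for all absorption steps --- both of the drift terms $a_{1i}A_iD_iw$ and $a_2w$ in the Carleman estimate and of the It\^o remainder --- to occur with constants uniform in the mesh size $h$; the dimensional tuning in \eqref{conditionofn^{ast}} is the technical bottleneck that makes the $a_2w$ absorption work uniformly in $h$.
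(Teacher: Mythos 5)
Your reduction to the difference $w=w_1-w_2$, $g=g_1-g_2$, and your treatment of the drift terms (absorbing $a_{1i}A_iD_iw$ via the $L^\infty$ bound and $a_2w$ via the discrete Sobolev embedding, which is indeed where \eqref{conditionofn^{ast}} enters) match the paper. The divergence, and the gap, lies in the form of the Carleman estimate you assume. You posit a ``schematic'' estimate in which the weighted source term $\E\iint_Q\theta^2 g^2$ sits on the \emph{right-hand} side, and you then need a separate It\^o identity for $\theta^2|w|^2$ to trade $\E\iint_Q\theta^2g^2$ for the terminal observation and absorbable quadratic terms. That is the classical route in the continuous stochastic setting, but it is not the estimate available here: the semidiscrete Carleman estimate the paper establishes (Theorem \ref{theo:Carleman}) already carries $\E\int_Q s\lambda_1^2\varphi\,e^{2s\varphi}|g|^2\,dt$ on the \emph{left}, at the price of the terms $\sum_i\E\int_{Q_i^*}e^{2s\varphi}|D_ig|^2\,dt$ on the right. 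You neither prove nor cite a uniform-in-$h$ estimate of the form you assume, and in the semidiscrete stochastic framework obtaining it is precisely the delicate point (the quadratic-variation terms in the conjugated variable produce the $|D_ig|^2$ contributions when returning to the original variable). So the key input of your argument is taken for granted in a form that the paper does not provide.

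A symptom of this mismatch is your use of hypothesis \eqref{conditionong}: you invoke it vaguely to control ``any $D_ig$ produced along commutator and IBP computations,'' but in your scheme no such terms ever appear (your Carleman has only $\theta^2g^2$, and It\^o's formula produces only the quadratic variation $g^2\,dt$), so the hypothesis would be superfluous --- whereas in the actual proof it is essential. The paper's argument is a direct application of Theorem \ref{theo:Carleman} to $w$: the drift is placed in $f$ and absorbed (H\"older plus the discrete Sobolev inequality for $a_2$, exactly as you indicate), and the right-hand terms $\E\int_{Q_i^*}e^{2s\varphi}|D_ig|^2$ are bounded using \eqref{conditionong}, the inequality $|A_ig|^2\le A_i(|g|^2)$, and discrete integration by parts for the average operator, so that they are absorbed by the left-hand term $\E\int_Q s\lambda_1^2\varphi e^{2s\varphi}|g|^2$ for $\tau$ large; no auxiliary It\^o energy identity is needed. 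To repair your proposal you would either have to derive your schematic Carleman estimate with constants uniform in $h$ (essentially redoing the appendix in a different normalization), or switch to the stated Theorem \ref{theo:Carleman} and supply the $D_ig$ argument in which \eqref{conditionong} genuinely intervenes.
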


%\begin{remark}\label{remark:uniquess}
%The uniqueness follows directly from Theorem \ref{firstresulttheorem}: if $\Lambda_1(g_1) =\Lambda_1(g_2)$ $\mathbb{P}$-a.s., then $g_1 = g_2$ in $Q$, $\mathbb{P}$-a.s. 
%\end{remark}
{ \begin{remark}
    Here we provide an example that satisfies \eqref{conditionong}. Indeed, \begin{equation*}
    g(x) = \prod_{k=1}^n (x_k + C')\mathcal{S}(t,\omega),
\end{equation*}
for any constant $C' > 0$ and $\mathcal{S}\in L^{2}_{\mathcal{F}}(0,T);\mathbb{R})$ is a real-valued adapted process satisfying $\mathcal{S}(t,\omega)>0$ $\mathbb{P}\text{-a.s.}$. To verify condition \eqref{conditionong}, fix a direction
$i \in \{1, \ldots, n\}$ and write $g(x) = (x_{i} + C') \cdot R_{i}(x)$, where
\begin{equation*}
    R_{i}(x) := \prod_{k \neq i} (x_{k} + C')\mathcal{S}(t,\omega).
\end{equation*}
Since the discrete operators $D_{i}$ and $A_{i}$ act only on the $i$-th variable, a direct
computation on $\mathcal{M}_{i}^{\ast}$ gives
\begin{equation*}
    D_{i} g(x)
    = \frac{g\!\left(x + \tfrac{h}{2}e_{i}\right) - g\!\left(x - \tfrac{h}{2}e_{i}\right)}{h}
    = \frac{\bigl(x_{i} + \tfrac{h}{2} + C'\bigr) - \bigl(x_{i} - \tfrac{h}{2} + C'\bigr)}{h}
      \cdot R_{i}(x)
    = R_{i}(x),
\end{equation*}
\begin{equation*}
    A_{i} g(x)
    = \frac{g\!\left(x + \tfrac{h}{2}e_{i}\right) + g\!\left(x - \tfrac{h}{2}e_{i}\right)}{2}
    = (x_{i} + C') \cdot R_{i}(x).
\end{equation*}
Therefore,
\begin{equation*}
    \frac{|D_{i} g(x)|}{|A_{i} g(x)|}
    = \frac{R_{i}(x)}{(x_{i} + C')\, R_{i}(x)}
    = \frac{1}{x_{i} + C'}
    \leq \frac{1}{C'},
    \qquad \forall\, x \in \mathcal{M}_{i}^{\ast},
\end{equation*}
where we used $x_{i} \geq 0$ and $\mathcal{S}(t,\omega)>0$ $\mathbb{P}\text{-a.s.}$. Hence condition (1.6) holds with constant
$C = 1/C'$, uniformly in $h$, for every $i = 1, \ldots, n$.
  More generally, it is possible to build a family of functions that verify that condition. For any adapted process $g\in L^{2}_{\mathcal{F}}(0,T;H_{h}^{1}(\mathcal{M}))$ satisfying  $\|D_{i}g\|_{L^{\infty}_{\mathcal{F}}(0,T;L^{\infty}_{h}(\mathcal{M}_{i}))}<C'$ and $|A_{i}g(x,t,\omega)|\geq\alpha>0$ for all $x\in \mathcal{M}_{i}$, $t\in (0,T)$, $\mathbb{P}\text{-a.s.}$; it follows that $|D_{i}g(x,t,\omega)|\leq C'=\frac{C'}{\alpha}\cdot\alpha\leq C|A_{i}g(x,t,\omega)|$ with $C=\tfrac{C'}{\alpha}$.
\end{remark}}

\textbf{Semi-discrete Cauchy Problem.} We consider the same spatial semi-discretization of \eqref{systemofcontrol}, with $g = 0$:{ 

\begin{equation}\label{eq:cauchy problem}
    \mathcal{A}_{h}w=\left(\sum_{i=1}^n a_{1i}A_iD_i(w)+a_2w\right)dt+a_3w\,dB(t),\quad \text{in } Q,
\end{equation}
}
While the discretization shares structural similarities with the previous inverse problem, the nature of the available data differs. In this setting, our objective is to recover $w$ within a subdomain $D \subset Q$. This is achieved through the observation operator $\Lambda_2: L^2_{\mathcal{F}}(0,T;H_h^2(\mathcal{M})) \to X_2 $, which is defined as follows:
\[
\Lambda_2(w) := \left(\left.w\right|_{\Gamma \times (0,T)},\left. \partial_\nu w\right|_{\Gamma \times (0,T)}\right),
\]
where $w$ solves \eqref{eq:cauchy problem}. The normal derivative is given by
\[
\partial_{\nu}w := \sum_{i=1}^n t_r^{i}(D_i w)e_i\nu_i,
\]
 and the set $\Gamma=\Gamma_0\cap\partial M$, where $\Gamma_0$ is an arbitrary non-empty open subset of $\partial G$. In this case, we set $X_2:= H^1_{\mathcal{F}}(0,T;H_h^1(\Gamma) \cap H_h^{1/2}(\Gamma)) \times \left(L^2_{\mathcal{F}}(0,T;L_h^2(\Gamma))\right)^n$ endorsed with norm $$
\left\|\left(f_1,f_2,...,f_{n},f_{n+1}\right)\right\|_{X_2}:= \|f_{1}\|_{H^1_{\mathcal{F}}(0,T;H_h^1(\Gamma) \cap H_h^{1/2}(\Gamma)) }+\sum_{i=2}^{n+1}\|f_i\|_{L^2_{\mathcal{F}}(0,T;L_h^2(\Gamma))},
$$
for $f_{1}\in H^1_{\mathcal{F}}(0,T;H_h^1(\Gamma) \cap H_h^{1/2}(\Gamma))$ and $f_i\in L^2_{\mathcal{F}}(0,T;L_h^2(\Gamma))$ with $i=2,\ldots,n+1$.

The inverse problem consists of determining the solution $w$ in an open subdomain $\mathcal{M}_0 \subset \mathcal{M}$ such that $\overline{\mathcal{M}}_0 \subset \mathcal{M} \cup \Gamma$ and $\partial \mathcal{M}_0 \cap \partial \mathcal{M} \subsetneq \Gamma$ over the time interval $(\varepsilon, T - \varepsilon)$, based on the boundary measurement given by $\Lambda_2(w)$. We establish the following Hölder-type stability estimate:

\begin{theorem}\label{Teo:SecondResult}
 Let $a_{1i},a_2\in L^\infty_{\mathcal{F}}(0,T;L_h^\infty(\mathcal{M}))$, $a_3 \in L^\infty_{\mathcal{F}}(0,T;W_h^{1,\infty}(\mathcal{M}))$, \break$\xi \in H^1_{\mathcal{F}}(0,T;H_h^1(\Gamma)\cap H_{h}^{1/2}(\Gamma))$, and let $\varepsilon > 0$. Then there exist constants $\mathcal{C} > 0$, $h^* > 0$, and $\kappa \in (0,1)$ such that for all $h \in (0, h^*)$ and any solution $w \in L^2_{\mathcal{F}}(0,T;H_h^2(\mathcal{M}))$ of \eqref{eq:cauchy problem} with $w=\xi\; \text{on}\;\, \Gamma\times (0,T)$ satisfying
\[
\|w\|_{L^2_{\mathcal{F}}(0,T;H_h^2(\mathcal{M}))} \leq \mathbf{M},
\]
the following estimate holds
\begin{equation}\label{eq:teo:secondresult}
\|w\|_{L^2_{\mathcal{F}}(\varepsilon,T-\varepsilon;H_h^2(\mathcal{M}_0))} \leq \mathcal{C} \max{\left\{\|\Lambda_2(w)\|_{X_2},\mathbf{M}e^{-\mathcal{C}h^{-1}},\mathbf{M}^{\kappa}\|\Lambda_2(w)\|_{X_2}^{1-\kappa}\right\}}.
\end{equation}
\end{theorem}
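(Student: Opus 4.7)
The plan is to apply the nonhomogeneous-boundary Carleman estimate (the second of the two Carleman inequalities established earlier in the paper) to a suitable time cutoff of the solution, and then optimize the large Carleman parameter $s$ to trade the boundary observation $\|\Lambda_2(w)\|_{X_2}$ against the a priori bound $\mathbf{M}$; the fact that in the semidiscrete setting $s$ must satisfy a constraint of the form $sh\leq \delta_0$ is exactly what produces the three-term maximum on the right-hand side of \eqref{eq:teo:secondresult}.

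Concretely, I would first introduce a smooth cutoff $\chi\in C^\infty_c((0,T))$ with $\chi\equiv 1$ on $[\varepsilon, T-\varepsilon]$ and set $z=\chi w$. Then $z$ satisfies the same semidiscrete stochastic parabolic equation as $w$ with the additional drift term $\chi'(t)\,w\,dt$, the boundary datum $\chi\xi$, and the zero initial and terminal conditions $z(0)=z(T)=0$. This reduction eliminates the issue that $w(0)$ and $w(T)$ are not part of the measured data, at the price of a forcing $\chi' w$ supported on $(0,\varepsilon)\cup(T-\varepsilon,T)$.

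Next I would apply the nonhomogeneous-boundary Carleman estimate to $z$, with weight $e^{2s\varphi}$ where $\varphi(x,t)=\theta(t)e^{\lambda\psi(x)}$ for a function $\psi$ whose gradient is nondegenerate on $\mathcal{M}\setminus\mathcal{M}_0$ and that is constructed so that the pseudoconvexity required on $\Gamma^+$ holds. Setting $\varphi_0\triangleq \min_{\mathcal{M}_0\times[\varepsilon,T-\varepsilon]}\varphi$, $\varphi_1\triangleq \max_{t\in\mathrm{supp}\,\chi'}\varphi$, and $\varphi_2\triangleq \max\varphi$, and arranging that $\varphi_1<\varphi_0<\varphi_2$, the lower-order interior contributions $a_{1i}A_iD_iz$, $a_2z$ are absorbed by the dominant gradient and zero-order terms of the Carleman estimate for $s$ large; the Itô correction from $a_3z\,dB(t)$ is handled using $a_3\in L^\infty_{\mathcal{F}}(0,T;W_h^{1,\infty}(\mathcal{M}))$ along the lines of the homogeneous case. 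The commutator piece $\chi' w$ is supported where $\varphi\leq \varphi_1$ and so is bounded by $Ce^{2s\varphi_1}\mathbf{M}^2$, while the boundary contributions are bounded by $Ce^{2s\varphi_2}\|\Lambda_2(w)\|_{X_2}^2$. Restricting the left-hand side to $\mathcal{M}_0\times[\varepsilon,T-\varepsilon]$ (where $\chi\equiv 1$ and $\varphi\geq \varphi_0$) and upgrading from the $L^2$-bound on the left to the required $H^2_h(\mathcal{M}_0)$-bound by applying the Carleman inequality to the discrete gradient (using the $H^2$ a priori bound $\mathbf{M}$ on the transition) yields
\begin{equation*}
\|w\|_{L^2_{\mathcal{F}}(\varepsilon,T-\varepsilon;H_h^2(\mathcal{M}_0))}^2 \;\leq\; C\Bigl(e^{2s(\varphi_2-\varphi_0)}\|\Lambda_2(w)\|_{X_2}^2 \;+\; e^{-2s(\varphi_0-\varphi_1)}\mathbf{M}^2\Bigr),
\end{equation*}
valid for every $s\in[s_0,\delta_0/h]$.

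The proof then concludes by choosing $s$ to balance the two right-hand terms. The unconstrained optimizer is $s^\star=\tfrac{1}{\varphi_2-\varphi_1}\log(\mathbf{M}/\|\Lambda_2(w)\|_{X_2})$, which gives the Hölder interpolation $\mathbf{M}^\kappa\|\Lambda_2(w)\|_{X_2}^{1-\kappa}$ with $\kappa=(\varphi_2-\varphi_0)/(\varphi_2-\varphi_1)\in(0,1)$. If $s^\star>\delta_0/h$ I instead set $s=\delta_0/h$, producing the term $\mathbf{M}\,e^{-Ch^{-1}}$ for some $C$ proportional to $\delta_0(\varphi_0-\varphi_1)$; if $s^\star<s_0$ then $\|\Lambda_2(w)\|_{X_2}$ already dominates. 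Taking the maximum of the three regimes yields \eqref{eq:teo:secondresult}. The main obstacle I expect is the bookkeeping needed to upgrade the estimate from the $L_h^2$-norm to the full $H_h^2$-norm on $\mathcal{M}_0$ in a way that is compatible with the stochastic integration by parts and with the semidiscrete commutators between $D_i$ and the Carleman weight, since the latter only give $O(h)$-quasi-commutativity and must be absorbed using the constraint $sh\leq \delta_0$ without destroying the optimization above.
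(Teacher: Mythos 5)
Your overall skeleton---apply the nonhomogeneous-boundary Carleman estimate (Theorem \ref{theo: CarlemanNonhomgeneous.}) to a cutoff of $w$, bound the cutoff commutator by $\mathbf{M}$ and the boundary contributions by the observation, and optimize the Carleman parameter over the admissible range $s\lesssim h^{-1}$ to produce the three-term maximum---is the same as the paper's. But the cutoff you choose, a pure time cutoff $\chi\in C^\infty_c((0,T))$, creates a genuine gap. It does nothing to the lateral boundary: $z=\chi w$ still carries the Dirichlet datum $\chi\xi$ on all of $\partial\mathcal{M}\times(0,T)$, so the right-hand side of \eqref{eq:secondCarleman} contains the norm of $\chi\xi$ in $H^1_{\mathcal{F}}(0,T;H_h^1(\partial\mathcal{M}))$ over the \emph{whole} boundary. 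The Cauchy data $\Lambda_2(w)$ only involve $\Gamma^{+}$, so this term is not controlled by $\|\Lambda_2(w)\|_{X_2}$ and \eqref{eq:teo:secondresult} does not follow. The paper removes the unobserved part of the boundary by cutting off along level sets of the space-time weight $\Phi=e^{\lambda\psi-\lambda\theta}$, after the Yamamoto-type geometric construction ($G_1$, $\Psi$, thresholds $\mu_1<\mu_2<\mu_3<\mu_4$; see \eqref{eq:propertytwoweight} and \eqref{eq:definitionchi}): the function $v=\chi w$ and its differences $D_i v$ vanish on the interior level surface $\Sigma^{(2)}$, so the only boundary portion that survives is $\Sigma$, which is chosen to coincide with $\Gamma^{+}$; this is precisely what makes only $\Gamma^{+}$-norms appear in \eqref{eq:absorbthecarleman}.

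A second, related failure is that the separation $\varphi_1<\varphi_0$ on which your optimization rests cannot be arranged. Your commutator $\chi' w$ is supported on $\mathcal{M}\times\big((0,\varepsilon)\cup(T-\varepsilon,T)\big)$, i.e.\ on all of $\mathcal{M}$ (including $\mathcal{M}_0$) at times adjacent to $[\varepsilon,T-\varepsilon]$; since the weight is continuous in $t$, for any $x_0\in\mathcal{M}_0$ one has $\varphi_1\geq\lim_{t\uparrow\varepsilon}\varphi(x_0,t)=\varphi(x_0,\varepsilon)\geq\varphi_0$, hence $\varphi_1\geq\varphi_0$ and the term $e^{-2s(\varphi_0-\varphi_1)}\mathbf{M}^2$ never decays, so neither the H\"older exponent nor the $\mathbf{M}e^{-Ch^{-1}}$ term can be extracted. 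The paper obtains a true gap by localizing around a fixed $t_0$ through the temporal weight $\beta|t-t_0|^2$ and cutting along levels of $\Phi$: the commutator $\mathbf{F}$ is supported where $\Phi\leq\mu_3+\tfrac{1}{2}(\mu_4-\mu_3)$, while the target region $\mathcal{M}_0\times(t_0-\varepsilon/\sqrt{N},\,t_0+\varepsilon/\sqrt{N})$ lies in $\{\Phi>\mu_4\}$, giving the exponent $\mu_4-\mu_3>0$; the estimate on all of $(\varepsilon,T-\varepsilon)$ is then recovered by summing over finitely many choices $t_0=\sqrt{2}\varepsilon+j\varepsilon/\sqrt{N}$. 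A smaller point: you do not need to apply the Carleman inequality to the discrete gradient to reach the $H_h^2(\mathcal{M}_0)$ norm---the left-hand side of \eqref{eq:secondCarleman} already carries $\sum_{i,j}\E\int_{Q_{ij}^{\ast}}s^{-1}e^{2s\varphi}|D_{ij}^2 v|^2\,dt$, which the paper exploits directly in \eqref{eq:estimateoftheleftside}; your route would generate additional discrete commutators with the coefficients and the noise that the budget $sh\leq\delta_0$ would have to absorb a second time.
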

{ 
\begin{remark}\label{remark:nonuniquess} We observe that by refining the mesh size $h$ and the observation region such that $\|\Lambda_2(w)\|_{X_2}<\mathbf{M}$,  inequality \eqref{eq:teo:secondresult} can be rewritten as
\begin{equation}\label{ine:generalization}
   \|w\|_{L^2_{\mathcal{F}}(\varepsilon,T-\varepsilon;H_h^2(\mathcal{M}_0))} \leq \mathcal{C} \mathbf{M}^{\kappa}\|\Lambda_2(w)\|_{X_2}^{1-\kappa}.
\end{equation}
The above inequality stands for a generalization of \cite[Theorem 2.6]{WWW:2024}. Moreover, in \cite[Remark 2.11]{WWW:2024} the authors claim that \cite[Theorem 2.6]{WWW:2024} implies the uniqueness of this inverse problem. This latter conclusion is incorrect. Indeed, by examining the proof of that Theorem  and assuming that the measurements are zero, we see that in the unlabeled inequality preceding \cite[Inequality 5.16]{WWW:2024}, an exponential error term remains, which prevents one to concluding a uniqueness result. For this reason, in inequality \eqref{eq:teo:secondresult} appears the term $\mathbf{M}e^{-\mathcal{C}h^{-1}}$. There is another way to see that a uniqueness result is not possible. From \eqref{ine:generalization}, we obtain, up to a constant, $\mathcal{C}e^{-h^{-1}}\leq \|\Lambda_{2}(w)\|^{1-\kappa}_{X_{2}}$. Then, assuming zero measurement does not imply uniqueness in this inverse problem. This phenomenon is not new in discrete inverse problem: in the continuous case, that term vanishes when the Carleman parameter is taken sufficiently large, while in the discrete setting this is not allowed because the discrete mesh size and the Carleman parameter are connected, see Theorems \ref{theo:Carleman_datainterior} and \ref{theo:CarlemanDataboundary}. Moreover, similar results are obtained in several estimates for discrete systems; see for instance \cite{EDG:2011,LOPD:2023}.
 \end{remark}}
%--------------------------------------------------%
The two inverse problems under consideration are associated with stochastic parabolic equations. Stochastic partial differential equations (SPDEs) have broad applications across multiple disciplines. They model phenomena such as particle diffusion and heat propagation in physics, option pricing in finance, epidemic spread and population dynamics in biology, systems with uncertain parameters in engineering, and pollutant dispersion in environmental sciences. These equations incorporate randomness and uncertainty, making them particularly suitable for modeling inherently unpredictable phenomena. Consequently, stochastic PDEs often provide more realistic representations than their deterministic counterparts. Nevertheless, research on inverse problems for stochastic PDEs remains limited. For further details, see \cite[Chapter 5]{lu2021mathematical} and the references therein.

For the inverse source problem related to stochastic parabolic equations, \cite{QLu:2012} establishes uniqueness in determining the source function $f$ in the drift term. The inverse source problem of simultaneously determining two types of sources, $f$ (drift term) and $g$ (diffusion term), in a stochastic parabolic equation was studied in \cite{ref37} using observations at the final time and on the lateral boundary. A unique result for this inverse problem was obtained using a global Carleman estimate for the continuous stochastic parabolic equation. In addition, \cite{ref1} and \cite{ref2} discuss the application of regularization techniques within numerical methods to inverse random source problems.

The Cauchy problem aims to recover the solution using the observed data on the lateral boundary. In \cite{Yamamoto_2009}, conditional stability was established for the Cauchy problem of deterministic parabolic equations. The conditional stability and the convergence rate of the Tikhonov regularization method for the Cauchy problem of stochastic parabolic equations were obtained in \cite{ref13}. However, all of the above cited results were obtained within a continuous framework, as is the survey \cite{LZ:2024}. For semi-discrete inverse problems related to stochastic differential equations, see \cite{WWW:2024}, which studies the one-dimensional case of both issues, including a semi-discrete inverse random source problem and a semi-discrete Cauchy problem. 

This article builds upon the results of \cite{WWW:2024}, extending them to arbitrary spatial dimensions and providing several refinements. In particular, Theorem \ref{firstresulttheorem} provides integrability conditions for the coefficient $a_2$ that depend on the spatial dimension of the problem. Furthermore, Theorem \ref{Teo:SecondResult} establishes a stability estimate in which the right-hand side of inequality \eqref{eq:teo:secondresult} explicitly reveals the dependence on the mesh size $h$ and the size of the observation operator. As a consequence, the second result recovers the same estimate as in \cite{WWW:2024} when $h$ and the observation region are chosen sufficiently small. The main tools to address the previous inverse problems are two Carleman estimates. The first is for a homogeneous Dirichlet condition.
{ 
\begin{theorem}\label{theo:Carleman_datainterior}
Let $\psi$ satisfying assumptions \ref{Assumption_data_inteior} and $\varphi=e^{\lambda\psi}$. For $\lambda_1\geq 1$ sufficiently large, there exist $\mathcal{C}$, $\tau_{0}\geq 1$, $h_{0}>0$, $\varepsilon_0 >0$, depending on $G,\,G_0$, $\beta$, $c_0$, $T$, and $\lambda$, such that

\begin{equation}\label{eq:firstCarlemanV2}
\begin{split}
&J(w)+\E\int_{Q}s^{3}\lambda^{4}_{1}\varphi^{3}\,e^{2s\varphi}|w|^{2}\,dt+\sum_{i,j=1}^{n}\E\int_{Q_{ij}^{\ast}}s^{-1}e^{2s\varphi}|D_{ij}^{2}w|^2\, dt+\E\int_{Q}s\lambda_{1}^2\varphi\,e^{2s\varphi}|g|^{2}dt\\
&\leq\,\mathcal{C}_{\tau_{0},\varepsilon_0}\left(\E\int_{Q}\,e^{2s\varphi}|f|^2\,dt+\sum_{i=1}^{n}\E\int_{Q_{i}^{\ast}}\,e^{2s\varphi}|D_{i}g|^{2}dt+\int_{0}^{T}\int_{G_{0}\cap\mathcal{M}}s^{3}\varphi^{3}e^{2s\varphi}|w|^{2}\,dt\right.\\
&\left.+\left.\E\int_{\mathcal{M}}s^2\,e^{2s\varphi}|w|^{2}\right|_{t=T}\right),
\end{split}
\end{equation}
for all $\tau\geq \tau_{0}$, $0<h\leq h_0$, $s(t)h\leq \epsilon_0$, where $\displaystyle J(w):= \sum_{i=1}^{n}\E\int_{Q_{i}^{\ast}}s\lambda_1^2\varphi e^{2s\varphi}\,|D_{i}w|^{2}\,dt+\E\int_{Q}s\lambda_{1}^{2}\varphi e^{2s\varphi}|A_iD_iw|^{2}\,dt$, $f\in L^{2}_{\mathcal{F}}(0,T;L^{2}_{h}(\mathcal{M}))$, $g\in L^{2}_{\mathcal{F}}(0,T;H^{1}_{h}(\mathcal{M}))$, and $w$ satisfies $\displaystyle dw-\sum_{i=1}^{n} D_i(\gamma_i D_{i}w)\,dt=fdt+gdB(t)$ with $w=0$ on $\partial \mathcal{M}$ and $w|_{t=0}=0$ in $\mathcal{M}$.
\end{theorem}
}
The second is for the nonhomogeneous Dirichlet condition given in Theorem \ref{theo: CarlemanNonhomgeneous.}. { Theorem \ref{theo:CarlemanDataboundary} provides, to our knowledge, the first Carleman estimate for a stochastic forward semi-discrete parabolic operator with interior observation. Moreover, Theorems \ref{theo:Carleman_datainterior}--\ref{theo: CarlemanNonhomgeneous.} extend the one-dimensional Carleman estimates established in \cite{WWW:2024}. We adopt the general structure of the weight function introduced in that work, but modify its spatial component in order to treat more general configurations. This modification allows us to derive Carleman estimates corresponding to interior or boundary observations on arbitrary subsets of the corresponding domains.
}However, in the multidimensional setting, additional difficulties arise due to the needed to incorporate second-order terms $D^{2}_{ij}$ and the fact that the boundary is no longer a simple pair of points. To establish these estimates, we rely on fundamental discrete tools, such as integration by parts and the product rule, which we state below.

\begin{proposition}[{\cite[Lemma 2.2]{LOPD:2023}}]\label{prop:integralbyparts}
For any $v\in C(\mathcal{M}_{i}^{\ast})$ and $u\in C(\overline{\mathcal{M}}_{i})$, the following identities hold: for the difference operator
\begin{equation}\label{eq:int:dif}
    \int_{\mathcal{M}}u\,D_{i}v=-\int_{\mathcal{M}_{i}^{\ast}}v\,D_{i}u+\int_{\partial_{i}\mathcal{M}}u\,t_{r}^{i}(v)\nu_{i},
\end{equation}    
and for the average operator
\begin{equation}\label{eq:int:ave}
\int_{\mathcal{M}}u\,A_{i}v=\int_{\mathcal{M}_{i}^{\ast}}v\,A_{i}u-\frac{h}{2}\int_{\partial_{i} \mathcal{M}}u\,t_{r}^{i}(v).
\end{equation}
\end{proposition}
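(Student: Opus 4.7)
My plan is to prove both identities by direct computation. The approach is entirely algebraic: expand each discrete integral into its defining finite sum, substitute the definitions of $D_i$ and $A_i$, and then perform a discrete change of variables (an index shift $y = x \pm \tfrac{h}{2}e_i$) to convert a sum over $\mathcal{M}$ into a sum over $\mathcal{M}_i^{\ast}$. No analytic estimates are involved; the only subtlety is a careful accounting of the boundary contribution introduced by the index shift.

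For the difference identity \eqref{eq:int:dif}, I would first write
\[
\int_{\mathcal{M}} u\, D_i v \;=\; h^{n-1}\sum_{x\in\mathcal{M}} u(x)\bigl[v(x+\tfrac{h}{2}e_i) - v(x-\tfrac{h}{2}e_i)\bigr],
\]
split this into two sums, and relabel $y = x + \tfrac{h}{2}e_i$ in the first and $y = x - \tfrac{h}{2}e_i$ in the second, so that each becomes a sum over a subset of $\mathcal{M}_i^{\ast}$. The interior terms (those $y\in\mathcal{M}_i^{\ast}$ with both $y\pm\tfrac{h}{2}e_i\in\mathcal{M}$) recombine into $-h^{n-1}\sum_{y\in\mathcal{M}_i^{\ast}} v(y)[u(y+\tfrac{h}{2}e_i)-u(y-\tfrac{h}{2}e_i)]=-\int_{\mathcal{M}_i^{\ast}} v\, D_i u$. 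The ``orphan'' terms come from $y\in\mathcal{M}_i^{\ast}$ for which exactly one of $y\pm\tfrac{h}{2}e_i$ lies in $\mathcal{M}$; each such orphan corresponds bijectively via $x=y\mp\tfrac{h}{2}e_i$ to a point $x\in\partial_i\mathcal{M}$. A direct case check against the definitions of $\nu_i$ in \eqref{normal:function} and of $t_r^i$ matches these orphans exactly with $\int_{\partial_i\mathcal{M}} u\,t_r^i(v)\,\nu_i$, the sign $\nu_i=\pm 1$ encoding whether the surviving dual point sits to the left or to the right of the boundary point.

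The average identity \eqref{eq:int:ave} follows along the same lines, starting from
\[
\int_{\mathcal{M}} u\, A_i v \;=\; \tfrac{h^{n}}{2}\sum_{x\in\mathcal{M}} u(x)\bigl[v(x+\tfrac{h}{2}e_i) + v(x-\tfrac{h}{2}e_i)\bigr].
\]
The structural difference is that both summands in the numerator share a sign, so after the same reindexing the interior contributions combine into $+\int_{\mathcal{M}_i^{\ast}} v\, A_i u$ and the orphan contributions now appear with a common sign on both boundary configurations. This is why $\nu_i$ disappears from the boundary term and why the prefactor $h/2$ (the $1/2$ from $A_i$, together with the passage from the volume weight $h^n$ to the boundary weight $h^{n-1}$) appears in front of $\int_{\partial_i\mathcal{M}} u\,t_r^i(v)$.

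The only real obstacle is the boundary bookkeeping: one has to verify that the index shift produces exactly one orphan per point of $\partial_i\mathcal{M}$, and that the case distinction in \eqref{normal:function} assigns the correct sign ($+1$, $-1$, or $0$) to each local configuration at the boundary. Once that table is written down, both identities drop out by directly comparing the finite sums term by term; in effect, the proposition is the discrete version of the identity $\partial_i(uv)=(\partial_i u)v+u(\partial_i v)$ (respectively its averaging analogue), so summing the product rule by parts over the mesh and isolating the telescoping boundary terms is the whole content of the argument.
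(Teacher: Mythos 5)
The paper itself offers no proof of this proposition (it is imported verbatim from Lemma 2.2 of \cite{LOPD}), and your expand--reindex computation is exactly the standard argument for it: the expansions of the discrete integrals, the shift $y=x\pm\tfrac{h}{2}e_i$, and the sign/factor bookkeeping (why $\nu_i$ survives only in the difference identity, and how the $\tfrac{h}{2}$ arises from the $\tfrac12$ in $A_i$ together with the $h^{n}\to h^{n-1}$ weight change) are all correct for the mesh at hand. One small precision for your case table: the dual-mesh sum $\int_{\mathcal{M}_i^{\ast}} v\,D_i u$ also runs over the ``orphan'' points, so the clean split is: interior points recombine as you say, while each orphan contributes the missing half of $v\,D_iu$ at that point plus exactly the trace term $u\,t_r^i(v)\,\nu_i$ at the adjacent boundary node --- writing the orphan bookkeeping this way makes the identity drop out term by term.
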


Next, we recall the discrete product rule, which relates the average and difference operators.

\begin{proposition}[{\cite[Lemma 2.1 and 2.2]{BHLR:2010b}}]\label{pro:product}
For $u,v\in C(\overline{\mathcal{M}})$, the following identities hold in $\mathcal{M}^{\ast}_{i}$. For the difference operator
\begin{equation}\label{eq:difference:product}
    D_{i}(u\,v)=D_{i}u\, A_{i}v+A_{i}u\,D_{i}v,
\end{equation}
and for the average operator 
\begin{equation}\label{eq:average:product}
     A_{i}(u\,v)= A_{i}u\,A_{i}v+\frac{h^{2}}{4}D_{i}u\,D_{i}v.
\end{equation}
Moreover, on ${\mathcal{M}}$ we have
\begin{equation}\label{eq:averengeanddifference}
    u=A^2_i u-\frac{h^2}{4}D^2_iu.
\end{equation}
\end{proposition}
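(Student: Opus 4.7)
The plan is purely computational: each identity is local (a single algebraic equality at each node), and is established by unpacking the definitions of $D_i$ and $A_i$ and verifying the resulting identity. Fix $x \in \mathcal{M}_i^{\ast}$ and adopt the shorthand $u_{\pm} \triangleq u(x \pm \tfrac{h}{2}e_i)$, $v_{\pm} \triangleq v(x \pm \tfrac{h}{2}e_i)$, so that
\[
D_i u(x) = \frac{u_+ - u_-}{h}, \quad A_i u(x) = \frac{u_+ + u_-}{2}, \quad D_i(uv)(x) = \frac{u_+v_+ - u_-v_-}{h}, \quad A_i(uv)(x) = \frac{u_+v_+ + u_-v_-}{2}.
\]

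For (\ref{eq:difference:product}) I would simply expand
\[
D_i u\, A_i v + A_i u\, D_i v = \frac{(u_+ - u_-)(v_+ + v_-) + (u_+ + u_-)(v_+ - v_-)}{2h},
\]
and observe that the cross terms $u_+ v_-$ and $u_- v_+$ cancel, leaving $(u_+v_+ - u_-v_-)/h = D_i(uv)$. For (\ref{eq:average:product}) the analogous expansion
\[
A_iu\,A_iv + \tfrac{h^2}{4}D_iu\,D_iv = \tfrac{1}{4}\bigl[(u_++u_-)(v_++v_-) + (u_+-u_-)(v_+-v_-)\bigr]
\]
has its mixed terms cancel, yielding $(u_+v_+ + u_-v_-)/2 = A_i(uv)$.

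For (\ref{eq:averengeanddifference}) I would iterate the definitions at $x \in \mathcal{M}$, using that $D_i$ and $A_i$ both send functions on $\overline{\mathcal{M}}$ to functions on $\mathcal{M}_i^{\ast}$, so that $A_i^2$ and $D_i^2$ return values on $\mathcal{M}$. A direct two-step computation produces
\[
A_i^2 u(x) = \tfrac{1}{4}\bigl(u(x+he_i) + 2u(x) + u(x-he_i)\bigr), \quad D_i^2 u(x) = \tfrac{1}{h^2}\bigl(u(x+he_i) - 2u(x) + u(x-he_i)\bigr),
\]
so that subtracting $\tfrac{h^2}{4}D_i^2 u(x)$ from $A_i^2 u(x)$ cancels the two endpoint contributions and leaves exactly $u(x)$. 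There is no substantive obstacle here; the only bookkeeping is keeping track of which mesh each operator acts on so that the compositions $A_i^2$ and $D_i^2$ are well-defined and land back on $\mathcal{M}$. Since the statement is quoted verbatim from \cite[Lemmas 2.1 and 2.2]{boyer-2010-elliptic}, the calculation above is the entire content of the proof and one may equally well invoke the reference directly.
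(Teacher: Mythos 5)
Your computational verification is correct: all three identities follow exactly as you show by expanding the definitions of $D_i$ and $A_i$ at a point of $\mathcal{M}_i^{\ast}$ (respectively $\mathcal{M}$ for \eqref{eq:averengeanddifference}), and your bookkeeping of which mesh each composition lands on is accurate. The paper itself offers no proof, simply citing \cite[Lemmas 2.1 and 2.2]{boyer-2010-elliptic}, and your direct algebraic check is precisely the content of that reference, so there is nothing to add.
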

The remainder of this paper is organized as follows. In Section \ref{sec:RandomSource}, we prove a Lipschitz stability result of the spatial semi-discrete inverse random source problem applying the result of the Carleman estimate \eqref{eq:firstCarlemanV2} where its proof is presented in the Appendix \ref{Proof:CarlemanEstimateHomegeneuos}. Finally, in Section \ref{sec:cauchyproblem}, we study the spatial semi-discrete inverse Cauchy problem and establish a Hölder stability result based on a second Carleman estimate and a geometric argument.

%%%%%%%%%%%%%%%%%%%%
\section{Semi-discrete inverse random source problem}\label{sec:RandomSource}
In this section, we establish the stability of the semi-discrete inverse random source problem (Theorem \ref{firstresulttheorem}). 
The key tool for proving this result is a semi-discrete Carleman estimate, whose proof is deferred to {  Appendix \ref{Proof:CarlemanEstimateHomegeneuos}}. To formulate this estimate, { 
the weight function is defined as $r=e^{s\varphi}$ with $\varphi=e^{\lambda \psi(x)}$, for $\lambda>1$ and with $\psi$ satisfies the following assumption. The construction of this weight function is classic (see  e.g \cite{Emanuilov:1995}) and the existence of this function in the semi-discrete setting is given in \cite[Appendix A]{BHLR:2010b}. 
\begin{assumption}\label{Assumption_data_inteior}
    Let $\tilde{G}_0\subsetneq G_0$ be an open set. Let $\tilde{G}$ be a smooth open and connected neighborhood of $\overline{G}$ in $\mathbb{R}^n$. The function $x\mapsto \psi(x)$ in $C^k(\overline{\tilde{G}},\mathbb{R})$, $k$ sufficiently large, and satisfies, for some $c>0$,
\begin{equation*}
\begin{array}{ccc}
    \psi>0\; \text{in}\; \tilde{G},\; \psi=0\; \text{on} \;\partial\tilde{G}, & |\nabla\psi|\geq c\;\; \text{in}\; \tilde{G}\setminus \tilde{G}_0, & \text{and}\; \partial_{\nu_i}\psi(x)\leq -c<0,\;\text{for}\,\; x\in V_{\partial_i G}, 
\end{array}
\end{equation*}
where $V_{\partial_iG}$ is a suffciently small neighborhood of $\partial_iG$ in $\tilde{G}$, in which the outward unit normal $\nu_i$ to $G$ is extended from $\partial_i G$. 
\end{assumption}
On the other hand, for the temporal weight function, we define it as:} for $t_0\in (0,T)$ and $\beta>0$,
\begin{equation}\label{theta-delta}
    \theta(t)=\beta|t-t_0|^2.
\end{equation}
Given $\tau\geq 1$, we set $
s(t)=\tau e^{-\lambda \theta(t)}.
$
\begin{remark}
    Compared to \cite{LPP:2025} and other works on parabolic operators, the weight function here has been modified, particularly in its temporal component; however, the structure $s(t)\varphi(x)$ is preserved. Moreover, the functions satisfy $|\theta(t)| \leq C$ and $|\theta_t(t)| \leq C$ for some constant $C>0$ and for all $t \in [0,T]$. As a result, the estimates in \cite{BHLR:2010b,AA:perez:2024} holds, and most of the cross-product terms can be treated in a manner analogous to \cite{LPP:2025}, slightly simplifying the computations for the new Carleman estimate in this article.
\end{remark}

%--------------------------------------------------------------------------------------------------------------------------------------------------------------------------------------------------------------------------------------------------
%%%%%%%%%%%%%%%%%%%%%%%%%%%%%%%%%%%%%%%%%%%%%%%%%%%%%%%%%%%%%%%%%%%%%%%%%%%%%%%%%%%%%%%%%%%%%%%%%%%%%%%%%%%%%%%%%%%%%%%%%%%%%%%%%%%%%%%%%
\subsection{Proof of Theorem \ref{firstresulttheorem}.}\label{Proof:firstresult}

Let $w_{1}$ and $w_{2}$ be the solutions of \eqref{systemrandomsource} with the random forcing term $g_{1}$ and $g_{2}$, respectively. Setting $\tilde{w}=w_{1}-w_{2}$ and $\tilde{g}=g_{1}-g_{2}$, we obtain
\begin{equation*}
    \begin{cases}
     d\tilde{w}-\sum_{i=1}^{n}D_i(\gamma_{i}D_i\tilde{w})=\left({ \sum_{i=1}^{n}}a_{1i}A_iD_i(\tilde{w})+a_2\tilde{w}\right)\,dt+\tilde{g}\,dB(t),    &\text{in}\,Q,  \\
 \tilde{w}=0,\quad\text{on}\, \partial Q,\quad\quad\quad\left.\tilde{w}\right|_{t=0}=0,\quad \text{in}\,\mathcal{M}.
    \end{cases}
\end{equation*}
Applying Theorem \ref{theo:Carleman_datainterior} to $\tilde{w}$,  we derive the following estimate
{ 
\begin{equation*}
\begin{split}
&J(\tilde{w})+\E\int_{Q}s^{3}\lambda^{4}_{1}\varphi^{3}\,e^{2s\varphi}|\tilde{w}|^{2}\,dt+\sum_{i,j=1}^{n}\E\int_{Q_{ij}^{\ast}}s^{-1}e^{2s\varphi}|D_{ij}^2\tilde{w}|^2\, dt+\E\int_{Q}s\lambda_{1}^2\varphi\,e^{2s\varphi}|\tilde{g}|^2dt\\
&\leq\,\mathcal{C}_{\tau_{0},\varepsilon_0}\left(\E\int_{Q}\,e^{2s\varphi}\left|\sum_{i=1}^{n}a_{1i}A_iD_i\tilde{w}+a_2\tilde{w}\right|^2\,dt+\sum_{i=1}^{n}\E\int_{Q_i^{\ast}}\,e^{2s\varphi}|D_i\tilde{g}|^2dt\right.\\
&\left.+\int_{0}^T\int_{G_0\cap\mathcal{M}}s^3\varphi^3e^{2s\varphi}|\tilde{w}|^2\,dt+\left.\E\int_{\mathcal{M}}s^2\,e^{2s\varphi}|\tilde{w}|^2\right|_{t=T}\right).
\end{split}
\end{equation*}
}
By applying Young's inequality for the first integral and using \eqref{conditionong} for the second term on the right-hand side of the previous inequality, we can rewrite the inequality above as
\begin{equation}\label{eq:inequalityEC}
\begin{split}
&J(\tilde{w}) + \E\int_{Q}s^{3}\lambda^{4}_{1}\varphi^{3}\,e^{2s\varphi}|\tilde{w}|^{2}\,dt + \sum_{i,j=1}^{n}\E\int_{Q_{ij}^\ast}s^{-1}e^{2s\varphi}|D_{ij}^2\tilde{w}|^2\, dt + \E\int_{Q}s\lambda_{1}^2\varphi\,e^{2s\varphi}|\tilde{g}|^2\,dt \\
&\leq \mathcal{C}_{\tau_0,\varepsilon_0}\left(\sum_{i=1}^{n}\E\int_{Q}\,e^{2s\varphi}|a_{1i}A_iD_i(\tilde{w})|^2\,dt + \E\int_{Q}\,e^{2s\varphi}|a_2\tilde{w}|^2\,dt + \sum_{i=1}^{n}\E\int_{Q_i^{\ast}}\,e^{2s\varphi}|A_i\tilde{g}|^2\,dt \right. \\
&\left. + \sum_{i=1}^{n}\E\int_0^T\int_{\Gamma}st_{r}^{i}(e^{2s\varphi}|D_i\tilde{w}|^2)\,dt + \left.\E\int_{\mathcal{M}}s^2\,e^{2s\varphi}|\tilde{w}|^2\right|_{t=T}\right).
\end{split}
\end{equation}

Our next objective is to estimate the second term on the right-hand side of the above equation, which we consider as $I$. 

By applying Hölder's inequality with $\frac{1}{r}+\frac{1}{q}=1$ to the second term on the right-hand side, we obtain
\[
I\leq \E\int_{0}^T\left(\int_{\mathcal{M}}|a_{2}|^{2r}\,\right)^{1/r}\left(\int_{\mathcal{M}}|e^{s\varphi}\tilde{w}|^{2q}\right)^{1/q}dt.
\]
Applying { \cite[Theorem 1.5]{LPP:2025} } to the second term from the right-hand side above  with $p=2$ and $p^{\ast}=2q$, we obtain the following.
\[
I\leq \mathcal{C}\E\int_{0}^T\left(\int_{\mathcal{M}}|a_{2}|^{2r}\,\right)^{2/2r}\left(\int_{\mathcal{M}}|e^{s\varphi}\tilde{w}|^{2}+\sum_{i=1}^{n}\int_{\mathcal{M}_i^{\ast}}|D_i(e^{s\varphi}\tilde{w})|^2\right)dt,
\]
where $\mathcal{C}$ is a constant independent of $h$ and if $n>2$ thus $2r=n$ or if $n=2$ thus $ 2r\in (2,+\infty)$. Taking into account \eqref{eq:difference:product} and the triangle inequality, from the above inequality, we deduce
\begin{align*}
I\leq \mathcal{C}\|a_{2}\|^2_{L^{\infty}_{\mathcal{F}}(0,T;L_{h}^{2r}(\mathcal{M}))}\E\int_0^T\left(\int_{\mathcal{M}}|e^{s\varphi}\tilde{w}|^{2}+\sum_{i=1}^{n}\int_{\mathcal{M}_{i}^\ast}|D_ie^{s\varphi}|^2|A_i\tilde{w}|^2+|A_ie^{s\varphi}|^2|D_i\tilde{w}|^2\right)dt.
\end{align*}
Notice that thanks to \eqref{eq:average:product}, for $i=1,\ldots,n$ and $u\in C(\mathcal{M})$ we obtain the following inequality.
\begin{equation}\label{eq:inequalityAverange}
    A_i(|u|^2)\geq |A_iu|^2.
\end{equation}
Thus, from the above inequality, \eqref{eq:int:ave}, and using the boundary condition $\tilde{w}=0$ on $\partial \mathcal{M}$, it follows that
\[
\int_{\mathcal{M}_{i}^\ast}|D_ie^{s\varphi}|^2|A_i\tilde{w}|^2\leq \int_{\mathcal{M}_i^{\ast}}|D_ie^{s\varphi}|^2A_i(|\tilde{w}|^2)= \int_{\mathcal{M}}A_i(|D_ie^{s\varphi}|^2)|\tilde{w}|^2
\]
Therefore, we can assert that 
\begin{align*}
I\leq \mathcal{C}\|a_{2}\|^2_{L^{\infty}_{\mathcal{F}}(0,T;L_{h}^{2r}(\mathcal{M}))}\E\int_0^T\left(\int_{\mathcal{M}}\left(e^{2s\varphi}+\sum_{i=1}^{n}A_i(|D_ie^{s\varphi}|^2)\right)\right.&|\tilde{w}|^{2}\\
+\sum_{i=1}^{n}&\left.\int_{\mathcal{M}_i^{\ast}}|A_ie^{s\varphi}|^2|D_i\tilde{w}|^2\right)dt.
\end{align*}

Finally, taking $n^*\ge2r$ and using the bounds $\displaystyle A_i(|D_i(e^{s\varphi})|^2)\leq \lambda^2s^2\varphi^2(\partial_i\psi)^2e^{2s\varphi}+e^{2s\varphi}\mathcal{O}_{\lambda}((sh)^2)$ and $|A_i(e^{s\varphi})|^2\leq e^{2s\varphi}+e^{2s\varphi}\mathcal{O}_{\lambda}((sh)^2)$, we conclude that
\begin{equation}\label{eq:conditionn*}
    \begin{split}
I\leq \mathcal{C}\|a_{2}\|^2_{L^{\infty}_{\mathcal{F}}(0,T;L_{h}^{n^{\ast}}(\mathcal{M}))}\E\int_0^T&\left(\int_{\mathcal{M}}\left(1+\lambda^2s^2\varphi^2|\nabla\psi|^2\right)e^{2s\varphi}|\tilde{w}|^{2}+\sum_{i=1}^{n}\int_{\mathcal{M}_i^{\ast}}e^{2s\varphi}|D_i\tilde{w}|^2\right)\,dt\\
&+\mathcal{O}_{\lambda}((sh)^2)\E\int_{0}^T\left(\int_{\mathcal{M}}e^{2s\varphi}|\tilde{w}|^2+\sum_{i=1}^{n}\int_{\mathcal{M}_i^{\ast}}e^{2s\varphi}|D_i\tilde{w}|^2\right)\,dt,
    \end{split}
\end{equation}
where we check at once that $n^\ast$ satisfies condition \eqref{eq:conditionn*}.

On the other hand, let us focus on the term $\displaystyle \E\int_{Q_i^{\ast}}|A_i\tilde{g}|^2\,dt$
in \eqref{eq:inequalityEC}. Applying \eqref{eq:inequalityAverange} and performing integration by parts with respect to the averaging operator \eqref{eq:int:ave}, we obtain the following.
\begin{equation}\label{eq1:Ag}
    \E\int_{Q_i^{\ast}}\,e^{2s\varphi}|A_i\tilde{g}|^2\,dt \leq \E\int_{Q_i^{\ast}}e^{2s\varphi}A_i(|\tilde{g}|^2)\,dt = \E\int_{Q}A_i(e^{2s\varphi})|\tilde{g}|^2\,dt + \frac{h}{2}\E\int_{\partial_i Q}t_r^{i}(e^{2s\varphi})|\tilde{g}|^2\,dt.
\end{equation}
To deal with the last term in the above inequality, consider $x_i=e_{i}\cdot x$, we can assert that
\[
\begin{aligned}
\int_{\mathcal{M}}(2x_i-1)D_i(e^{2s\varphi})|\tilde{g}|^2=-\int_{\mathcal{M}_i^{*}}2e^{2s\varphi}A_i(|\tilde{g}|^2)-\int_{\mathcal{M}_i^*}A_{i}(2x_i-1)e^{2s\varphi}D_i(|\tilde{g}|^2)\\+\int_{\partial_i\mathcal{M}}(2x_{i}-1)\nu_{i}t_{r}^{i}(e^{2s\varphi})|\tilde{g}|^2.
\end{aligned}
\]
Now, using that $D_i(|\tilde{g}|^2)=2A_i(\tilde{g})D_i(\tilde{g})$ on second term in the right-hand side above and Young inequality on the this result, we see that
\[
\begin{aligned}
 \int_{\partial_i\mathcal{M}}(2x_{i}-1)\nu_{i}&t_{r}^{i}(e^{2s\varphi})|\tilde{g}|^2\leq \int_{\mathcal{M}}(2x_i-1)D_i(e^{2s\varphi})|\tilde{g}|^2+\int_{\mathcal{M}_i^{\ast}}2e^{2s\varphi}A_i(|\tilde{g}|^2)\\
 &+\int_{\mathcal{M}_i^{*}}A_i(2x_i-1)e^{2s\varphi}|A_i\tilde{g}|^2+\int_{\mathcal{M}_i}A_i(2x_i-1)e^{2s\varphi}|D_i\tilde{g}|^2.
\end{aligned}
\]
Applying again the condition \eqref{conditionong} and the fact $|A_i\tilde{g}|^2\leq A_i(|\tilde{g}|^2)$, we conclude that
\[
\begin{aligned}
    \int_{\partial_i\mathcal{M}}(2x_{i}-1)\nu_{i}t_{r}^{i}(e^{2s\varphi})|\tilde{g}|^2\leq &\int_{\mathcal{M}}(2x_i-1)D_i(e^{2s\varphi})|\tilde{g}|^2\\
    &+2\int_{\mathcal{M}_i^{\ast}}(1+A_i(2x_i-1))e^{2s\varphi}A_i(|\tilde{g}|^2).
\end{aligned}
\]
Moreover, using that $|1+A_i(2x_i-1)|\leq 2$ for all $x_i$ and integration by parts with respect to average operator, it follows that
\[
\int_{\partial_i\mathcal{M}}(2x_{i}-1)\nu_{i}t_{r}^{i}(e^{2s\varphi})|\tilde{g}|^2\leq \int_{\mathcal{M}}(2x_i-1)D_i(e^{2s\varphi})|\tilde{g}|^2+4\int_{\mathcal{M}}A_i(e^{2s\varphi})|\tilde{g}|^2+2h\int_{\partial_i\mathcal{M}}t_{r}^{i}(e^{2s\varphi})|\tilde{g}|^2
\]
Taking $h_1\leq \frac{1}{4}$ and noting that $(2x_i-1)\nu_i=1$ for $x_i\in \partial_i\mathcal{M}$, we have
\[
\frac{1}{2}\int_{\partial_i\mathcal{M}}t_{r}^{i}(e^{2s\varphi})|\tilde{g}|^2\leq\int_{\mathcal{M}}(2x_i-1)D_i(e^{2s\varphi})|\tilde{g}|^2+4\int_{\mathcal{M}}A_i(e^{2s\varphi})|\tilde{g}|^2
\]
for all $0<h<h_1$.

Combining the above equation and \eqref{eq1:Ag} with $e^{-2s\varphi}D_i(e^{2s\varphi})=s\lambda\varphi\partial_i\psi+\mathcal{O}_{\lambda}((sh)^2)$ and $e^{-2s\varphi}A_i(e^{2s\varphi}) = 1 + \mathcal{O}_{\lambda}((sh)^2)$, due to Proposition 2.4 in \cite{AA:perez:2024}, respectively, we obtain
\begin{equation*}
\E\int_{Q_i^{\ast}}\,e^{2s\varphi}|A_i\tilde{g}|^2\,dt \leq \E\int_{Q}\left(1+\mathcal{O}_{\lambda}(sh)+\mathcal{O}_{\lambda}(h) + \mathcal{O}_{\lambda}((sh)^2)\right)e^{2s\varphi}|\tilde{g}|^2\,dt.
\end{equation*}
for all $0<h<\min\{h_0,h_{1}\}$.

Finally, recalling that $s(t):=\tau\theta(t)$, using the above inequality in the third term on the right-hand side of \eqref{eq:inequalityEC}, the estimate of $I$ and letting $\tau \geq C$, we can absorb the initial three terms on the right-hand side of the same inequality \eqref{eq:inequalityEC}. Therefore, we can achieve the following inequality,
{ 
\begin{equation*}
\begin{split}
&J(\tilde{w}) + \E\int_{Q}s^{3}\lambda^{4}_{1}\varphi^{3}\,e^{2s\varphi}|\tilde{w}|^{2}\,dt +{ \sum_{i,j=1}^{n}}\E\int_{Q_{ij}^{\ast}}s^{-1}e^{2s\varphi}|D_{ij}^2\tilde{w}|^2\, dt + \E\int_{Q}s\lambda_{1}^2\varphi\,e^{2s\varphi}|\tilde{g}|^2\,dt \\
&\leq \mathcal{C}\left(\int_{0}^T\int_{G_0\cap\mathcal{M}}s^3\varphi e^{2s\varphi}|\tilde{w}|\,dt + \left.\E\int_{\mathcal{M}}s^2\,e^{2s\varphi}|\tilde{w}|^2\right|_{t=T}\right).
\end{split}
\end{equation*}
}
After dropping the positive terms, from the left-hand side of above inequality and recalling the definition of the operator $\Lambda_1$, we deduce \eqref{eq:firstresult} and the proof of the theorem is completed.

%----------------------------------------------------------------------------------------------------------------------------
\section{Semi-discrete Cauchy problem.}\label{sec:cauchyproblem}

In this section, we establish the conditional stability result for the semi-discrete Cauchy problem, as stated in Theorem~\ref{Teo:SecondResult}. The section is structured as follows: { First, we introduce a new weight function in the semi-discrete context to derive a new semi-discrete Carleman estimate with boundary data, using a similar procedure for obtaining \eqref{theo:Carleman_datainterior}. Second, we need an auxiliary estimate for the solution of a semi-discrete stochastic parabolic equation with non-homogeneous Dirichlet boundary conditions. This estimate plays a crucial role in the proof a new Carleman estimate \eqref{eq:firstCarlemanV2} to this setting. Second, we present a modified Carleman estimate adapted to the semi-discrete stochastic parabolic operator with nonhomogeneous boundary data, and provide its proof. Finally, we prove Theorem~\ref{Teo:SecondResult} by combining the Carleman estimate obtained in the second step with geometric arguments inspired by \cite{Yamamoto_2009}.

We begin by deriving the new weight function required for the semi-discrete Carleman estimate with boundary data. Following the strategy used in \cite{BHLR:2010b} to obtain Assumption~\ref{Assumption_data_inteior}, and adapting the continuous construction of \cite[Lemma~1.2]{Emanuilov:1995} to the semi-discrete framework, we obtain the existence of such a function as follows:
\begin{assumption}\label{WFdataboundary}
Let $\tilde{\Gamma}\subsetneq\Gamma$ be an open set. Let $\tilde{G}$ be a smooth open and connected neighborhood of $\overline{G}$ in $\mathbb{R}^n$. The function $x\mapsto d_0(x)$ in $C^{k}(\overline{\tilde{G}},\mathbb{R})$, $k$ is sufficiently large and satisfies
\begin{equation*}
    \begin{array}{ccc}
         d_0(x)>0\quad \text{in}\,\, \tilde{G},&\quad |\nabla d_0(x)|>c\quad \text{in}\,\,x\in  \overline{\tilde{G}}, & \sum_{i=1}^{n} \partial_id_0(x) \nu_{i}(x)\leq 0, \quad x\in  V_{\partial_iG\setminus\Gamma},    
    \end{array}
\end{equation*}
where $V_{\partial_iG\setminus \Gamma}$ is a sufficiently small neighborhood of $\partial_iG\setminus \Gamma$ in $\tilde{G}\setminus \tilde{\Gamma}$, in which the outward normal unit $\nu_i$ to $G$ extends from $\partial_i G$.
\end{assumption}
\begin{example} Let $ x^{*} \notin \overline{G} $. We define the set $\mathcal{M}$ and  
\[
\Gamma := \left\{ x \in \partial \mathcal{M} : x \in \partial_i \mathcal{M} \ \text{and}\ (x - x^{*}) \cdot e_i\,\nu_i > 0 \right\}.
\]
For these sets, a function that satisfies the assumption \ref{WFdataboundary} is $d_0(x) = |x - x^{*}|^2.$

In this construction, the size of the observation set $\Gamma$ depends on the choice of the reference point $x^{*}$. Indeed, in dimension $ 2 $, let us consider a point $ x^{*} = (a,b) $ with $ a < 0$  and $ 0 < b < 1 $. In this case, if we consider $L:=(0,1)\cap h\mathbb{Z}$, then $\Gamma= \{(x,0)|\,x\in L\} \cup \{(1,x)|\, x\in L \} \cup \{(x,1)|\,x\in L\}.$ If instead $b>1$, then $\Gamma= \{(x,0)|\,x\in L\} \cup \{(1,x)|\,x\in L\},$ while for  $b<0$ we have $\Gamma= \{(1,x)|\,x\in L\} \cup \{(x,1)|\,x\in L\}.
$ Therefore, the set $\Gamma$ contains more faces of the square when  $0< b < 1$  than when $b > 1$ or $b < 0$. In general, if $x^{*}$ is chosen so that there exists some $x \in \partial \mathcal{M}$ for which the vector $x - x^{*}$ is orthogonal to a face of $\partial \mathcal{M}$, then the corresponding portion of $\Gamma$ becomes larger. In the opposite case, the set $\Gamma$ becomes smaller. Thus, a suitable choice of $x^{*}$ can be used to minimize the required observation region.
\end{example}

The following result may be proved in much the same way as Theorem \ref{theo:Carleman_datainterior}.

\begin{theorem}\label{theo:CarlemanDataboundary}
Let $d_0$ satisfy assumptions \ref{WFdataboundary} and $\varphi=e^{\lambda d_0}$. For $\lambda_1\geq 1$ sufficiently large, there exist $C$, $\tau_{0}\geq 1$, $h_{0}>0$, $\varepsilon_0 >0$, depending on $G$, $\beta$, $c_{0}$, $T$, and $\lambda$, such that 
\begin{equation}\label{eq:firstCarleman}
\begin{split}
&J(w)+\E\int_{Q}s^{3}\lambda^{4}_{1}\varphi^{3}\,e^{2s\varphi}|w|^{2}\,dt+\sum_{i,j=1}^{n}\E\int_{Q_{ij}^{\ast}}s^{-1}e^{2s\varphi}|D_{ij}^2w|^2\, dt+\E\int_{Q}s\lambda_{1}^2\varphi\,e^{2s\varphi}|g|^2dt\\
&\leq\,\mathcal{C}_{\tau_{0},\varepsilon_0}\left(\E\int_{Q}\,e^{2s\varphi}|f|^2\,dt+\sum_{i=1}^{n}\E\int_{Q_i^{\ast}}\,e^{2s\varphi}|D_{i}g|^2dt+\sum_{i=1}^{n}\E\int_0^T\int_{\Gamma}st_{r}^{i}(e^{2s\varphi}|D_iw|^2)\,dt\right.\\
&\left.+\left.\E\int_{\mathcal{M}}s^2\,e^{2s\varphi}|w|^2\right|_{t=T}\right).
\end{split}
\end{equation}
for all $\tau\geq \tau_0$, $0<h\leq h_0$, $s(t)h\leq \epsilon_0$, where $f\in L^{2}_{\mathcal{F}}(0,T;L^2(\mathcal{M}))$, $g\in L^{2}_{\mathcal{F}}(0,T;H^1(\mathcal{M}))$, and $w$ satisfy $\displaystyle dw-\sum_{i=1}^{n} D_i(\gamma_i D_{i}w)\,dt=fdt+gdB(t)$ with $w=0$ on $\partial \mathcal{M}$ and $w|_{t=0}=0$ in $\mathcal{M}$.
\end{theorem}
\begin{proof}
   We only outline the main ideas of the proof. If we replace $ \psi(x) $ by $ d_0(x)$ given in assumption \eqref{WFdataboundary} in Appendix~\ref{Proof:CarlemanEstimateHomegeneuos}, all computations remain unchanged, except for the estimate in \eqref{eq:finalestimateinz2}, due to the different properties of the new weight function. Thus, in \eqref{eq:finalestimateinz2}, the terms supported in $ G_0 \cap \mathcal{M} $ are replaced by boundary trace terms of the derivatives on a portion of the boundary. Indeed, for all $\tau\geq \tau_{1}>1$, $0<h<h_0$, and $s(t)h\le\varepsilon_1$, we obtain
\begin{equation}\label{eq1:carlemandatainterior}
\begin{aligned}
    \E\int_{Q}& s^{3}\lambda^{4}_{1}\varphi^{3}|z|^{2}\,dt+\E\int_{Q}(s^2\lambda_{1}^2\varphi^2+s\lambda_{1}^2\varphi)|dz|^2+\E\int_{Q}|Cz|^2\,dt\\
&+\sum_{i=1}^{n}\left(\E\int_{Q_{i}^{\ast}}s\lambda^{2}_{1}\varphi\,|D_{i}z|^{2}\,dt+\E\int_{Q} s\lambda_{1}^2\varphi|A_iD_iz|^2\,dt-\E\int_{Q_i^{\ast}}|D_i(dz)|^2\right)\\
&\leq\mathcal{C}_{\tau_{0},\varepsilon_0}\left(\E\int_{Q}|rf|^2\,dt+\sum_{i=1}^{n}\E\int_{0}^{T}\int_{\Gamma}st_{r}^{i}(|D_iz|^2)\,dt+\left.\E\int_{\mathcal{M}}s^2|z|^2\right|_{t=T}\right),
\end{aligned}
\end{equation}
where $z=e^{-s\varphi}w$, and $w$ satisfies $\displaystyle
dw-\sum_{i=1}^{n}D_i(\gamma_iD_iw)\,dt=f\,dt+g\,dB(t).$

We now return to the original variable $w$. The argument used in Section~\ref{sec:returninbackvarible} applies with minor modifications. The only change is in the right-hand side of \eqref{eq1:carlemandatainterior}, which must be adapted accordingly; the details are given in Appendix~\ref{appendix:additionalreturn}. This yields the Carleman estimate with boundary data \eqref{eq:firstCarleman}.
\end{proof}

From \eqref{eq:firstCarleman}, it may be conclude a result of Lipschitz stability for the semi-discrete inverse random source problem with boundary data that generalizes the result found in \cite{WWW:2024} to arbitrary dimensions as
\begin{theorem}
Let $a_{1i}\in L^\infty_{\mathcal{F}}(0,T;L_h^\infty(\mathcal{M}))$ for $i=1,\ldots,n$, and $a_2\in L_{\mathcal{F}}^{\infty}(0,T;L_h^{n^{\ast}}(\mathcal{M}))$ where $n^{\ast}$ satisfies \eqref{eq:conditionn*}. Assume that $g_k \in L^2_{\mathcal{F}}(0,T;H_h^1(\mathcal{M}))$ and satisfies the condition \eqref{conditionong} for $k=1,2$. Then, there exists a constant $\mathcal{C} > 0$, independent of $h$, such that
\begin{equation}
\|g_1 - g_2\|_{L^2_{\mathcal{F}}(0,T;L_h^2(\mathcal{M}))} \leq \mathcal{C} \|\Lambda_3(g_1) - \Lambda_3(g_2)\|_{(L^2_{\mathcal{F}}(0,T;L^2_{h}(\Gamma)))^n\times L^2_{\mathcal{F}_T}(\Omega,L^2_{h}(\mathcal{M}))}.
\end{equation}
where $\Lambda_{3}$ is defined as
\[
\Lambda_{3}(g)=(\left.\partial_{\nu}w\right|_{(0,T)\times\Gamma},\left.w\right|_{t=T})
\]
for $w$ solution of \eqref{systemrandomsource}.
\end{theorem}

We continue in this fashion obtaining the following result about the estimate for the solution of a semi-discrete homogeneous stochastic parabolic equation with non-homogeneous Dirichlet boundary conditions.
}
\begin{lemma}\label{lemmaoftheu}
Let $\xi \in H_{\mathcal{F}}^1(0,T; H_h^{1/2}(\partial \mathcal{M}) \cap H_h^1(\partial \mathcal{M}))$ with $\left.\xi\right|_{t=0} = 0$. Then, for a.e. $\omega \in \Omega$, the solution $u \in L^2(0,T; H_h^2(\mathcal{M})) \cap C(0,T; L_h^2(\mathcal{M}))$ of the system:
\begin{equation}\label{sys:homogeneuous}
    \begin{cases}
    du - \sum_{i=1}^{n} D_i(\gamma_i D_i u) \, dt = 0, & \text{in } Q, \\
    u = \xi,\quad\text{on } \partial Q,\quad \left.u\right|_{t=0} = 0,\quad  \text{in } \mathcal{M},
    \end{cases}
\end{equation}
satisfies the following stability estimate for any constant $\mathcal{C} > 0$:
\begin{equation}\label{ine:stability:H2}
    \|u\|_{L^2_{\mathcal{F}}(0,T; H_h^2(\mathcal{M}))} + \|u\|_{L^2_{\mathcal{F}}(\Omega; C([0,T]; L_h^2(\mathcal{M})))} \leq \mathcal{C} \|\xi\|_{H^1_{\mathcal{F}}(0,T; H_h^{1/2}(\partial \mathcal{M}) \cap H_h^1(\partial \mathcal{M}))}.
\end{equation}
Moreover, the following boundary estimate holds for $t_r^i(D_i u)$ on $\partial\mathcal{M}$:
\begin{equation}\label{ine:stability:boundary}
    \sum_{i=1}^{n} \mathbb{E} \int_{\partial Q} t_r^i(|D_i u|^2) \, dt \leq \mathcal{C} \|\xi\|_{L^2_{\mathcal{F}}(0,T; H_h^{1/2}(\partial \mathcal{M}) \cap H_h^1(\partial \mathcal{M}))}.
\end{equation}
\end{lemma}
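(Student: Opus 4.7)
The plan is a pathwise analysis combined with lifting of the boundary datum. Since system \eqref{sys:homogeneuous} has no $dB(t)$ term on the right-hand side, for a.e. $\omega \in \Omega$ the equation reduces to a deterministic semidiscrete heat equation with time-dependent Dirichlet data $\xi(\omega,\cdot,\cdot)$, and all estimates will be obtained pathwise before taking expectation. I would construct a lift $\tilde\xi(t,\cdot)\in H_h^2(\mathcal{M})$, for instance the discrete $\gamma$-harmonic extension solving $\sum_{i=1}^n D_i(\gamma_i D_i \tilde\xi) = 0$ in $\mathcal{M}$ with $\tilde\xi|_{\partial\mathcal{M}} = \xi$. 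By the very definition of $\|\cdot\|_{H_h^{1/2}(\partial\mathcal{M})}$ together with discrete elliptic regularity (uniform in $h$), this extension satisfies $\|\tilde\xi(t)\|_{H_h^2(\mathcal{M})} \leq C\|\xi(t)\|_{H_h^{1/2}(\partial\mathcal{M})\cap H_h^1(\partial\mathcal{M})}$, and the same bound holds for $\partial_t\tilde\xi$ since harmonic extension commutes with time differentiation.

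Setting $v \triangleq u - \tilde\xi$, from $\xi(0) = u(0) = 0$ and the harmonicity of $\tilde\xi$ we obtain
\[
\partial_t v - \sum_{i=1}^n D_i(\gamma_i D_i v) = -\partial_t\tilde\xi \quad \text{in }\mathcal{M}\times(0,T), \qquad v = 0 \text{ on }\partial\mathcal{M}\times(0,T), \qquad v(0)=0.
\]
Testing against $v$, integrating by parts via Proposition \ref{prop:integralbyparts}, and applying Gronwall's inequality furnishes
\[
\|v\|_{L^2(0,T;H_h^1(\mathcal{M}))} + \|v\|_{C([0,T];L_h^2(\mathcal{M}))} \leq C\|\partial_t\tilde\xi\|_{L^2(0,T;L_h^2(\mathcal{M}))}.
\]
To upgrade this to the $H_h^2$-bound in \eqref{ine:stability:H2}, I would interpret the original equation as a discrete Poisson problem $\sum_i D_i(\gamma_i D_i u) = \partial_t u$ at each fixed time, with boundary data $\xi$ in $H_h^1(\partial\mathcal{M})$, and apply discrete elliptic regularity; the $C([0,T];L_h^2)$ piece then follows from the energy identity and the triangle inequality. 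Taking expectation yields \eqref{ine:stability:H2}.

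For the boundary estimate \eqref{ine:stability:boundary}, the strategy is a discrete Rellich-type multiplier identity. Multiplying the equation by a discrete analog of $(x - x^*)\cdot\nabla u$, namely by the combination of the operators $A_i D_i u$ weighted by $(x - x^*)\cdot e_i$, and integrating by parts using Proposition \ref{prop:integralbyparts} twice, the dominant boundary contribution on $\partial_i\mathcal{M}$ reads $\gamma_i \nu_i (x-x^*)\cdot e_i \, t_r^i(|D_i u|^2)$. By the definition of $\Gamma^+$ this contribution is nonnegative precisely on $\Gamma^+$, so rearranging the identity puts $\sum_{i=1}^n\int_0^T\!\!\int_{\Gamma^+}t_r^i(|D_i u|^2)\,dt$ on the left, controlled by $\|u\|_{L^2(0,T;H_h^2(\mathcal{M}))}$, $\|\partial_t u\|_{L^2(0,T;L_h^2(\mathcal{M}))}$, and boundary terms bounded by $\|\xi\|_{H^1_{\mathcal{F}}(0,T;H_h^1(\partial\mathcal{M})\cap H_h^{1/2}(\partial\mathcal{M}))}$. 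Combining with \eqref{ine:stability:H2} and taking expectation concludes the proof.

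The main obstacle I anticipate is keeping the algebra of the discrete Rellich identity clean. The product and averaging rules \eqref{eq:difference:product}--\eqref{eq:average:product} produce $\mathcal{O}(h^2)$ correction terms whenever the multiplier $x-x^*$ interacts with $A_i u$ or $A_i D_i u$, and these must be tracked so that they can be absorbed into the interior $H_h^2$-norm without destroying the sign of the boundary term on $\Gamma^+$. The accompanying uniform-in-$h$ discrete elliptic regularity estimate needed for the harmonic extension is likewise delicate but is by now a fairly standard ingredient in the semidiscrete Carleman literature.
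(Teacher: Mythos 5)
Your proposal follows the same skeleton as the paper's proof (lift the boundary datum, energy estimate for the shifted variable, Rellich-type multiplier for the boundary term), but two of your steps would fail as stated. First, the lift: you take the discrete $\gamma$-harmonic extension and assert $\|\tilde\xi(t)\|_{H_h^2(\mathcal{M})}\leq C\|\xi(t)\|_{H_h^{1/2}(\partial\mathcal{M})\cap H_h^1(\partial\mathcal{M})}$ with $C$ uniform in $h$. No such estimate is available: already in the continuum the harmonic extension of $H^1(\partial\Omega)$ Dirichlet data lies only in $H^{3/2}(\Omega)$, and reaching $H^2$ requires $H^{3/2}$-regularity of the trace, so any discrete analogue of your inequality carries a constant blowing up like $h^{-1/2}$. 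The same objection applies to your second invocation of ``discrete elliptic regularity'' to upgrade $u$ itself to $H_h^2$; moreover, reading the equation as a Poisson problem with right-hand side $\partial_t u$ presupposes $\partial_t u\in L^2(0,T;L_h^2(\mathcal{M}))$, a bound your energy estimate (testing with $v$) does not produce — you would need an additional maximal-regularity-type estimate. The paper sidesteps elliptic regularity entirely: it takes the extension $v$ realizing the minimum in the definition of $\|\cdot\|_{H_h^{1/2}(\partial\mathcal{M})}$, so that $\|v\|_{H_h^1(\mathcal{M})}=\|\xi\|_{H_h^{1/2}(\partial\mathcal{M})}$, and extracts only $L^2(H_h^1)\cap C([0,T];L_h^2)$ control from the energy identity (its own passage to the $H_h^2$ norm in \eqref{ine:stability:H2} is terse, but it never relies on an $h$-uniform $H^2$ lift of the boundary data).

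Second, the sign issue in your Rellich identity. With the multiplier $(x-x^*)\cdot e_i\,A_iD_iu$, the boundary contribution in direction $i$ is, up to lower-order terms, $\tfrac12\int_{\partial_i\mathcal{M}}(x_i-x_i^*)\nu_i\,A_i\gamma_i\,t_r^i(|D_iu|^2)$. On $\partial_i\mathcal{M}\setminus\Gamma^+$ the weight $(x_i-x_i^*)\nu_i$ is negative, so after rearranging you are left with $+\int_{\partial_i\mathcal{M}\setminus\Gamma^+}|(x_i-x_i^*)\nu_i|\,t_r^i(|D_iu|^2)$ on the right-hand side. This is a normal-difference trace on the unobserved part of the boundary and is not ``bounded by $\|\xi\|_{H^1_{\mathcal{F}}(0,T;H_h^1\cap H_h^{1/2}(\partial\mathcal{M}))}$'': the Dirichlet datum controls only tangential information there. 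You must either bound it by a discrete trace inequality in terms of $\|u\|_{H_h^2(\mathcal{M})}$, or argue as the paper does: in each direction $j$ use the two weights $x_j$ and $1-x_j$, each vanishing on the opposite face of the cube, so that every multiplier produces exactly one signed boundary face (cf.\ \eqref{eq:boundary:1}) while all remaining terms are interior and absorbed into $C\|u\|^2_{L^2(0,T;H_h^2(\mathcal{M}))}$, which is then closed with \eqref{ine:stability:H2} to give \eqref{ine:stability:boundary}. Note also that the paper substitutes $du=\sum_i D_i(\gamma_iD_iu)\,dt$ from the equation in this step, so no separate bound on $\partial_t u$ is ever needed.
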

\begin{proof}
Noting that for a.e. $\omega \in \Omega$ and $t \in [0,T]$, $\xi \in H_h^{1/2}(\partial \mathcal{M})$, there exists $v \in H_h^1(\mathcal{M})$ such that for a.e. $\omega \in \Omega$ and $t \in [0,T]$, $v = \xi$ on $\partial \mathcal{M}$ and
\begin{equation}\label{eq:tracexi}
\|v\|_{H_h^1(\mathcal{M})} =  \|\xi\|_{H_h^{1/2}(\partial \mathcal{M})}.
\end{equation}
Let $\tilde{u} = u - v$. Since $v|_{t=0}=0$ by \eqref{eq:tracexi} and $\xi|_{t=0}=0$, it follows that $\tilde{u}|_{t=0} = u|_{t=0} - v|_{t=0} = 0$. Moreover, on $\partial\mathcal{M}\times(0,T)$ we have $\tilde{u} = \xi - \xi = 0$. Therefore, $\tilde{u}$ satisfies
\begin{equation}\label{proof:sys:homogeneuous}
    \begin{cases}
    d\tilde{u} - \sum_{i=1}^{n} D_i(\gamma_i D_i \tilde{u}) \, dt = \left(\partial_t v - \sum_{i=1}^{n} D_i(\gamma_i D_i v)\right) \, dt, & \text{in } \mathcal{M} \times (0,T), \\
    \tilde{u} = 0, \quad\quad\text{on } \partial \mathcal{M} \times (0,T),\quad\quad
    \left.\tilde{u}\right|_{t=0} = 0,\quad \text{in } \mathcal{M}.
    \end{cases}
\end{equation}
The task is now to find an energy estimate for $u$. Let us apply the It\^{o}'s formula to $\displaystyle \frac{1}{2}\tilde{u}^2$ yields $\displaystyle \frac{1}{2}d(\tilde{u}^2)=\tilde{u}d\tilde{u}+\frac{1}{2}|d\tilde{u}|^2$. As $\tilde{u}$ satisfies \eqref{proof:sys:homogeneuous}, we have 
\[
\frac{1}{2}d(\tilde{u}^2)=\sum_{i=1}^{n}\tilde{u}D_i(\gamma_iD_i\tilde{u})\,dt+\tilde{u}\partial_{t}v\,dt-\sum_{i=1}^{n}\tilde{u}D_i(\gamma_iD_iv)\,dt
\]
By integration over $\mathcal{M}$ and integration by parts to the first and third terms, we get
\[
\frac{1}{2}\int_{\mathcal{M}}d(\tilde{u}^{2})=-\sum_{i=1}^{n}\int_{\mathcal{M}_i^{\ast}}\gamma_i|D_i(\tilde{u})|^2\,dt+\int_{\mathcal{M}}\tilde{u}\partial_{t}v\,dt+\sum_{i=1}^{n}\int_{\mathcal{M}_{i}^{*}}\gamma_iD_i(\tilde{u})\;D_i(v)\,dt,
\]
which is due to the fact that $\tilde{u}=0$ on $\partial\mathcal{M}$. From Young's inequality on the second and third terms in the above equation, it follows that
\[
\frac{1}{2}\int_{\mathcal{M}}d(\tilde{u}^2)\leq-\frac{1}{2}\sum_{i=1}^{n}\int_{\mathcal{M}_i^{*}}\gamma_i|D_i\tilde{u}|^2\,dt+\frac{1}{2}\int_{\mathcal{M}}|\tilde{u}|^2\,dt+\frac{1}{2}\int_{\mathcal{M}}|\partial_{t}v|^2\,dt+\frac{1}{2}\sum_{i=1}^{n}\int_{\mathcal{M}_i^{*}}\gamma_i|D_iv|^2\,dt
\]
Now, integrating on $[0,t]$ for $t\in [0,T]$, using \eqref{eq:tracexi} of the above inequality and from condition that $\left.\tilde{u}\right|_{t=0}=0$, we can assert that
\begin{equation}\label{eqE}
    \frac{1}{2} \, \int_{\mathcal{M}}|\tilde{u}|^2+ \frac{1}{2}\sum_{i=1}^{n}\int_{Q_i^{*}} \gamma_i|D_i\tilde{u}|^2\, dt \leq \mathcal{C}\int_{0}^{T}\left(\|\xi_t\|^2_{ H_h^{1/2}(\partial \mathcal{M})}+\|\xi\|^2_{H_h^{1/2}(\partial \mathcal{M})}\right)\, dt.
\end{equation}
Recalling $u = \tilde{u} + v$ and using the above inequality, we deduce that
\[
\begin{aligned}
\frac{1}{2}\int_{\mathcal{M}}|u|^2+\frac{1}{2}\sum_{i=1}^{n}\int_{Q_i^{*}}\gamma_i|D_iu|^2\,dt\leq \mathcal{C}\int_{0}^{T}\left(\|\xi_t\|^2_{ H_h^{1/2}(\partial \mathcal{M})}+\|\xi\|^2_{H_h^{1/2}(\partial \mathcal{M})}\right)\, dt+\frac{1}{2}\int_{\mathcal{M}}|v|^2.
\end{aligned}
\]
To estimate the term $\displaystyle \int_{\mathcal{M}}|v|^2$, we note that applying Young's inequality and \eqref{eq:tracexi} enables us to write
\[
\frac{1}{2}\int_{\mathcal{M}}\partial_{t}|v|^2\leq \frac{1}{2}\int_{\mathcal{M}}v\,\partial_{t}v\leq \frac{1}{2}\int_{\mathcal{M}}|v|^2+|v_{t}|^2\leq \frac{1}{2}\left(\|\xi\|^2_{H^{1/2}_{h}(\partial \mathcal{M})}+\|\xi_{t}\|_{H^{1/2}_{h}(\partial\mathcal{M})}\right)  
\]
Thus, 
\[
\int_{\mathcal{M}}|v|^2\leq \int_0^{T}\|\xi_{t}\|^2_{H_{h}^{1/2}(\partial\mathcal{M})}+\|\xi\|^2_{H_{h}^{1/2}(\partial \mathcal{M})}\,dt
\]
where we used the integration on $[0,T]$ and that $\left.v\right|_{t=0}=0$.
Combining these inequalities, we conclude that
\begin{equation}\label{eq:estimateenergy}
  \frac{1}{2}\int_{\mathcal{M}}|u|^2+\frac{1}{2}\sum_{i=1}^{n}\int_{Q_i^{*}}\gamma_i|D_iu|^2\,dt\leq \mathcal{C}\int_{0}^{T}\left(\|\xi_t\|^2_{ H_h^{1/2}(\partial \mathcal{M})}+\|\xi\|^2_{H_h^{1/2}(\partial \mathcal{M})}\right)\, dt.
\end{equation}
Therefore, taking the supremum over $[0,T]$ in the above inequality only considered the first terms, we conclude that
\begin{equation*}
\|u\|_{C(0,T; L_h^2(\mathcal{M}))} \leq \mathcal{C}\int_{0}^{T} \|\xi_{t}\|^2_{H^{1/2}_{h}(\partial \mathcal{M})}+\|\xi\|^2_{H_{h}^{1/2}(\partial\mathcal{M})} dt.
\end{equation*}
Furthermore, using \eqref{eq:estimateenergy}, the above inequality and integrating over $(0,T)$, we have
\begin{equation*}
\|u\|_{L_h^2(0,T; H^1_{h}(\mathcal{M}))} \leq\mathcal{C}\int_{0}^{T} \|\xi_{t}\|^2_{H^{1/2}_{h}(\partial \mathcal{M})}+\|\xi\|^2_{H_{h}^{1/2}(\partial\mathcal{M})} dt.
\end{equation*}
Similar arguments apply to the case $D_iu$ for each $i=1,\ldots,n$, yields
\[
\|u\|_{L_h^2(0,T; H_{h}^2(\mathcal{M}))} \leq\mathcal{C}\int_{0}^{T} \|\xi_{t}\|^2_{H^{1/2}_{h}(\partial \mathcal{M})\cap H^{1}(\partial\mathcal{M})}+\|\xi\|^2_{H_{h}^{1/2}(\partial\mathcal{M})\cap H^1_{h}(\partial\mathcal{M})} dt.
\]
Combining these inequalities and taking the expectation, we derive the first estimate \eqref{ine:stability:H2}.

%%%%%%%%%%%%%%%%%%%%%%%%%%%%%%%%%%%%%%%%%%%%%%%%%%%%%%%%%%%%%%%%%%%%%%%%%%%%%%%%%%%%%%%%%%%%%%%%%%%%%%%%%%%%%%%%%%%%%%%%%%%%%%%%%%%%%%%%%%%%%%%%%%%%%%%%
Our next goal is to determine the estimate for $\displaystyle \sum_{i=1}^{n}\E\int_{\partial Q}t_{r}^{i}(|D_iu|^2)\,dt$. In order to obtain this inequality, it is convenient to consider the following 
\begin{equation}\label{eq:identityD}
2x_{j}D_{j}^2(u)A_{j}D_ju=x_{j}D_{j}(|D_ju|^2)
\end{equation}
where $x_{j}=x\cdot e_{j}$.

Integrating the results over $\mathcal{M}\times(0,T)$ and applying an integration by parts it follows
\begin{align*}
    \frac{1}{2}\int_{\mathcal{M}}x_{j}D_{j}(|D_{j}u|^{2})=&-\frac{1}{2}\int_{\mathcal{M}^{\ast}_{j}}D_{j}(x_{j})|D_{j}u|^{2}+\frac{1}{2}\int_{\partial_{j}\mathcal{M}}x_{j}t_{r}^{j}(|D_{j}u|^{2})\nu_{j}.
\end{align*}
Thus, denoting $\Gamma_{j}:=\{ x\in\partial_{j}\mathcal{M}\mid x_{j}\nu_{j}= 1\}\subset\partial_j\mathcal{M}$ it follows that the above equals
\begin{align}\label{eq:boundary:1}
    \frac{1}{2}\int_{\mathcal{M}}x_{j}D_{j}(|D_{j}u|^{2})=-\frac{1}{2}\int_{\mathcal{M}_{j}^{\ast}}|D_{j}u|^2+\frac{1}{2}\int_{\Gamma_{j}}t_{r}^{j}(|D_{j}u|^{2}).
\end{align}
Hence, integrating over $\mathcal{M}\times (0,T)$ in \eqref{eq:identityD}, and then  using equation \eqref{eq:boundary:1} we obtain
\begin{equation}
\begin{aligned}
\int_{Q}\,x_{j}D_j^2(u)A_{j}D_{j}(u)\,\,dt=-\frac{1}{2}\int_{\mathcal{M}_{j}^{\ast}}|D_{j}u|^2+&\frac{1}{2}\int_{\Gamma_{j}\times(0,T)}t_{r}^{j}(|D_{j}u|^{2})\,dt
\end{aligned}
\end{equation}
Finally, using the Young inequality and taking expectation we conclude that there exists a constant $\mathcal{C}$ independent of $h$ such that
\begin{equation}\label{ine:boundary:H^{2}}
    \E\int_{\Gamma_{j}\times(0,T)}t_{r}^{j}(|D_{j}u|^{2})dt\leq \mathcal{C}\left\| u\right\|^{2}_{L^{2}_{\mathcal{F}}(0,T;H_h^{2}(\mathcal{M}))}.
\end{equation}
The inequality \eqref{ine:boundary:H^{2}} stands for a bound for the elements of the boundary $\Gamma_{j}$, in the direction $x_{j}$, such that its normal function is equal to one. Similarly, it is possible to obtain a bound for the normal derivative on the boundary nodes where the normal function defined in \eqref{normal:function} is minus one, that is, to adopt the previous methodology by using the term $(1-x_{j})A_{j}D_{j}$ instead of the used to obtain the equation \eqref{eq:identityD}. Moreover, we emphasize that we can obtain the inequality \eqref{ine:boundary:H^{2}} in any direction. In conclusion, the solution of the system \eqref{sys:homogeneuous} satisfies
\begin{equation}\label{ine:boundary:normal:derivative}
   \sum_{i=1}^{n}\E\int_{\partial Q}t_{r}^{i}(|D_{i}u|^{2})\,dt\leq 
   \mathcal{C} \left\| u\right\|^{2}_{L_{\mathcal{F}}^{2}(0,T;H_h^{2}(\mathcal{M}))}. 
\end{equation}
Overall, combining \eqref{ine:stability:H2} and \eqref{ine:boundary:normal:derivative} we obtain \eqref{ine:stability:boundary}.
\end{proof}
\begin{remark}
We emphasize that the key argument of the proof of Lemma \ref{lemmaoftheu}  is that we are dealing with square domains.
\end{remark}

\subsection{Semi-discrete Carleman estimate with nonhomogeneous Dirichlet condition.}
Based on this choice of $\psi=d_0$, we now state the corresponding Carleman estimate, which extends \eqref{eq:firstCarleman} to the present setting. 
For completeness, we also provide its proof in the following.

\begin{theorem}\label{theo: CarlemanNonhomgeneous.}
    Let $d_0$ satisfying the Assumption \ref{WFdataboundary} and $\varphi=e^{\lambda d_0}$. For $\lambda\geq 1$ sufficiently large, there exist $\mathcal{C}$, $\tau_{0}\geq 1$, $h_{0}>0$, $\varepsilon_0 >0$, depending on $G$,$\beta$ $c_{0}$, $T$, and $\lambda$, such that 
\begin{equation}\label{eq:secondCarleman}
\begin{split}
&J(w)+\E\int_{Q}s^{3}\lambda^{4}_{1}\varphi^{3}\,e^{2s\varphi}|w|^{2}\,dt+\sum_{i,j=1}^{n}\E\int_{Q_{ij}^\ast}s^{-1}e^{2s\varphi}|D_{ij}^2w|^2\, dt+\E\int_{Q}s\lambda_{1}^2\varphi\,e^{2s\varphi}|g|^2dt\\
&\leq\,\mathcal{C}_{\tau_{0},\varepsilon_0}\left(\E\int_{Q}\,e^{2s\varphi}|f|^2\,dt+\sum_{i=1}^{n}\E\int_{Q_i^{\ast}}\,e^{2s\varphi}|D_{i}g|^2dt+\sum_{i=1}^{n}\E\int_0^T 
\int_{\Gamma}st_{r}^{i}(e^{2s\varphi}|D_iw|^2)\,dt\right.\\
&\left.+\left.\E\int_{\mathcal{M}}s^2\,e^{2s\varphi}|w|^2\right|_{t=T}\right)+\mathcal{C}\tau^3e^{\tau\mathcal{C}}\|\xi\|^2_{H_{\mathcal{F}}^{1}(0,T;H_{h}^{1/2}(\partial\mathcal{M})\cap H_{h}^1(\partial\mathcal{M}))}.
\end{split}
\end{equation}
for all $\tau\geq \tau_0$, $0<h\leq h_0$, $sh\leq \epsilon_0$, where $f\in L^{2}_{\mathcal{F}}(0,T;L_h^2(\mathcal{M}))$, $g\in L^{2}_{\mathcal{F}}(0,T;H^1(\mathcal{M}))$,\\
$\xi\in L^2_{\mathcal{F}}(0,T;H_h^1(\partial\mathcal{M})\cap H_{h}^{1/2}(\partial \mathcal{M)})$ satisfying compatibility condition $\left.\xi\right|_{t=0}=0$ for all $x\in\partial\mathcal{M}$  and $w$ satisfies $dw-{ \sum_{i=1}^{n}} D_i(\gamma_i D_{i}w)\,dt=fdt+gdB(t)$ with $w=\xi$ on $\partial Q$ and $\left.w\right|_{t=0}=0$ in $\mathcal{M}$.
\end{theorem}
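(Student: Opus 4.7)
The plan is to reduce the nonhomogeneous Dirichlet case to the homogeneous Carleman estimate \eqref{eq:firstCarleman} via a lifting argument, using Lemma \ref{lemmaoftheu} to absorb the cost of the boundary data $\xi$. Since the generalised weight $\psi$ in \eqref{psigeneral} still satisfies $|\nabla \Psi|>0$ in $G$, every step of the derivation of Theorem \ref{theo:Carleman} carries over verbatim to this setting, so we may freely invoke the homogeneous estimate with the new $\psi$.

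First, I would split $w = \tilde w + u$, where $u$ is the pathwise solution of the auxiliary system \eqref{sys:homogeneuous} with boundary data $\xi$ and zero initial condition; the compatibility hypothesis $\xi(0)=0$ puts $u$ in the functional setting of Lemma \ref{lemmaoftheu}. The difference $\tilde w = w-u$ then solves
\[
d\tilde w - \sum_{i=1}^n D_i(\gamma_i D_i\tilde w)\,dt = f\,dt + g\,dB(t) \text{ in } Q,\qquad \tilde w = 0 \text{ on }\partial\mathcal{M}\times(0,T),\qquad \tilde w(0)=0,
\]
so Theorem \ref{theo:Carleman} applies directly to $\tilde w$ with source terms $(f,g)$, yielding the full weighted left-hand side for $\tilde w$ controlled by the usual observations of $\tilde w$ on $\Gamma^+\times(0,T)$ and at $t=T$.

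Next I would transfer this estimate from $\tilde w$ to $w = \tilde w + u$. On the left-hand side, $|a+b|^2\le 2|a|^2+2|b|^2$ bounds each of $|w|^2$, $|D_i w|^2$, $|A_iD_iw|^2$, $|D_{ij}^2 w|^2$ by twice the corresponding $\tilde w$-integrand plus twice the same quantity in $u$. The $\tilde w$-part reconstructs the desired left-hand side of \eqref{eq:secondCarleman}; the $u$-part has to be pushed to the right. Because $\theta$ and $\psi$ are bounded on $\overline G$, we have $s(t)\le \tau e^{C\lambda}$ and $\varphi\le e^{C\lambda}$, so every weight of the form $s^k\varphi^{\ell}e^{2s\varphi}$ admits the crude pointwise bound $\tau^k e^{\tau\mathcal{C}}$ with $\mathcal{C}=\mathcal{C}(\lambda)$. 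Consequently, each $u$-contribution on the left is dominated by $\mathcal{C}\tau^3 e^{\tau\mathcal{C}}\|u\|^2_{L^2_{\mathcal{F}}(0,T;H_h^2(\mathcal{M}))}$, and Lemma \ref{lemmaoftheu}, estimate \eqref{ine:stability:H2}, controls this by the norm of $\xi$ appearing in \eqref{eq:secondCarleman}.

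Finally, the right-hand side of the homogeneous estimate for $\tilde w$, namely the trace $\sum_i \E\int_0^T\int_{\Gamma^+}s\,t_r^i(e^{2s\varphi}|D_i\tilde w|^2)\,dt$ and the terminal term $\E\int_{\mathcal{M}}s^2e^{2s\varphi}|\tilde w(T)|^2$, is expanded by the same $|a+b|^2\le 2|a|^2+2|b|^2$ split: the $w$-parts are exactly the observation terms already appearing in \eqref{eq:secondCarleman}, while the $u$-parts are absorbed using \eqref{ine:stability:boundary} for the boundary piece and \eqref{ine:stability:H2} for the terminal piece. Together with the left-hand side absorption, all remainders consolidate into a single error term of the claimed form $\mathcal{C}\tau^3 e^{\tau\mathcal{C}}\|\xi\|_{H^1_{\mathcal{F}}(0,T;H_h^1(\partial\mathcal{M})\cap H_h^{1/2}(\partial\mathcal{M}))}$. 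The main difficulty is purely bookkeeping: one must check that the crudest weight appearing on the left (the cubic weight $s^3\lambda^4\varphi^3$) is still absorbed within the $\tau^3 e^{\tau\mathcal{C}}$ budget, and that the factors of $\lambda$ do not blow up the constant beyond what can be buried in $\mathcal{C}$, so that the exponential factor in the final bound has the stated form.
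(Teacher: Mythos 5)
Your proposal is correct and follows essentially the same route as the paper's own proof: decompose $w$ as the sum of the lifted solution $u$ of \eqref{sys:homogeneuous} and a remainder with homogeneous boundary data, apply Theorem \ref{theo:Carleman} to the remainder, transfer back to $w$ by the triangle inequality, and absorb every $u$-contribution (interior, trace on $\Gamma^+$, and terminal term) through Lemma \ref{lemmaoftheu} together with the crude bound $s^{k}\varphi^{\ell}e^{2s\varphi}\leq \mathcal{C}\tau^{k}e^{\tau\mathcal{C}}$, producing the additive error $\mathcal{C}\tau^{3}e^{\tau\mathcal{C}}\|\xi\|$. No substantive difference from the paper's argument.
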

%%%%%%%%%%%%%%%%%%%%%%%%%%%%%%%%%%%%%%%%%%%%%%%%%%%%%%%%%%%%%%%%%%%%%%%%%%%%%%%%%%%%%%%%%%%%%%%%%%%%%%%%%%%%%%%%%%%%%%%%%%%%%%%%%%%%%%%%%%%%%%%%%%%%%%%%%%%%%%%%%%%%%%%%%%%%%%%%%%%%%%%%%%%%%%%%%%%%%%%%%%%%%%%%%%%%%%%%%%%%%%%%%%%%%%
\begin{proof}
Let $u$ such that satisfies \eqref{sys:homogeneuous} and taking $z=w-u$, we obtain
\begin{equation*}
    \begin{cases}
     dz-\sum_{i=1}^{n}D_i(\gamma_{i}D_i z)=f\,dt+g\,dB(t)    &\text{in}\; Q , \\
     z=0,\quad\quad\text{on}\,\partial Q,\quad\quad\left.z\right|_{t=0}=0 \quad\quad\text{in}\, \mathcal{M}.\\ 
    \end{cases}
\end{equation*}
Applying Theorem~\ref{theo:CarlemanDataboundary} to $z$, we have for all $\tau\geq \tau_0$, $0<h\leq h_0$, $sh\leq \epsilon_0$,
\begin{equation*}
\begin{split}
&J(z)+\E\int_{Q}s^{3}\lambda^{4}_{1}\varphi^{3}\,e^{2s\varphi}|z|^{2}\,dt+\sum_{i,j=1}^{n}\E\int_{Q_{ij}^{\ast}}s^{-1}e^{2s\varphi}|D_{ij}^2z|^2\, dt+\E\int_{Q}s\lambda_{1}^2\varphi\,e^{2s\varphi}|g|^2dt\\
&\leq\,C_{\tau_{0},\varepsilon_0}\left(\E\int_{Q}\,e^{2s\varphi}|f|^2\,dt+\sum_{i=1}^{n}\E\int_{Q_i^{\ast}}\,e^{2s\varphi}|D_{i}g|^2dt+\sum_{i=1}^{n}\E\int_0^T\int_{\Gamma}st_{r}^{i}(e^{2s\varphi}|D_iz|^2)\,dt\right.\\
&\left.+\left.\E\int_{\mathcal{M}}s^2\,e^{2s\varphi}|z|^2\right|_{t=T}\right).
\end{split}
\end{equation*}
Recalling $w=z+u$ and using the triangular inequality, we obtain
\begin{equation}\label{eq:fiC}
\begin{split}
&J(w)+\E\int_{Q}s^{3}\lambda^{4}_{1}\varphi^{3}\,e^{2s\varphi}|w|^{2}\,dt+\sum_{i,j=1}^{n}\E\int_{Q_{ij}^\ast}s^{-1}e^{2s\varphi}|D_{ij}^2w|^2\, dt+\E\int_{Q}s\lambda_{1}^2\varphi\,e^{2s\varphi}|g|^2dt\\
&\leq\,C_{\tau_{0},\varepsilon_0}\left(\E\int_{Q}\,e^{2s\varphi}|f|^2\,dt+\sum_{i=1}^{n}\E\int_{Q_i^{\ast}}\,e^{2s\varphi}|D_{i}g|^2dt+\sum_{i=1}^{n}\E\int_{\Gamma}st_{r}^{i}(e^{2s\varphi}|D_iw|^2)\,dt\right.\\
&\left.+\left.\E\int_{\mathcal{M}}s^2\,e^{2s\varphi}|w|^2\right|_{t=T}+\sum_{i=1}^{n}\E\int_0^T\int_{\Gamma}st_{r}^{i}(e^{2s\varphi}|D_iu|^2)\,dt+\left.\E\int_{\mathcal{M}}s^2\,e^{2s\varphi}|u|^2\right|_{t=T}\right)\\
&+J(u)+\E\int_{Q}s^{3}\lambda^{4}_{1}\varphi^{3}\,e^{2s\varphi}|u|^{2}\,dt+{ \sum_{i,j=1}^{n}}\E\int_{Q}s^{-1}e^{2s\varphi}|D_{ij}^2u|^2\, dt.
\end{split}
\end{equation}
To estimate the term that depends on $u$ on the right-side hand of the above inequality, we note that
\begin{equation*}
\begin{split}    J(u)+\E\int_{Q}s^{3}\lambda^{4}_{1}\varphi^{3}\,e^{2s\varphi}|u|^{2}&\,dt+\sum_{i,j=1}^{n}\E\int_{Q_{ij}^\ast}s^{-1}e^{2s\varphi}|D_{ij}^2u|^2\, dt+\left.\E\int_{\mathcal{M}}s^2\,e^{2s\varphi}|u|^2\right|_{t=T}\\
&\leq\, \mathcal{C}\tau^3e^{\tau\mathcal{C}}\left(\|u\|^2_{L^2_{\mathcal{F}}(\Omega;L^2((0,T);H_h^2(\mathcal{M}))}+\left.\|u\right|_{t=T}\|^2_{L^2_{\mathcal{F}_T}(\Omega;L_h^2(\mathcal{M})}\right).
\end{split}
\end{equation*}
Now, from Lemma \ref{lemmaoftheu}, we have
\[
\begin{aligned}
    \|u\|&_{L^2_{\mathcal{F}}(\Omega;L^2((0,T);H_h^2(\mathcal{M}))}+\|\left.u\right|_{t=T}\|_{L^2_{\mathcal{F}_T}(\Omega;L_h^2(\mathcal{M})}&\\
    &\leq \mathcal{C}\left(\|u\|_{L^2_{\mathcal{F}}(0,T;H^2_{h}(\mathcal{M}))}+\|u\|_{L^2_{\mathcal{F}}(\Omega;C(0,T;H^2_{h}(\mathcal{M})))}\right)\le \mathcal{C}\|\xi\|_{H_{\mathcal{F}}^{1}(0,T;H_{h}^{1/2}(\partial \mathcal{M})\cap H_{h}^1(\partial \mathcal{M}))}
\end{aligned}
\]
and
\[
\sum_{i=1}^{n}\E\int_{0}^{T}\int_{\Gamma}t_{r}^{i}(|D_iu|^2)\,dt\leq \mathcal{C}\|\xi\|^2_{H_{\mathcal{F}}^{1}(0,T;H_{h}^{1/2}(\partial\mathcal{M})\cap H_{h}^1(\partial\mathcal{M}))}.
\]
Combining these inequalities, we deduce that
\[
\begin{aligned}
    J(u)+\E\int_{Q}s^{3}\lambda^{4}_{1}\varphi^{3}\,e^{2s\varphi}|u|^{2}\,dt+\sum_{i,j=1}^{n}&\E\int_{Q_{ij}^\ast}s^{-1}e^{2s\varphi}|D_{ij}^2u|^2\, dt+\left.\E\int_{\mathcal{M}}s^2\,e^{2s\varphi}|u|^2\right|_{t=T}\\
+\sum_{i=1}^{n}\E\int_{0}^{T}\int_{\Gamma}st_{r}^{i}(e^{s\varphi}|D_iu|^2)\,dt&\leq\, \mathcal{C}\|\xi\|^2_{H_{\mathcal{F}}^{1}(0,T;H_{h}^{1/2}(\partial\mathcal{M})\cap H_{h}^1(\partial\mathcal{M}))}.
\end{aligned}
\]
Finally, combining the above inequality with \eqref{eq:fiC}, the proof is complete.
\end{proof}
Based on the Carleman estimate with non-homogeneous boundary condition \eqref{eq:secondCarleman}, we are now in a position to prove the main stability result of this section, namely Theorem \ref{Teo:SecondResult}.
\subsection{Proof of Theorem \ref{Teo:SecondResult}.}\label{subsec:proofsecondresult}
In order to estimate the solution of \eqref{eq:cauchy problem} in $\mathcal{M}_0\times(\varepsilon,T-\varepsilon)$ by Cauchy data at $\Gamma\times(0,T)$. Thus, according to the geometry of $\mathcal{M}_0$ and $\Gamma$, we need to choose a suitable weight function $\psi$ and $\theta$, respectively. 

For this, we begin by $\psi$.  Choose a bounded domain $G_{1}$ with smooth boundary such that
\[\begin{aligned}
    G\subsetneq G_{1},\quad \Gamma:={\partial \mathcal{M}\cap G_1},\quad \partial \mathcal{M}\setminus \Gamma \subset \partial G_1\cap h\mathbb{Z}^{n}
\end{aligned}
\]
We see that $G_1$ is the union of $G$ with other set $\Tilde{G}$  such that $\partial \Tilde{G}\cap\overline{\mathcal{M}}=\Gamma$ and that $G_1\setminus \overline{G}$ have a non-empty open set. Choosing $\overline{\tilde{G}}_0\subset G_1\setminus\overline{G}$, we apply assumption \ref{Assumption_data_inteior} to obtain $\psi$ satisfying 
\begin{equation*}
    \begin{aligned}
        \psi(x)>0, x\in G_1\quad& \psi(x)=0,\,x\in \partial G_1,&|\nabla \psi(x)|>0,\quad x\in \overline{G_1}\cap \overline{G},
    \end{aligned}
\end{equation*}
where we see at once that $\psi=d_0$ also satisfies assumption \ref{WFdataboundary} with the sets $G$ and $\Gamma$. 

Since $\overline{\mathcal{M}}_0\subset G_1\cap h\mathbb{Z}^{n}$, it follows that for a sufficiently large $\tilde{N}>1$ such that
\begin{equation}
    \mathcal{M}_0\subset\left\{x\in G_1: \psi(x)>\frac{4}{\tilde{N}}\|\psi\|_{C(\overline{G_1})}\right\}\cap\overline{\mathcal{M}}.
\end{equation}
We now turn to $\theta$. We arbitrarily fix $t_0\in [\sqrt{2}\varepsilon,T-\sqrt{2}\varepsilon]$ and we choose $\beta>0$ such that
\begin{equation}
2\beta\varepsilon^2>\|\psi\|_{C(\overline{G_1})}>\beta\varepsilon^2.
\end{equation}

Now, let us consider $\varphi = e^{\lambda \psi}$ and $\theta = e^{-\lambda \beta |t - t_0|^2}$, with fixed large parameter $\lambda>0$ and subject to the previously stated properties of $\psi$, $\beta$, and $t_0$, we defined the following set
\begin{equation}
    G^{(k)}:=\left\{(x,t):x\in \overline{G}, \varphi(x)\theta(t)>\mu_{k}\;\text{such that}\; \mu_{k}=\exp{\left(\lambda\left(\frac{k}{\tilde{N}}\|\psi\|_{C(\overline{G_1})}-\frac{\beta\varepsilon^2}{\tilde{N}}\right)\right)},\right\}.
\end{equation}
with $k=1,2,3,4$.
In the same way as in the proof of Theorem 5.1 in \cite{Yamamoto_2009} it can be verified that
\begin{equation}\label{eq:propertyonewight}
    \mathcal{M}_0\times\left(t_0-\frac{\varepsilon}{\sqrt{\Tilde{N}}},t_0+\frac{\varepsilon}{\sqrt{\Tilde{N}}}\right)\subset G^{(k)}\cap Q\subset \overline{\mathcal{M}}\times (t_0-\sqrt{2}\varepsilon,t_0+\sqrt{2}\varepsilon)
\end{equation}
and
\begin{equation}\label{eq:propertytwoweight}
    \begin{split}
        \partial G^{(1)}\cap\partial Q\subset \Sigma^{(1)}\cup \Sigma^{(2)},&\\
        \Sigma^{(1)}\subset \Gamma\times(0,T),\quad &\quad\Sigma^{(2)}:=\{(x,t):x\in \mathcal{M}, \varphi(x)\theta(t)=\mu_1\}.
    \end{split}
\end{equation}
%%%%%%%%%%%%%%%%%%%%%%%%%%%%%%%%%%%%%%%%%%%%%%%%%%%%%%%%%%%%%%%%%%%%%%%%%%%%%%%%%%%%%%%%%%%%%%%%%%%%%%%%%%%%%%%%%%%%%%%%%%%%%%%%%%%%%%%%%%%%%%%%%%%%%%%%%%%%%%%%%%%%%%%%%%%%%%%%%%%%%%%%%%%%%
In order to apply Theorem \ref{theo: CarlemanNonhomgeneous.}, we need a cut-off function because we have no data on $\partial G^{(1)}\setminus (\Gamma\times(0,T))$. Let $\chi\in C^{\infty}(\mathbb{R}^{n+1})$ such that $0\leq \chi \leq 1$ and
\begin{equation}\label{eq:definitionchi}
    \chi(x,t):=\left\{ \begin{array}{cc}
     1,    &\varphi(x)\theta(t)>\mu_3,  \\
     0,    &\varphi(x)\theta(t)<\mu_2. 
    \end{array}\right.
\end{equation}
We set $v=\chi w$, and we have
\begin{equation*}
    dv-\sum_{i=1}^{n}D_i(\gamma_iD_i v)dt=\left(\sum_{i=1}^{n}a_{1i}A_iD_i(v)+a_2v+\mathbf{F}\right)dt+a_3vdB(t),\quad \text{in}\, Q
\end{equation*}
where
\begin{equation*}
    \begin{split}
        \mathbf{F}:= &\chi_{t}w-\sum_{i=1}^{n}D_i(\gamma_iD_i(\chi)A(w))+A_iD_i(\chi)A_i(\gamma_iD_iw)+a_{1i}A_iD_i(\chi)A_i^2(w)\\
        &+\frac{h^2}{4}\left(D_i^2(\chi)D_i(\gamma_iD_i(w))+a_{1i}D_i^2(\chi)A_iD_i(w)+2a_{1i}A_iD_i(\chi)D_i^2(w)\right).  
    \end{split}
\end{equation*}
By \eqref{eq:propertytwoweight} and \eqref{eq:definitionchi}, we see that for all $i{ =1,...,n.}$
\begin{equation}\label{eq:conditionfirstofthev}
    v=D_{i}v=0\quad\quad\text{on}\,\,  \Sigma^{(2)}.
\end{equation}
Since $t_0\in [\sqrt{2}\varepsilon,T-\sqrt{2}\varepsilon]$, we obtain the following result
\begin{equation*}
    \max\left\{\psi(x)-\beta |t_0|^2,\psi(x)-\beta|T-t_0|^2)\right\}\leq \psi(x)-2\beta \varepsilon^2\leq 0,
\end{equation*}
which implies $(\mathcal{M}\times\{t=0\})\cup (\mathcal{M}\times\{t=T\})\nsubseteq \sup{(\chi)}$ and then 
\begin{equation}\label{eq:conditionsecondofthev}
\left.v\right|_{t=0}=\left.v\right|_{t=T}=0.
\end{equation}
We are now in a position to use \eqref{eq:secondCarleman} on $v$. Taking into account \eqref{eq:conditionfirstofthev}, \eqref{eq:conditionsecondofthev}, we have for all $\tau\geq \tau_0$, $0<h\leq h_0$, $sh\leq \epsilon_0$
\begin{equation*}
\begin{split}
&J(v)+\E\int_{Q}s^{3}\lambda^{4}_{1}\varphi^{3}\,e^{2s\varphi}|v|^{2}\,dt+\sum_{i,j=1}^{n}\E\int_{Q_{ij}^\ast}s^{-1}e^{2s\varphi}|D_{ij}^2v|^2\, dt+\E\int_{Q}s\lambda_{1}^2\varphi\,e^{2s\varphi}|a_3 v|^2dt\\
&\leq\,\mathcal{C}_{\tau_{0},\varepsilon_0}\left(\E\int_{Q}\,e^{2s\varphi}\left|\sum_{i=1}^{n}a_{1i}A_iD_i(v)+a_2v+\mathbf{F}\right|^2\,dt+\sum_{i=1}^{n}\E\int_{Q_i^{\ast}}\,e^{2s\varphi}|D_i(a_3v)|^2dt\right.\\
&\left.+\sum_{i=1}^{n}\E\int_0^T\int_{\Gamma}st_{r}^{i}(e^{2s\varphi})|t_{r}^{i}(D_iv)|^2\,dt\right)+\mathcal{C}\tau^3e^{\tau\mathcal{C}}\|\xi\|^2_{H^1_{\mathcal{F}}(0,T;H_h^1(\Gamma))},
\end{split}
\end{equation*}

Using the Young inequality and \eqref{eq:difference:product} to the first and second integral from the right-hand side above, respectively, we can bound the above inequality as
\begin{equation}\label{eq:proofsecondresultdCarleman}
\begin{split}
J(v)+&\E\int_{Q}s^{3}\lambda^{4}_{1}\varphi^{3}\,e^{2s\varphi}|v|^{2}\,dt+\sum_{i,j=1}^{n}\E\int_{Q_{ij}^{\ast}}s^{-1}e^{2s\varphi}|D_{ij}^2v|^2\, dt+\E\int_{Q}s\lambda_{1}^2\varphi\,e^{2s\varphi}|a_3 v|^2dt\\
&\leq\,\mathcal{C}\tau^3e^{\tau\mathcal{C}}\|\xi\|^2_{H^1_{\mathcal{F}}(0,T;H_h^1(\Gamma)}+\mathcal{C}\left(\E\int_{Q}\,e^{2s\varphi}|\mathbf{F}|^2dt+\sum_{i=1}^{n}\E\int_{Q}\,e^{2s\varphi}|A_{i}D_{i}v|^2\,dt\right.\\
&\left.+\E\int_{Q}e^{2s\varphi}|a_2v|^2\,dt+\sum_{i=1}^{n}\E\int_{Q_i^{\ast}}\,e^{2s\varphi}\left(|A_{i}v|^2+|D_{i}v|^2\right)\,dt\right.\\
&\left.+\sum_{i=1}^{n}\E\int_0^T\int_{\Gamma}st_{r}^{i}(e^{2s\varphi})|t_{r}^{i}(D_iv)|^2\,dt\right).
\end{split}
\end{equation}
Noting that using \eqref{eq:inequalityAverange}, \eqref{eq:int:ave} and $e^{-2s\varphi}A_i(e^{2s\varphi})=1+\mathcal{O}((sh)^2)$, for each $i{ =1,...,n.}$ we have
\begin{equation*}
\begin{split}
\E\int_{Q_i^{\ast}}e^{2s\varphi}|A_{i}v|^2\,dt\leq& \E\int_{Q_i^{\ast}}e^{2s\varphi}A_i(|v|^2)\,dt=\; \E\int_{Q}A_i(e^{2s\varphi})|v|^2\,dt+\frac{h}{2}\E\int_{\partial_i Q} |v|^2t_r^{i}(e^{2s\varphi})\,dt\\
\leq&\E\int_{Q}e^{2s\varphi}|v|^2\,dt+\frac{h}{2}\E\int_{\partial_i Q} |v|^2t_r^{i}(e^{2s\varphi})\,dt+\mathcal{O}((sh)^2)\int_{Q}e^{2s\varphi}|v|^2\,dt
\end{split}
\end{equation*}
Therefore, combining the above inequalities and \eqref{eq:proofsecondresultdCarleman}, we obtain the following  
\begin{equation*}
\begin{split}
&J(v)+\E\int_{Q}s^{3}\lambda^{4}_{1}\varphi^{3}\,e^{2s\varphi}|v|^{2}\,dt+\sum_{i,j=1}^{n}\E\int_{Q_{ij}^{\ast}}s^{-1}e^{2s\varphi}|D_{ij}^2v|^2\, dt+\E\int_{Q}s\lambda_{1}^2\varphi\,e^{2s\varphi}|a_3 v|^2dt\\
&\leq\,\mathcal{C}\left(\E\int_{Q}\,e^{2s\varphi}|\mathbf{F}|^2dt+\E\int_{Q}\,e^{2s\varphi}|v|^2\,dt+\mathcal{O}((sh)^2)\int_{Q}e^{2s\varphi}|v|^2\,dt\right.\\
&+\sum_{i=1}^{n}\left(\E\int_{Q}\,e^{2s\varphi}|A_{i}D_{i}v|^2
+\frac{h}{2}\int_0^T\int_{\Gamma} |v|^2t_r^{i}(e^{2s\varphi})+\E\int_{Q_i^{\ast}}\,e^{2s\varphi}|D_{i}v|^2\right)\,dt\\
&\left.+\sum_{i=1}^{n}\E\int_0^T\int_{\Gamma}st_{r}^{i}(e^{2s\varphi})|t_{r}^{i}(D_iv)|^2\,dt+\E\int_{Q}e^{2s\varphi}|v|^2\,dt\right)+\mathcal{C}\tau^3e^{\tau\mathcal{C}}\|\xi\|^2_{H^1_{\mathcal{F}}(0,T;H_h^1(\Gamma))}.
\end{split}
\end{equation*}

Recalling that $s(t)=\tau\theta(t)$ and letting $\tau$ sufficiently large and taking a convenient constant, we notice that the second, third, and fifth integrals are absorbed by the left side in the above inequality, and the fourth integral is controlled by the last integral on the right side. Thus, the above inequality can be rewritten as follows
\begin{equation}\label{eq:absorbthecarleman}
\begin{split}
&J(v)+\E\int_{Q}s^{3}\lambda^{4}_{1}\varphi^{3}\,e^{2s\varphi}|v|^{2}\,dt+\sum_{i,j=1}^{n}\E\int_{Q_{ij}^{\ast}}s^{-1}e^{2s\varphi}|D_{ij}^2v|^2\, dt+\E\int_{Q}s\lambda_{1}^2\varphi\,e^{2s\varphi}|a_3 v|^2dt\\
&\leq\,\mathcal{C}\E\int_{Q}\,e^{2s\varphi}|\mathbf{F}|^2dt+\Tilde{\mathcal{C}}\tau^3e^{\tau\mathcal{C}}\left(\|{ \partial_{\nu}w}\|^2_{L^2_\mathcal{F}(0,T:L_h^2(\Gamma))}+\|\xi\|^2_{H^1_{\mathcal{F}}(0,T;H_h^1(\Gamma))}\right).
\end{split}
\end{equation}
Now, by \eqref{eq:propertyonewight}, we have $\overline{\mathcal{M}}_0\times \left(t_0-\frac{\varepsilon}{\sqrt{\Tilde{N}}},t_0+\frac{\varepsilon}{\sqrt{\Tilde{N}}}\right)\subset G^{(4)}\cap Q$ and then, observing \eqref{eq:definitionchi}, we see that
\begin{equation}\label{eq:estimateoftheleftside}
    \begin{split} &J(v)+\E\int_{Q}s^{3}\lambda^{4}_{1}\varphi^{3}\,e^{2s\varphi}|v|^{2}\,dt+\sum_{i,j=1}^{n}\E\int_{Q_{ij}^\ast}s^{-1}e^{2s\varphi}|D_{ij}^2v|^2\, dt+\E\int_{Q}s\lambda_{1}^2\varphi\,e^{2s\varphi}|a_3 v|^2dt\\
     &\geq   \E\int_{Q}s^{3}\lambda^{4}_{1}\varphi^{3}\,e^{2s\varphi}|v|^{2}\,dt+\sum_{i=1}^{n}\E\int_{Q_{i}^{\ast}}s\varphi e^{2s\theta\varphi}\,|D_{i}v|^{2}\,dt+\sum_{i,j=1}^{n}\E\int_{Q_{ij}^{\ast}}s^{-1}e^{2s\varphi}|D^2_{ij}v|^2\,dt\\
     &\geq \mathcal{C}\tau^{-1}e^{2\mu_4 \tau}\int_{t_0-\frac{\varepsilon}{\sqrt{\Tilde{N}}}}^{t_0+\frac{\varepsilon}{\sqrt{\Tilde{N}}}}\left(\E\int_{\mathcal{M}_0}|w|^{2}+\sum_{i=1}^{n}\E\int_{(\mathcal{M}_{0})_{i}^{\ast}}\,|D_{i}w|^{2}+\sum_{i,j=1}^{n}\E\int_{(\mathcal{M}_0)_{ij}^\ast}|D^2_i w|^2\right)\,dt.
    \end{split}
\end{equation}

On the other hand, we can verify that there exists a sufficiently small $h_1$ such that
\begin{equation}\label{propchih}
\partial_{t}\chi=A_iD_i\chi=D_i^{2}\chi=0\quad (x,t)\in Q^{'}
\end{equation}
for all $h\in (0,h_1)$, where
\begin{equation*}
    Q^{'}:=\left\{(x,t)\in Q\;:\quad \varphi(x)\theta(t)>\mu_3+\frac{1}{2}(\mu_4-\mu_3)\right\}.
\end{equation*}
We apply Proposition \ref{pro:product} and \eqref{propchih} to yield 
\begin{equation}\label{eq:estimateoftheF}
    \begin{split}
&\E\int_{Q}e^{2s\varphi}|\mathbf{F}|^2\,dt\\
&\leq \mathcal{C}e^{2\tau\left(\mu_3+\frac{1}{2}(\mu_4-\mu_3)\right)}\E\int_{Q\setminus Q^{'}}\left(|w|^2+\sum_{i=1}^{n}\left[\left(1+\frac{h^4}{16}\right)|A_iD_iw|^2+\frac{h^4}{16}|D_i^2w|^2\right]\right)\,dt\\
&\leq \mathcal{C}e^{\tau (\mu_4+\mu_3)}\|w\|^2_{L_{\mathcal{F}}^2(0,T;H_h^2(\mathcal{M}))},
\end{split}
\end{equation}
where in the second term of the second line, we have used that
\begin{equation*}
    \E\int_{Q}|A_iD_iw|^2\,dt\leq \E\int_{Q}A_i(|D_iw|^2)\,dt\leq \E\int_{Q_i^{\ast}}|D_iw|^2\,dt,
\end{equation*}
which follows after the combination of \eqref{eq:inequalityAverange} and \eqref{eq:int:ave}. Therefore, from \eqref{eq:absorbthecarleman}, \eqref{eq:estimateoftheleftside} and \eqref{eq:estimateoftheF} it follows that
\begin{equation*}
\begin{split}
     \mathcal{C}\tau^{-1}e^{2\mu_4 \tau}&\E\int_{t_0-\frac{\varepsilon}{\sqrt{\Tilde{N}}}}^{t_0+\frac{\varepsilon}{\sqrt{\Tilde{N}}}}\left(\E\int_{\mathcal{M}_0}|w|^{2}+\sum_{i=1}^{n}\E\int_{(\mathcal{M}_{0})_{i}^{\ast}}\,|D_{i}w|^{2}+\sum_{i,j=1}^{n}\E\int_{(\mathcal{M}_0)^{\ast}_{ij}}|D^2_{ij} w|^2\right)\,dt\\
     &\leq\, \mathcal{C}e^{\tau (\mu_4+\mu_3)}\|w\|_{L_{\mathcal{F}}^2(0,T;H_h^2(\mathcal{M}))}+\Tilde{\mathcal{C}}\tau^3e^{\tau\mathcal{C}}\left(\|{ \partial_{\nu}w}\|^2_{L^2_\mathcal{F}(0,T:L_h^2(\Gamma))}+\|\xi\|^2_{H^1_{\mathcal{F}}(0,T;H_h^1(\Gamma))}\right),
\end{split}
\end{equation*}
which implies 
\begin{equation}\label{eq:endproof}
\begin{split}
     &\E\int_{t_0-\frac{\varepsilon}{\sqrt{\Tilde{N}}}}^{t_0+\frac{\varepsilon}{\sqrt{\Tilde{N}}}}\left(\E\int_{\mathcal{M}_0}|w|^{2}+\sum_{i=1}^{n}\E\int_{(\mathcal{M}_{0})_{i}^{\ast}}\,|D_{i}w|^{2}+\sum_{i,j=1}^{n}\E\int_{(\mathcal{M}_0)^{\ast}_{ij}}|D^2_{ij} w|^2\right)\,\,dt\\
     &\leq\, \mathcal{C}\tau e^{-\tau (\mu_4-\mu_3)}\|w\|^2_{L_{\mathcal{F}}^2(0,T;H_h^2(\mathcal{M}))}+\mathcal{C}\tau^4 e^{\mathcal{C}\tau}\left(\|{ \partial_{\nu}w}\|^2_{L^2_\mathcal{F}(0,T:L_h^2(\Gamma))}+\|\xi\|^2_{H^1_{\mathcal{F}}(0,T;H_h^1(\Gamma))}\right)\\
     &\leq  e^{-\frac{1}{2}\tau (\mu_4-\mu_3)}\|w\|^2_{L_{\mathcal{F}}^2(0,T;H_h^2(\mathcal{M}))}+ e^{2\mathcal{C}\tau}\left(\|{ \partial_{\nu}w}\|^2_{L^2_\mathcal{F}(0,T:L_h^2(\Gamma))}+\|\xi\|^2_{H^1_{\mathcal{F}}(0,T;H_h^1(\Gamma))}\right),
\end{split}
\end{equation}
where we have used that there exists a sufficiently large $\tau_1$ such that
\begin{equation*}
    \mathcal{C}\tau e^{-\tau(\mu_4-\mu_3)}\leq e^{-\frac{1}{2}\tau(\mu_4-\mu_3)}
\end{equation*}
for all $\tau_1<\tau <C\varepsilon_0h^{-1}$. 

Consider $f(\tau)=ae^{-\frac{1}{2}\tau(\mu_4-\mu_3)}+be^{2\mathcal{C}\tau}$ with $\tau_1 <\tau<\mathcal{C}\varepsilon_0h^{-1}$, $a=\|w\|^2_{L^2_{\mathcal{F}}{(0,T;H_h^2(\mathcal{M}))}}$ and $b=\|{ \partial_{\nu}w}\|^2_{L^2_{\mathcal{F}}(0,T;L_h^2(\Gamma))}+\|\xi\|^2_{H^1_{\mathcal{F}}(0,T;H_h^1(\Gamma))}.$ As $\mu_4-\mu_3>0$, then from Lemma 3.39 in \cite{LOPD:2023} the function $f$ satisfies the following.
\[
\min_{\tau\in [\tau_1,C\varepsilon_0h^{-1}]}f(\tau)\leq 2\max\left\{be^{2\mathcal{C}\tau_1},ae^{-\frac{1}{2h}C\varepsilon_0(\mu_4-\mu_3)},ae^{-\frac{1}{2}\tau_{\ast}(\mu_4-\mu_3)}\right\},
\]
for any $\displaystyle \tau_{\ast}\leq \frac{2}{4\mathcal{C}+(\mu_4-\mu_3)}\ln{\left(\frac{a}{b}\right)} $. Given that \eqref{eq:endproof} is valid for all $\tau \in [\tau_1,\mathcal{C}\varepsilon_0h^{-1}]$ and taking $\tau_\ast=\frac{2}{4\mathcal{C}+(\mu_4-\mu_3)}\ln{\left(\frac{a}{b}\right)}$, it follows in particular that: 
\begin{equation}\label{eq:Estimationfinally}
\begin{split}
    \|w\|^2_{L^2_{\mathcal{F}}\left((t_0-\frac{\varepsilon}{\sqrt{\Tilde{N}}},t_0+\frac{\varepsilon}{\sqrt{\Tilde{N}}};H_h^2(\mathcal{M}_0)\right)}\leq\; 2\max\left\{be^{2\mathcal{C}\tau_1},ae^{-\frac{1}{2h}\mathcal{C}\varepsilon_0(\mu_4-\mu_3)},a^{\kappa}b^{1-\kappa}\right\}
\end{split}
\end{equation}
with
\begin{equation*}
    \kappa:=\frac{4\mathcal{C}}{4\mathcal{C}+(\mu_4-\mu_3)}\in (0,1)
\end{equation*}
By the argument, we see that the constant of the right-hand side of \eqref{eq:Estimationfinally} is also independent of $t_0$. 

Since $t_0 \in [\sqrt{2}\varepsilon,\, T-\sqrt{2}\varepsilon]$, in \eqref{eq:Estimationfinally} we choose the points $t_0^{\,j} = \sqrt{2}\varepsilon + \frac{j\varepsilon}{\sqrt{\tilde{N}}}$ for $j = 0,1,2,\ldots,m$, where $m$ is chosen such that
\[
\sqrt{2}\varepsilon + \frac{m\varepsilon}{\sqrt{\tilde{N}}}
\le T-\sqrt{2}\varepsilon
\le \sqrt{2}\varepsilon + \frac{(m+1)\varepsilon}{\sqrt{\tilde{N}}}
\le T.
\]
Summing the corresponding estimates with respect to $j$ from $0$ to $m$, and then replacing $\varepsilon$ by $\sqrt{2}\varepsilon$ (which is admissible since $\varepsilon>0$ is arbitrary), we obtain the desired inequality \eqref{eq:teo:secondresult}, and this completes the proof of Theorem \ref{Teo:SecondResult}.
%%%%%%%%%%%%%%%
\section{Comments and concluding remarks}
{ 
In Theorem 1.1, we establish a Lipschitz stability result for an inverse source problem for semi-discrete stochastic parabolic equations in arbitrary spatial dimensions. The stability is obtained from interior observations together with the terminal value of the solution. In contrast with [20], the observation region is an arbitrary open subset of the spatial domain. Moreover, the regularity assumption on the coefficient $a_2$ is weaker in comparison with the existence literature. For this reasons, this problem is new. In Theorem 1.3, we prove a Hölder stability result for a Cauchy problem using the terminal value of the solution and the trace of the spatial derivatives on an arbitrary open subset of the boundary over a time interval. The novelty here lies in the treatment of arbitrary spatial dimensions and in allowing the boundary observation set to be a arbitrary open subset. From a methodological point of view, we derive three new Carleman estimates adapted to the different settings considered in the paper (see Theorems 1.5, 3.3 and 3.7). These estimates require new weight functions and refined analytical arguments in the discrete stochastic framework. In particular, they allow us to establish observability inequalities on arbitrary open subsets, which is not addressed in [20]. See, for instance, assumption 3.1 and the related discussion. Moreover, Theorem 3.3 and 3.7 generalize the results presented in [20] since the measurement on the boundary are arbitrary open set sampled on the mesh. 

Several variants of the inverse problem are related to the results presented in this manuscript. In particular, an interesting open problem is to obtain a stability result similar to Theorem~\ref{firstresulttheorem}, but for a drift term $f$ in system~\ref{systemrandomsource}, or, in a more general case, for both the drift and diffusion terms at the same time. In this direction, we can refer to the uniqueness results in~\cite{LZ:2024,QLu:2012} obtained in a continuous setting, where one of the main tools is the use of Carleman estimates. Although a similar analysis could be considered in our case, the arguments cannot be applied directly to the semi-discrete framework. In fact, besides Carleman estimates, some additional tools are needed in the continuous case, and it is not clear how to adapt them to the semi-discrete setting. This point is important because uniqueness results for inverse problems are not always true in the semi-discrete context. For example, Theorem~\ref{firstresulttheorem} implies a uniqueness result, while Remark~\ref{remark:nonuniquess} shows that uniqueness cannot be obtained from the corresponding stability result. This difference shows that uniqueness is a delicate issue in the semi-discrete case and makes the study of the results in~\cite{LZ:2024,QLu:2012} under this framework interesting.

Furthermore, the result presented in this work could also be applied to extend the recent result provided in \cite{WZZW:2025}. Indeed, in \cite{WZZW:2025} the authors established Lipschitz stability for an inverse problem of a fully-discrete stochastic hyperbolic equation in 1-D spatial dimension; and their results are based in a new Carleman estimate. 
}
%%%%%%%
\section*{Acknowledgments}
 R. Lecaros was partially supported by FONDECYT (Chile) Grant 1221892 and Proyecto Interno USM 2025 PI$\_$LIR$\_$25$\_$14. A. A. P\'erez acknowledges the support of Vicerrector\'ia de Investigaci\'on y postgrado, Universidad del B\'io-B\'io, project IN2450902 and FONDECYT Grant 11250805. M. F. Prado gratefully acknowledges the support of the Institutional Scholarship Fund of the Universidad de Valparaiso (FIB-UV) and Proyecto Interno USM 2025 PI$\_$LIR$\_$25$\_$14.
%%%%%%%%%%%%%%%%%%%%
\appendix
\section{Proof of semi-discrete Carleman estimate with homogeneous Dirichlet condition (Theorem \ref{theo:Carleman_datainterior}).}\label{Proof:CarlemanEstimateHomegeneuos}
%%%%%%%%%%%%%%%%%%%%%
%%%%%%%%%%%%%%%%%%%%%
In this section, we prove the semi-discrete Carleman estimate under homogeneous Dirichlet conditions. The argument follows a classical approach (see \cite{fursikov-1996}), based on conjugating the original operator with a suitably chosen exponential weight. Our estimate is closely related to the one presented in \cite{boyer-2010-1d-elliptic}, and we adopt its methodology whenever possible. The main difference is that we do not impose any boundary condition on the state or on the weight function, which leads to additional boundary terms that will be handled later on.

The proof is divided into three steps. First, in Section \ref{sec:conjugatedoperator}, we decompose the conjugated operator into its symmetric part $P_{1}$, antisymmetric part $P_{2}$, and an auxiliary term $R$, which helps simplify the calculations. Next, in Section \ref{sec:crossProduct}, we estimate the cross-inner products between these operators. Finally, in Section \ref{sec:returninbackvarible}, we return to the original variable to conclude the argument.

\subsection{Conjugated operator}\label{sec:conjugatedoperator}
%------------
For simplicity of notation, we write $r:=e^{s\varphi}$ and $\rho:=r^{-1}$, where our weight function is defined as $\varphi=e^{\lambda\psi}$, for $s\geq 1$, with $\psi\in C^{k}$ for $k$ sufficiently large and $\lambda\geq 1$. The proof of some technical results can be found in Section 5.  \\
For all $i{ =1,...,n.}$, let us consider the functions $\gamma_{i}$ such that $\mbox{reg}(\gamma)<\mbox{reg}^{0}$ and the following notation
\begin{equation*}
\nabla_{\gamma}f:=(\sqrt{\gamma_1}D_1 f_1,\cdots,\sqrt{\gamma_n}D_nf_n)\quad\text{and}\,\quad \Delta_{\gamma}f:=\sum_{i=1}^{n}\gamma_i \partial_{x_i}^{2}f. 
\end{equation*}
\par Let $\mathcal{P}(w):=\,dw-\sum_{i=1}^{n}D_{i}\left(\gamma_iD_{i}w\right)dt=fdt+gdB(t)$. With respect to this weight function, the first step is to consider the change of variable $w=\rho z$. Our first task is to split the conjugate operator 
$$r\mathcal{P}(\rho z)=rd(\rho z)-r\sum_{i=1}^{n}D_{i}\left(\gamma_{i}D_{i}(\rho z)\right)dt$$
into simple terms that we will estimate separately. Using It\^{o}'s formula  (cf. \citep[Theorem 4.18]{klebaner2012introduction}) and notice that $r\rho=1$, we obtain the following result
\begin{equation*}
    rd(\rho z)=dz+r\partial_t(\rho) z\,dt
\end{equation*}
 By \eqref{eq:difference:product} from Proposition \ref{pro:product}, we have
\begin{align*}
    D_{i}\left( \gamma_{i}D_{i}(\rho z)\right)
    =&D_{i}(\gamma_{i}D_{i}\rho)A_{i}^{2}z+A_{i}(\gamma_{i}D_{i}\rho)D_{i}A_{i}z+D_{i}A_{i}\rho A_{i}(\gamma_{i}D_{i}z)+A_{i}^{2}\rho D_{i}(\gamma_{i}D_{i}z).
\end{align*}
Then, applying Proposition \ref{pro:product} and using $A_{i}\gamma_{i}=\gamma_{i}+h\mathcal{O}(1)$, we have
\begin{align*}
    A_{i}(\gamma_{i}D_{i}z)&=A_{i}\gamma_{i}A_{i}D_{i}z+\frac{h^2}{4}D_{i}\gamma_{i}D_{i}^{2}z=\gamma_i\, A_iD_iz+\frac{h^2}{4}D_i\gamma_i\,D^2_iz+h\mathcal{O}(1)\,A_iD_iz,\\
    A_{i}(\gamma_{i}D_{i}\rho)&=A_{i}\gamma_{i}A_{i}D_{i}\rho+\frac{h^2}{4}D_{i}\gamma_{i}D_{i}^{2}\rho=\gamma_i\, A_iD_i\rho+\frac{h^2}{4}D_i\gamma_i\,D_i^2\rho+h\mathcal{O}(1)\,A_iD_i\rho,\\
    D_{i}(\gamma_{i}D_{i}\rho)&=D_{i}\gamma_{i}A_{i}D_{i}\rho+A_{i}\gamma_{i}D_{i}^{2}\rho=D_i\gamma_i\,A_iD_i\rho + \gamma_i\, D_i^2\rho+h\mathcal{O}(1)\,D_i^2\rho.
\end{align*}
Thus, the conjugate operator can be written as
 \begin{equation}\label{eq:conjugate}
\begin{split}
rd(\rho z)-r\sum_{i=1}^{n}D_{i}\left(\gamma_{i}D_{i}(\rho z)\right)dt+R_{h}z\,dt=&B_1z+B_2z\,dt+C_1z\,dt+C_2z\,dt+C_3z\,dt,
\end{split}
\end{equation}
where 
\begin{equation*}
    \begin{split}
        C_{1}z:=&-\sum_{i=1}^{n}rA_{i}^{2}\rho\,D_{i}(\gamma_{i}D_{i}z),\ C_{2}z:=-\sum_{i=1}^{n}\gamma_{i}rD_{i}^{2}\rho A_{i}^{2}z,\ C_{3}z:=r\partial_t(\rho) z
    \end{split}
\end{equation*}
\begin{equation*}
    \begin{split}
        B_1z:=& dz,\ B_2z=-2\sum_{i=1}^{n}\gamma_{i}rD_{i}A_{i}\rho D_{i}A_{i}z;
    \end{split}
\end{equation*}
and
\begin{equation*}
    \begin{split}
        R_{h}z:=\,&\sum_{i=1}^{n}\left(h\mathcal{O}(1)rD_{i}^{2}\rho +D_{i}\gamma_{i}rA_{i}D_{i}\rho\right)A_{i}^{2}z+\sum_{i=1}^{n}h_{i}\mathcal{O}(1)rD_{i}A_{i}\rho D_{i}A_{i}z\\
        &+\sum_{i=1}^{n}\frac{h^{2}}{4}D_{i}\gamma_{i}rD_{i}^{2}\rho D_{i}A_{i}z+\sum_{i=1}^{n}\frac{h^{2}}{4}D_{i}\gamma_i \,rD_{i}A_{i}\rho D_{i}^{2}z.
    \end{split} 
\end{equation*}
 Therefore, adding the terms $$C_4z:=\,-\frac{h^2}{4}\sum_{i=1}^{n}D_i(\gamma_i D_i(rD_i^2\rho)A_iz),\quad C_5z:=\,-\frac{h^2}{4}\sum_{i=1}^{n}D_i(D_i(\gamma_irD^2_i\rho)A_iz)$$ 
  $$ C_{6}z=-B_3z:=\,-2s\Delta_{\gamma}\varphi\, z$$
 in \eqref{eq:conjugate} we have the following identity in $Q$
\begin{equation}\label{eq:opconjugate}
    r\mathcal{P}(\rho z)=Cz\,dt+Bz-M_{h} z\,dt,
\end{equation}
where $Cz=C_{1}z+C_{2}z+C_3z+C_4z+C_5z+C_6z$, $Bz:=B_1z+(B_{2}z+B_3z)
\,dt$ and $M_{h}z=C_4z+C_5z+R_hz$.\\
Thus, our next task is to estimate the cross-product $\displaystyle 2\E \int_Q CzBz =2\E \sum_{i=1}^{6}\sum_{j=1}^{3}\int_Q C_{i}zB_{j}z$ and to bound the left-hand side of \eqref{eq:opconjugate} with respect to the term $M_h z$. Indeed, 
We notice that from \eqref{eq:opconjugate}, it holds that
\begin{equation}\label{eq:estimationP(q)}
    2r\mathcal{P}(q)\,Cz=2|Cz|^2dt+2Cz\,Bz-2M_hz\,Cz\,dt.
\end{equation}
Hence, combining the definition of $r\mathcal{P}(q)$ with \eqref{eq:estimationP(q)}, it follows that
\begin{equation*}
    \E \int_Q 2rf\,Cz\,dt=2\E \int_Q |Cz|^2dt+2\E \int_{Q}CzBz-2\E\int_{Q}
    M_hz\,Cz\,dt.
\end{equation*}
Using the Young inequality, we notice that
\begin{equation}\label{eq:Estirf}
    2\E \int_0^T |rf|^2\,dt \geq \E\int_{Q}|Cz|^2dt+ 2\E\int_{Q} Cz\,Bz -2\E \int_{Q}|M_{h}z|^2\,dt.
\end{equation}
%%%%%%%%%%%%%%%%%%%%%%%%%%%%%%%%%%%%%%%%%%%%%%%%%%%%%%%%%%%%%%%%%%%%%%%%%%%%%%%%%%%%%%%%%%%%%%%%%%%%%%%%%%%%%%%%%%%%%%%%%%%%%%%%%%%%%%%%%%%%%%%%%%%%%%%%%%%%
The next step is to provide an estimate for the right-hand side of \eqref{eq:Estirf}.
%Applying Young's inequality and the triangular inequality, we can bound $r\mathcal{P}(q)$ in $Q$ as follows.
\begin{remark}\label{remark:change}
The decomposition of \eqref{eq:opconjugate} differs from that presented in \cite{LPP:2025}, not only due to the sign change, but also because the corrector, which is typically added when the dimension is greater than one, is included as an additional term rather than as a component of $M_hz$. As a result, the estimate for the cross-product includes an additional term that must be considered.
Moreover, in the estimate \eqref{eq:Estirf}, Young's inequality is applied in a way that preserves the term $|C(z)|^2$, which will play a crucial role in the final estimate of the Carleman inequality. These are some of the reasons for the study of this new configuration in order to obtain the Carleman inequality.
\end{remark}
\subsection{An estimate for the cross-product}\label{sec:crossProduct}
%%%%%%%%%%%%%%%%%%%%%%%%%%%%
Recall that 
\begin{equation}\label{conmu}
    2\E \int_Q CzBz =2\E \sum_{i=1}^{6}\sum_{j=1}^{3}\int_Q C_{i}zB_{j}z:=\,\sum_{i=1}^{6}\sum_{j=1}^{3}I_{ij}. 
\end{equation}

The procedure for estimating the product \eqref{conmu} is analogous to that in \cite{LPP:2025}, except for a change of sign and consideration of Remark \ref{remark:change}. Consequently, we divide the estimate into three groups: terms involving the differential $dz$, products of the additional terms, and terms involving the differential $dt$.

We begin with the terms involving the differential $dt$. Specifically, consider the terms $I_i$ for $i=1,2,3,\dots,5$, whose estimates can be found in \cite{LPP:2025}, up to a sign change. For the additional term $I_6$, which does not appear in \cite{LPP:2025}, its estimate is provided in Appendix \ref{sec:proof:carleman}. Consequently, we obtain the following preliminary estimate:
\begin{lemma}\label{lem:dz}
(\textit{Terms involving differential $dz$.}) For $\displaystyle \max_{t\in[0,T]}\{\theta(t)\}\tau h\leq 1$, we have
\begin{equation*}
\begin{split}
    \sum_{i=1}^{6}I_{i1}\geq\,&-\sum_{i=1}^{n} \E\int_{Q_i^{\ast}}\gamma_i|D_i(dz)|^2+\E\int_{Q}(s^2\lambda^2\varphi^2+s\lambda^2\varphi)|\nabla_{\gamma} \psi|^2|dz|^2-X_{1}-Y_{1}
\end{split}
\end{equation*}
where
\begin{equation*}
\begin{split}
    X_{1}:=\,&\sum_{i=1}^{n}\left(\E\int_{Q_{i}^{\ast}}\mathcal{O}_{\lambda}((sh)^2)\,|D_i(dz)|^2+\E\int_{Q_{i}^{\ast}}T\theta(t)\mathcal{O}_{\lambda}((sh)^2)\,|D_iz|^2\,dt\right)\\
    &+\E\int_{Q}(\mathcal{O}_{\lambda}((sh)^2)+T\mathcal{O}_{\lambda}(1)+s\lambda\varphi\mathcal{O}(1))\,|dz|^2\\
    &+\E\int_{Q} T(\theta\mathcal{O}_{\lambda}((sh)^2)-2s\varphi(\lambda^3\beta|\nabla_{\gamma}\psi|^2+\lambda^2\mathcal{O}(1)))+\mathcal{O}_{\lambda}(1))\,|z|^2dt
\end{split}
\end{equation*}
and
\begin{equation*}
\begin{split}
    Y_{1}:=\,&-\left.\E\int_{\mathcal{M}}[s^2\mathcal{O}_{\lambda}(1)+2s(\lambda^2\varphi|\nabla_{\gamma}\psi|^2+\lambda\varphi\mathcal{O}(1))]|z|^2\right|_0^T+\sum_{i=1}^{n}\left.\E\int_{\mathcal{M}_{i}}\gamma_i|D_iz|^2\right|_0^T\\
    &\sum_{i=1}^{n}\E\left. \int_{\mathcal{M}_i^*}\mathcal{O}_{\lambda}((sh)^2)\,|D_iz|^2\right|_0^T+\E\left.\int_{\mathcal{M}}(\mathcal{O}_{\lambda}((sh)^2)+T\theta^2\mathcal{O}(1))\,|z|^2\right|_0^T.
\end{split}
\end{equation*}
\end{lemma}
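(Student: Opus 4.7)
The plan is to compute each cross-term $I_{i1}=2\mathbb{E}\int_Q C_i z\cdot dz$ for $i=1,\dots,6$ individually, then collect the dominant positive contributions on one side and group the remaining terms into $X_1$ (absorbable errors) and $Y_1$ (genuine boundary/time-boundary terms at $t=0,T$). Throughout, the two main analytic tools are discrete integration by parts (Proposition \ref{prop:integralbyparts}) together with the discrete product rule (Proposition \ref{pro:product}), and It\^o's formula applied to $|z|^2$ in the form $2z\,dz = d(|z|^2) - (dz)^2$, which is what converts pointwise products $z\cdot dz$ into boundary-in-time contributions (the $Y_1$ piece) plus an It\^o correction (absorbed in $X_1$).

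First I would treat $I_{11}$. Integrating by parts in space, $2\mathbb{E}\int_Q (-rA_i^2\rho)D_i(\gamma_i D_i z)\,dz$ yields, up to homogeneous boundary contributions that vanish because $z=0$ on $\partial\mathcal{M}$, the term $2\mathbb{E}\int_{Q_i^\ast}A_i(rA_i^2\rho)\gamma_i D_i z\cdot D_i(dz)$. Using $A_i(rA_i^2\rho) = 1 + \mathcal{O}_\lambda((sh)^2)$ (a consequence of Proposition \ref{prop:weight} with $\alpha=0$), this gives $2\mathbb{E}\int_{Q_i^\ast}\gamma_i D_i z\,D_i(dz)$ modulo $\mathcal{O}_\lambda((sh)^2)$ errors. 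Writing $2D_i z\cdot D_i(dz) = d(|D_iz|^2) - |D_i(dz)|^2$ via It\^o's formula produces the negative term $-\sum_i\mathbb{E}\int_{Q_i^\ast}\gamma_i|D_i(dz)|^2$ kept on the left, the time-boundary term $\sum_i\mathbb{E}\int_{\mathcal{M}_i^\ast}\gamma_i|D_iz|^2\big|_0^T$ in $Y_1$, and the small errors in $X_1$.

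For $I_{21}$ and $I_{61}$, which contribute the leading ponderous weight factor, I would use Itô on $2z\,dz = d(|z|^2) - (dz)^2$ and the identity $C_2 z + C_6 z = -\sum_i \gamma_i r D_i^2\rho A_i^2 z - 2s\Delta_\gamma\varphi\, z$. Applying \eqref{eq:weightaprox} gives $r D_i^2\rho = s^2\lambda^2\varphi^2(\partial_i\psi)^2 + s\lambda^2\varphi(\partial_i\psi)^2 + s\lambda\varphi\mathcal{O}(1) + \mathcal{O}_\lambda((sh)^2)$, and after combining with $C_6 z$ and using \eqref{eq:averengeanddifference} to replace $A_i^2 z$ by $z$ up to $h^2 D_i^2 z$ corrections, the coefficient of $|dz|$-integrand becomes $(s^2\lambda^2\varphi^2 + s\lambda^2\varphi)|\nabla_\gamma\psi|^2$ as claimed. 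The It\^o correction $-(dz)^2$ together with the time-boundary terms produces the first line of $Y_1$, while the lower-order weight contributions feed into $X_1$.

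The terms $I_{31}$, $I_{41}$ and $I_{51}$ I would handle last. For $I_{31}$, $C_3 z = r\partial_t\rho\, z = -s\varphi_t z$ is of order $s\lambda\varphi\mathcal{O}(1)$ by $|\theta_t|\le C$, so $2\mathbb{E}\int_Q C_3 z\,dz$ is directly absorbed into the $s\lambda\varphi\mathcal{O}(1)|dz|^2$ and $|z|^2$ slots of $X_1$ after Young's inequality. For $I_{41}$ and $I_{51}$, the $h^2$ prefactor together with the weight estimates of Theorem \ref{theo:weight:estimates} shows these contributions are of order $\mathcal{O}_\lambda((sh)^2)$ in both the $|D_iz|^2$ and $|z|^2$ integrals and, after discrete integration by parts (which now does produce time-boundary terms when It\^o is applied), give exactly the $\mathcal{O}_\lambda((sh)^2)$ pieces appearing in $X_1$ and $Y_1$.

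The main technical obstacle is bookkeeping: tracking precisely which terms produce genuine time-boundary contributions (only those involving $z\cdot dz$ or $D_iz\cdot D_i(dz)$, via It\^o), versus those producing only spatial integrals, and ensuring that every $\mathcal{O}_\lambda((sh)^2)$ remainder from Proposition \ref{prop:weight} and Theorem \ref{theo:weight:estimates} is compatible with the stated form of $X_1$ under the hypothesis $\max_{t\in[0,T]}\theta(t)\,\tau h\le 1$. Once this accounting is done carefully, the sign of the leading term $(s^2\lambda^2\varphi^2+s\lambda^2\varphi)|\nabla_\gamma\psi|^2|dz|^2$ comes from the competition between $C_2 z$ (producing $+s^2\lambda^2\varphi^2|\nabla_\gamma\psi|^2$) and $C_6 z = -2s\Delta_\gamma\varphi\,z$, and the stated inequality follows.
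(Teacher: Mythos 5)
Your overall strategy coincides with the paper's: the paper treats $I_{11},\dots,I_{51}$ by invoking the corresponding estimates of \cite{EstimateCarlemanStochasticControl} (up to a sign change) and only computes the new term $I_{61}$ in Appendix \ref{sec:proof:carleman}, precisely by the mechanism you describe, namely It\^o's formula $2z\,dz=d(|z|^2)-(dz)^2$ combined with discrete integration by parts and the weight estimates of Appendix \ref{sec:weight:function}. Your sketch simply carries out these computations term by term instead of importing them, and the architecture is the right one: the leading positivity comes from $C_2$ and $C_6$, the terms $-\sum_i\E\int_{Q_i^{\ast}}\gamma_i|D_i(dz)|^2$ and $\sum_i\E\int_{\mathcal{M}_i^{\ast}}\gamma_i|D_iz|^2\big|_0^T$ come from $C_1$ after integrating by parts in space (the spatial boundary terms vanish since $z=0$, hence $dz=0$, on $\partial\mathcal{M}$), and everything else is bookkept into $X_1$ and $Y_1$.

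Two points need repair. First, the expansion of $rD_i^2\rho$: the subleading terms carry a minus sign, $rD_i^2\rho=s^2\lambda^2\varphi^2(\partial_i\psi)^2-s\lambda^2\varphi(\partial_i\psi)^2-s\lambda\varphi\,\partial_i^2\psi+\mathcal{O}_\lambda(\cdot)$. It is exactly this $-s\lambda^2\varphi|\nabla_\gamma\psi|^2$ contribution of $C_2$, overcompensated by the $+2s\lambda^2\varphi|\nabla_\gamma\psi|^2$ coming from $C_6z=-2s\Delta_\gamma\varphi\,z$, that yields the stated coefficient $(s^2\lambda^2\varphi^2+s\lambda^2\varphi)|\nabla_\gamma\psi|^2$; with your plus signs the combination would not give the coefficient ``as claimed'' but a larger one (harmless for the lower bound, but the bookkeeping as written is inconsistent). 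Second, $I_{31}$ cannot be disposed of by a pointwise Young inequality against $dz$: $\E\int_Q C_3z\,dz$ involves a stochastic integral and must be treated exactly as you treat $I_{61}$, via It\^o's formula. Doing so produces, besides the $s\lambda\varphi\,\mathcal{O}(1)|dz|^2$ and $|z|^2\,dt$ bulk contributions, a time-boundary term at $t=T$ (the one at $t=0$ vanishes because $z(0)=0$), which must be booked in $Y_1$ rather than in $X_1$. With these two corrections your sketch matches the paper's accounting.
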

%%%%%%%%%%%%%%%%%%%%%%%%%%%%%%%%%%%%%%%%%%%%%%%%%%%%%%%%%%%%%%%%%%%%%%%%%%%%%%%%%%%%%%%%%%%%%%%%%%%%%%%%%%%%%%%%%%%%%%%%%%%%%%%%%%%%%%%%%%%%%%%%%%%%%%%%%%%%%%%%%%%%%%%%%%%%%%%%%%%%%%%%
Now, let us consider the products of the additional terms. In this case, the new terms $I_{62}$ and $I_{63}$ appear, whose estimates are provided in Appendix \ref{sec:proof:carleman}. For the remaining terms, the estimates remain unchanged compared to \cite{LPP:2025}, since the weight functions still satisfy the same properties, namely $|\theta(t)| \leq C$ and $|\theta_t(t)| \leq C$ for some constant $C>0$ and all $t \in [0,T]$. Therefore, we obtain the following estimate:
\begin{lemma}\label{lem:AdditionalsTerms}
    (\textit{product of the additional terms.}) For $\displaystyle \max_{t\in[0,T]}\{\theta(t)\}\tau h\leq 1$, we obtain
    \begin{equation*}
        \sum_{i=4}^{6}I_{i2}+\sum_{i=1}^{6}I_{i3}\geq\, \sum_{i=1}^{n}\E\int_{Q_{i}^{\ast}}4s\lambda^2\varphi\gamma_i|\nabla_{\gamma}\psi|^2\,|D_iz|^2\,dt-\E\int_{Q}4s^3\lambda^4\varphi^3|\nabla_{\gamma}\psi|^4\,|z|^2\,dt-X_3
    \end{equation*}
where
\begin{align*}
        &X_3:=\, \E\int_{Q} s|\mathcal{O}_{\lambda}((sh)^2)|\,|z|^2\,dt-E\int_{Q}s\mathcal{O}_{\lambda}((sh)^2)\,|z|^2\,dt+\sum_{i=1}^{n} \E\int_{Q_i^{\ast}}s|\mathcal{O}_{\lambda}((sh)^2)|\,|D_iz|^2\,dt\\
        &+\sum_{i=1}^{n} \E\int_{Q_i^{\ast}}s\mathcal{O}_{\lambda}((sh)^2)\,|D_iz|^2\,dt +\sum_{i=1}^{n}\E\int_{Q_i^{\ast}}(4s\lambda\Delta_{\gamma}\psi\, \varphi \gamma_i+s\mathcal{O}_{\lambda}(h+(sh)^2))\,|D_iz|^2\,dt\\
        &+\E\int_{Q}s\mathcal{O}_{\lambda}(1)\,|z|^2\,dt+\E\int_{Q}(s^3\lambda^3\varphi^3\mathcal{O}(1)+s^2\mathcal{O}_{\lambda}(1)+s^3\mathcal{O}_{\lambda}((sh)^2))\,|z|^2\,dt\\
        &+\E\int_{Q} (Cs\lambda^2T\theta^2\varphi|\nabla_{\gamma}\psi|^2+s\lambda T\theta^2\varphi\mathcal{O}(1))\,|z|^2\,dt {-2\E\int_{Q}s^2\lambda^4\varphi^2|\nabla_{\gamma}\psi|^4|z|^2\,dt}\\
        &{+2\E\int_{Q}(s^2\lambda^3\varphi^2|\nabla_{\gamma}\psi|^4-4s^2\lambda^2|\mathcal{O}(1)|^2-s\mathcal{O}_{\lambda}(1)((sh)^2))\,|z|^2\,dt}.
\end{align*}
\end{lemma}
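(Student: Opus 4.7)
The plan is to expand each cross-product $I_{ij}=2\E\int_Q C_iz\,B_jz\,dt$ appearing in $\sum_{i=4}^{6}I_{i2}+\sum_{i=1}^{6}I_{i3}$ using the discrete integration-by-parts formulas of Proposition~\ref{prop:integralbyparts}, the discrete product rule of Proposition~\ref{pro:product}, and the weight-function expansions gathered in Appendix~\ref{sec:weight:function} (in particular Proposition~\ref{prop:weight} and Theorem~\ref{theo:weight:estimates}). Because the weight retains the structure $s(t)\varphi(x)$ with the uniform bounds $|\theta|,|\theta_t|\le C$, the cross-products $I_{i2}$ with $i=4,5$ and $I_{i3}$ with $i=1,\ldots,5$ can be computed exactly as in \cite{EstimateCarlemanStochasticControl}, up to the sign conventions dictated by our decomposition \eqref{eq:opconjugate}; those estimates I would import verbatim, leaving only $I_{62}$ and $I_{63}$ to be handled by hand, as flagged in Remark~\ref{remark:change}.

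\textbf{Identification of the principal terms.} The positive lower bound $\sum_{i}4s\lambda^{2}\varphi\gamma_i|\nabla_\gamma\psi|^{2}|D_iz|^{2}$ would be extracted from
\begin{equation*}
I_{13}=-4s\,\E\sum_{i=1}^{n}\int_Q rA_i^{2}\rho\,\Delta_\gamma\varphi\,z\,D_i(\gamma_iD_iz)\,dt.
\end{equation*}
Inserting $rA_i^{2}\rho=1+\mathcal{O}_\lambda((sh)^{2})$, I integrate by parts in $e_i$ (boundary terms vanish because the Dirichlet data $w=0$ forces $z=0$ on $\partial\mathcal{M}$), then use $D_i(\Delta_\gamma\varphi\,z)=A_i(\Delta_\gamma\varphi)D_iz+D_i(\Delta_\gamma\varphi)A_iz$ and expand $A_i(\Delta_\gamma\varphi)=\lambda^{2}\varphi|\nabla_\gamma\psi|^{2}+\lambda\varphi\,\mathcal{O}(1)+\mathcal{O}_\lambda((sh)^{2})$; the leading block produces the good term, the lower-order cross pieces feeding the $4s\lambda\Delta_\gamma\psi\,\varphi\gamma_i$ and $s\mathcal{O}_\lambda(h+(sh)^{2})$ entries of $X_3$. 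The negative principal term $-4s^{3}\lambda^{4}\varphi^{3}|\nabla_\gamma\psi|^{4}|z|^{2}$ is extracted from $I_{23}=-4s\,\E\sum_i\int_Q\gamma_i\,rD_i^{2}\rho\,\Delta_\gamma\varphi\,A_i^{2}z\,z\,dt$: substituting $rD_i^{2}\rho=s^{2}\lambda^{2}\varphi^{2}(\partial_i\psi)^{2}+s\lambda^{2}\varphi\mathcal{O}(1)+s\mathcal{O}_\lambda(1)$ from Lemma~\ref{lem:derivative:wrt:x}, using $A_i^{2}z=z+\tfrac{h^{2}}{4}D_i^{2}z$ from \eqref{eq:averengeanddifference}, and summing via $\sum_i\gamma_i(\partial_i\psi)^{2}=|\nabla_\gamma\psi|^{2}$ produces the $|\nabla_\gamma\psi|^{4}$ factor at leading order.

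\textbf{New contributions and main obstacle.} The genuinely new pieces, absent in \cite{EstimateCarlemanStochasticControl}, are $I_{63}=-2\,\E\int_Q|B_3z|^{2}\,dt=-8\,\E\int_Q s^{2}(\Delta_\gamma\varphi)^{2}|z|^{2}\,dt$ and $I_{62}=8s\,\E\int_Q\Delta_\gamma\varphi\sum_i\gamma_i(rD_iA_i\rho)\,z\,D_iA_iz\,dt$. Squaring $\Delta_\gamma\varphi=\lambda^{2}\varphi|\nabla_\gamma\psi|^{2}+\lambda\varphi\sum_i\gamma_i\partial_i^{2}\psi$ in $I_{63}$ produces a leading $-8s^{2}\lambda^{4}\varphi^{2}|\nabla_\gamma\psi|^{4}|z|^{2}$ block whose recombination with the other two principal quadratic interactions yields precisely the $-2s^{2}\lambda^{4}\varphi^{2}|\nabla_\gamma\psi|^{4}$, $+2s^{2}\lambda^{3}\varphi^{2}|\nabla_\gamma\psi|^{4}$ and $-4s^{2}\lambda^{2}|\mathcal{O}(1)|^{2}$ entries displayed in $X_3$, the remaining $(sh)^{2}$-corrections falling into the $s\mathcal{O}_\lambda((sh)^{2})|z|^{2}$ slot. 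For $I_{62}$, Proposition~\ref{prop:weight} gives $rD_iA_i\rho=-s\lambda\varphi\partial_i\psi+s\mathcal{O}_\lambda((sh)^{2})$, and after symmetrizing via $A_iz\,D_iA_iz=\tfrac{1}{2}D_iA_i(|z|^{2})$ and one further integration by parts, the resulting expression reduces to error terms absorbed into the $s\mathcal{O}_\lambda(h+(sh)^{2})|D_iz|^{2}$ and $s\mathcal{O}_\lambda((sh)^{2})|z|^{2}$ components of $X_3$. The main obstacle is not any single calculation but the systematic bookkeeping: every time a weight factor is averaged or differenced an $\mathcal{O}_\lambda((sh)^{2})$ remainder is produced, and each must be verified to fit inside $X_3$ under the smallness assumption $\max_{t\in[0,T]}\theta(t)\,\tau h\le 1$, while confirming that the integrations by parts in $I_{13}$ and $I_{62}$ produce no boundary contributions beyond those accounted for by $z|_{\partial\mathcal{M}}=0$.
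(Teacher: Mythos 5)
Your overall strategy coincides with the paper's: all cross-products except $I_{62}$ and $I_{63}$ are imported from \cite{EstimateCarlemanStochasticControl} (the weight still satisfies $|\theta|,|\theta_t|\le C$), and only the two new products involving $C_6z=-2s\Delta_\gamma\varphi\,z$ are computed by hand, exactly as the paper does in Appendix \ref{sec:proof:carleman}. Your treatment of $I_{63}$ is also in line with the paper's (expand the square of $\Delta_\gamma\varphi$; the resulting $s^2$-blocks are recorded in $X_3$ and later absorbed by the $s^3\lambda^4\varphi^3$ term).

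The genuine gap is in your claim that $I_{62}$ ``reduces to error terms absorbed into the $s\mathcal{O}_\lambda(h+(sh)^2)|D_iz|^2$ and $s\mathcal{O}_\lambda((sh)^2)|z|^2$ components of $X_3$.'' It does not. With $\beta_{62}^i=4s\gamma_i\Delta_\gamma\varphi\,rD_iA_i\rho$, the weight estimates give $rD_iA_i\rho=-s\lambda\varphi\partial_i\psi+s\mathcal{O}_\lambda((sh)^2)$ and $\Delta_\gamma\varphi=\lambda^2\varphi|\nabla_\gamma\psi|^2+\lambda\varphi\mathcal{O}(1)$, so $\beta_{62}^i$ is of size $s^2\lambda^3\varphi^2$; after the two discrete integrations by parts (\eqref{eq:int:dif} followed by \eqref{eq:int:ave}) the surviving bulk term is $-\tfrac12\E\int_Q A_iD_i(\beta_{62}^i)|z|^2$, and differencing $\beta_{62}^i$ produces an extra factor $\lambda$, not a factor $h$. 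Hence $I_{62}$ contributes genuine $s^2$-order bulk terms; the paper obtains
\begin{equation*}
I_{62}\;\ge\;2\,\E\int_Q\bigl(s^2\lambda^3\varphi^2+s^2\lambda^4\varphi^2\bigr)|\nabla_{\gamma}\psi|^4|z|^2\,dt+\E\int_Q s\,\mathcal{O}_\lambda(1)(sh)^2,
\end{equation*}
and it is precisely these contributions, combined with the negative blocks coming from $I_{63}$, that produce the $s^2\lambda^4$, $s^2\lambda^3$ and $s^2\lambda^2$ entries of $X_3$. Your bookkeeping cannot recover those entries from $I_{63}$ alone: expanding $|\Delta_\gamma\varphi|^2$ in $I_{63}$ yields three blocks all multiplied by a negative constant, so the displayed combination (in particular the $\lambda^4$ block with coefficient $-2$ rather than $-8$, and a $\lambda^3$ block of opposite sign) is unreachable without the positive $s^2\lambda^{3},s^2\lambda^4$ terms from $I_{62}$ that you discarded. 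A second, smaller slip feeds this error: the symmetrization identity you invoke should read $A_i^2z\,D_iA_iz=\tfrac12 D_iA_i(|z|^2)$ (full shifts), not $A_iz\,D_iA_iz=\tfrac12 D_iA_i(|z|^2)$, and the factor $z$ in the integrand differs from $A_i^2z$ by $\tfrac{h^2}{4}D_i^2z$ via \eqref{eq:averengeanddifference}; the paper avoids this by integrating by parts directly on $z\,D_i(A_iz)$ and then on $A_i(\beta_{62}^i)D_i(|z|^2)$, using $z=0$ on $\partial_iQ$ to kill the boundary terms. Correcting the size of $I_{62}$ and redoing the recombination restores the stated $X_3$ and the lemma.
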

Finally, the terms $I_{12}$, $I_{22}$ and $I_{32}$ behave analogously to those in the deterministic setting studied in \cite{boyer-2014}, as the time variable does not play a significant role. For this reason, we omit a detailed proof of the following lemma, since the only difference with respect to \cite{boyer-2014} lies in our notation.
\begin{lemma}\label{lem:dt}
(\textit{Terms involving differential $dt$.}) For $\displaystyle \max_{t\in[0,T]}\{\theta(t)\}\tau h\leq 1$, we have the following inequality
\begin{equation*}
    \sum_{i=1}^{3}I_{i2}\geq\,\E\int_{Q} 6s^3\lambda^4\varphi^3|\nabla_{\gamma}\psi|^4\,|z|^2\,dt-\sum_{i=1}^{n}\E\int_{Q_{i}^{\ast}}2s\lambda^2\varphi \gamma_{i}|\nabla_{\gamma}\psi|^2\,|D_iz|^2\,dt-X_2-Y_2
\end{equation*}
where
\begin{align*}
&X_{2}:=\,\sum_{i=1}^{n}\E\int_{Q}|s\lambda\varphi\mathcal{O}(1)+\mathcal{O}_{\lambda}(sh)+s\mathcal{O}_{\lambda}(sh)+s\mathcal{O}_{\lambda}((sh)^{2})|\,|D_{i}A_{i}z|^{2}\,dt\\
    &+\sum_{i=1}^{n}\E\int_{Q}h^{2}|\mathcal{O}_{\lambda}(sh)|\,|D_{i}^{2}z|^{2}\,dt+\sum_{i\ne j=1}^{n}\E\int_{Q_{ij}^{*}}\left|h\lambda\mathcal{O}(sh)+h\mathcal{O}_{\lambda}((sh)^{2})\right|\,|D_{ij}^{2}z|^{2}\,dt\\
    &+\sum_{i=1}^{n}\E\int_{Q^{\ast}_{i}}|s\lambda\varphi\mathcal{O}(1)+s\mathcal{O}_{\lambda}(sh)+h\mathcal{O}_{\lambda}(sh)+s\mathcal{O}_{\lambda}((sh)^{2})+\mathcal{O}_{\lambda}((sh)^2)|\, |D_{i}z|^{2}\\
    &+\E\int_{Q}(s^2\lambda^3\varphi^2\mathcal{O}(1)+s^2\mathcal{O}_{\lambda}(1)+s^2T\theta\mathcal{O}_{\lambda}(1)+s^3\mathcal{O}_{\lambda}((sh)^2))|z|^2\\
    &+\sum_{i=1}^{n}\E\int_{Q}\left|h\lambda\mathcal{O}(sh)+h\mathcal{O}_{\lambda}((sh)^{2})\right|\,|D_{i}^{2}z|^{2}\,dt.
\end{align*}
and
\begin{align*}
Y_{2}:=\,&\sum_{i=1}^{n}\E\int_{\partial_{i}Q}\left(-2s\lambda\varphi(\gamma_i)^2\partial_{i}\psi+s\mathcal{O}_{\lambda}(sh)+h\mathcal{O}_{\lambda}(sh)\right)t_{r}^{i}(|D_{i}z|^{2})\nu_{i}\,dt+\\
        &+\sum_{i=1}^{n}\E\int_{\partial_iQ}\mathcal{O}_{\lambda}(sh)t_r^{i}(|D_iz|^2)\,dt+\sum_{i,j=1}^{n}\E\int_{\partial_{i}Q}s\mathcal{O}_{\lambda}((sh)^2)\,t_{r}^{i}(|D_{i}z|^{2})\nu_{i}\,dt
\end{align*}
\end{lemma}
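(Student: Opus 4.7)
The starting observation is that $B_{2}z=-2\sum_{j=1}^{n}\gamma_{j}rD_{j}A_{j}\rho\, D_{j}A_{j}z$ involves no differential $dz$ and no $dB(t)$. Consequently, for each $i\in\{1,2,3\}$ the inner product $2\E\int_{Q}C_{i}z\cdot B_{2}z\,dt$ reduces to a purely spatial computation under the expectation, so the argument is deterministic in flavour and parallels the discrete elliptic estimate in \cite{boyer-2014}. The plan is therefore to treat $I_{12}$, $I_{22}$, $I_{32}$ in turn, using systematically Proposition \ref{prop:integralbyparts} for discrete integration by parts, Proposition \ref{pro:product} for the discrete product/averaging identities, and the weight-function estimates of Proposition \ref{prop:weight}, Lemma \ref{lem:difference:average:2} and Theorem \ref{theo:weight:estimates} to expand all products of the form $r^{2}(\partial^{\alpha}\rho)(\partial^{\beta}\rho)$ into a principal part plus $\mathcal{O}_{\lambda}((sh)^{k})$ remainders.

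The main contribution comes from $I_{12}=-2\E\int_{Q}\sum_{i}rA_{i}^{2}\rho\, D_{i}(\gamma_{i}D_{i}z)\cdot B_{2}z\,dt$. Integrating by parts in the direction $e_{i}$ via \eqref{eq:int:dif}, the second difference on $z$ is transferred onto the weights. The principal symbol of the resulting kernel is, by Theorem \ref{theo:weight:estimates}, of the form $2\gamma_{i}\gamma_{j}(s\lambda\varphi)^{2}\partial_{i}\psi\,\partial_{j}\psi$ (up to lower-order $\lambda^{-1}$ corrections and $(sh)^{2}$ remainders). Collecting the diagonal contribution $i=j$ produces the negative term $-\sum_{i}2s\lambda^{2}\varphi\gamma_{i}|\nabla_{\gamma}\psi|^{2}|D_{i}z|^{2}$ on $Q_{i}^{\ast}$, while rearranging the remaining brackets using \eqref{eq:averengeanddifference} and integrating by parts once more in the $D_{j}A_{j}$ direction turns the cross terms into a bulk term proportional to $s^{3}\lambda^{4}\varphi^{3}|\nabla_{\gamma}\psi|^{4}|z|^{2}$; the boundary contribution of that last integration by parts yields the leading term $-2s\lambda\varphi\gamma_{i}^{2}\partial_{i}\psi\cdot t_{r}^{i}(|D_{i}z|^{2})\nu_{i}$ that appears in $Y_{2}$. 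The remaining bulk terms scale as described by $X_{2}$.

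For $I_{22}=2\E\int_{Q}\sum_{i}\gamma_{i}rD_{i}^{2}\rho\,A_{i}^{2}z\cdot B_{2}z\,dt$ one uses $A_{i}^{2}z=z+\tfrac{h^{2}}{4}D_{i}^{2}z$ from \eqref{eq:averengeanddifference}: the $z$-piece, after integrating $D_{j}A_{j}$ by parts via \eqref{eq:int:ave} so that $D_{j}A_{j}$ falls on the weight product $\gamma_{i}\gamma_{j}rD_{i}^{2}\rho\,rD_{j}A_{j}\rho$, contributes a further $+4s^{3}\lambda^{4}\varphi^{3}|\nabla_{\gamma}\psi|^{4}|z|^{2}$ at leading order by Theorem \ref{theo:weight:estimates}, combining with $I_{12}$ to give the announced coefficient $6$; the $\tfrac{h^{2}}{4}D_{i}^{2}z$ piece, together with the analogous $D_{j}^{2}z$ contributions, is absorbed in the $h^{2}|\mathcal{O}_{\lambda}(sh)||D_{i}^{2}z|^{2}$ and mixed $|D_{ij}^{2}z|^{2}$ entries of $X_{2}$. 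The remaining $I_{32}=2\E\int_{Q}r\partial_{t}\rho\,z\cdot B_{2}z\,dt$ is harmless because $|\theta_{t}|\leq C$ makes $r\partial_{t}\rho=\mathcal{O}(s\theta\varphi)$; after integrating $D_{j}A_{j}$ by parts to move it onto the product of weights with $z$, it produces only cross terms $|z|^{2}$ and $|z||D_{j}A_{j}z|^{2}$ that are controlled by Young's inequality and fit into the $s^{2}\lambda^{3}\varphi^{2}\mathcal{O}(1)|z|^{2}$ and $s\lambda\varphi\mathcal{O}(1)|D_{i}A_{i}z|^{2}$ entries of $X_{2}$.

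The main obstacle is bookkeeping, not conceptual. Every expansion produces several remainder classes $\mathcal{O}_{\lambda}(sh)$, $\mathcal{O}_{\lambda}((sh)^{2})$, $h\mathcal{O}_{\lambda}(sh)$ and $s\mathcal{O}_{\lambda}((sh)^{2})$, and one must check, using Proposition \ref{prop:weight} and Lemma \ref{lem:difference:average:2}, that each of them is subordinated to one of the error terms listed in $X_{2}$ under the scaling regime $\max_{t}\theta(t)\,\tau h\leq 1$. Similarly, the boundary integrals produced by the successive applications of \eqref{eq:int:dif} and \eqref{eq:int:ave} must be shown to assemble precisely into $Y_{2}$: only the principal $-2s\lambda\varphi\gamma_{i}^{2}\partial_{i}\psi\, t_{r}^{i}(|D_{i}z|^{2})\nu_{i}$ and its lower-order corrections $s\mathcal{O}_{\lambda}(sh)$, $h\mathcal{O}_{\lambda}(sh)$, $s\mathcal{O}_{\lambda}((sh)^{2})$ survive, because $B_{2}$ carries no time-boundary contribution and $C_{1},C_{2},C_{3}$ do not introduce any $t=0,T$ boundary piece when paired with $B_{2}$. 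Since the temporal weight satisfies $|\theta|,|\theta_{t}|\leq C$, this bookkeeping is identical to that in the discrete elliptic setting of \cite{boyer-2014}, which is the reason the authors defer the full calculation to that reference.
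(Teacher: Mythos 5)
Your overall route is the right one, and in fact it is the route the paper itself takes: the authors do not write out a proof of Lemma \ref{lem:dt} at all, but state that $I_{12},I_{22},I_{32}$ "behave analogously to the deterministic setting studied in \cite{boyer-2014}", i.e.\ exactly the machinery you invoke (discrete integration by parts \eqref{eq:int:dif}--\eqref{eq:int:ave}, the product rules of Proposition \ref{pro:product}, and the weight estimates of Proposition \ref{prop:weight}, Lemma \ref{lem:difference:average:2}, Theorem \ref{theo:weight:estimates}), with the time weight playing no role because $B_2z$ carries no $dz$ or $dB(t)$ and $|\theta|,|\theta_t|\leq C$. Your remarks about $I_{32}$ being lower order, about the absence of $t=0,T$ contributions, and about the boundary terms assembling into $Y_2$ from the spatial integrations by parts in $I_{12}$ are all consistent with this.

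However, your leading-order bookkeeping is wrong in a way that matters for the precise constants the lemma asserts. In $I_{12}$ the weights are $rA_i^2\rho=1+\mathcal{O}_{\lambda}((sh)^2)$ and $rD_jA_j\rho=-s\lambda\varphi\,\partial_j\psi+s\,\mathcal{O}_{\lambda}((sh)^2)$, so the total power of $s$ available is one; differentiating the coefficients in the integrations by parts only gains powers of $\lambda$, never of $s$. Hence the "principal symbol" you write, $2\gamma_i\gamma_j(s\lambda\varphi)^2\partial_i\psi\,\partial_j\psi$, is off by a factor $s\varphi$ (the correct kernel producing the gradient term is of order $s\lambda^2\varphi$, obtained when $D_j$ falls on $s\lambda\varphi\gamma_i\gamma_j\partial_j\psi$ after writing $D_jA_jz\,D_iz$ terms as $\tfrac12 D_j(|D_iz|^2)$-type expressions), and, more importantly, $I_{12}$ cannot produce any bulk term of size $s^3\lambda^4\varphi^3|\nabla_{\gamma}\psi|^4|z|^2$. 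Consequently your splitting of the coefficient $6$ as $2$ (from $I_{12}$) plus $4$ (from $I_{22}$) cannot occur: the entire term $6s^3\lambda^4\varphi^3|\nabla_{\gamma}\psi|^4|z|^2$ comes from $I_{22}$ alone, where $rD_i^2\rho=s^2\lambda^2\varphi^2(\partial_i\psi)^2+\mathcal{O}_\lambda(s)$ pairs with the order-$s$ weight of $B_2z$; using $A_i^2z=z+\tfrac{h^2}{4}D_i^2z$, $2A_jz\,D_jz=D_j(|z|^2)$ and one discrete integration by parts, the factor $3$ arises from $\partial_j(\varphi^3)=3\lambda\varphi^3\partial_j\psi$ and the factor $2$ from the symmetrization, giving exactly $6$. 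If you carry out your plan as written you will not reproduce the stated inequality; the fix is simply to redo the $s$-power count for $I_{12}$ (it yields only the negative gradient term, the boundary terms of $Y_2$, and $X_2$-type remainders) and to extract the full $s^3$ term from $I_{22}$.
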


%%%%%%%%%%%%%%%%%%%%%%%%%%%%%%%%%%%%%%%%%%%%%%%%%%%%%%%%%%%%%%%%%%%%%%%%%%%%%%%%%%%%%%%%%%%%%%%%%%%%%%%%%%%%%%%%%%%%%%%%%%%%%%%%%%%%%%%%%%%%%%%%%%%%%%%%%%%%%%%%%%%%%%%%%%%%%%%%%%%%%%%%%%%%%%%%%%%%%%%%%

%%%%%%%%%%%%%%%%%%%%%%%%%%%%%%%%%%%%%%%%%%%%%%%%%%%%%%%%%%%%%%%%%%%%%%%%%%%%%%%%%%%%%%%%%%%%%%%%%%%%%%%%%%%%%%%%%%%%%%%%%%%%%%%%%%%%%%
Then by Lemma \ref{lem:dz}-\ref{lem:dt} we obtain the following
\begin{equation}\label{eq:sumterms}
\begin{split}
    2\E &\int_Q CzBz \geq\,\E\int_{Q}2s^{3}\lambda^{4}\varphi^{3}|\nabla_{\gamma}\psi|^{4}\,|z|^{2}\,dt+ \sum_{i=1}^{n}\E\int_{Q_{i}^{\ast}}2s\lambda^{2}\varphi\gamma_i|\nabla_{\gamma}\psi|^{2}\,|D_{i}z|^{2}\,dt\\
    &-\sum_{i=1}^{n} \E\int_{Q^{\ast}}\gamma_i |D_i(dz)|^2+\E\int_{Q}(s^2\lambda^2\varphi^2+s\lambda^2\varphi)|\nabla_{\gamma} \psi|^2|dz|^2-\sum_{j=1}^{3}X_{j}-\sum_{j=1}^2 Y_j.
    \end{split}
\end{equation}
To give an estimate of the right-hand side of \eqref{eq:Estirf}, we need the following estimate $M_hz$. The proof can be adapted from Lemma 4.2 in  \cite{boyer-2010-1d-elliptic} and the estimation of $\Phi$ in \cite{zhao:2024}.
\begin{lemma}\label{lem:estimate:termsMh}
(\textit{Estimate of $M_hz$.)} For $\displaystyle \max_{t\in[0,T]}\{\theta(t)\}\tau h\leq 1$, we have
\begin{equation*}
    \E\int_{Q}|M_hz|^2\,dt\leq \mathcal{O}_{\lambda}(1)\left(\E\int_{Q}s^2|z|^2\,dt+h^2\sum_{i=1}^{n}\int_{Q^{\ast}}s^2|D_iz|^2\,dt\right).
\end{equation*}
\end{lemma}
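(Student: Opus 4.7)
The plan is to estimate each component of $M_hz = C_4z + C_5z + R_hz$ separately. Every summand of $M_hz$ has the generic form (weight-function prefactor) $\times$ (discrete operator on $z$), where the operator on $z$ is one of $A_i^2 z$, $A_iD_iz$, or $D_i^2 z$. For $C_4z$ and $C_5z$, the outer operator $D_i$ is first unfolded via the discrete product rule \eqref{eq:difference:product}, so every resulting summand fits this template.

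The weight-function prefactors involve products of $r$, $\gamma_i$, and discrete differences/averages of $\rho$. Each such prefactor admits a pointwise bound of the form $\mathcal{O}_{\lambda}(s^{k})$ for an appropriate $k$. These bounds follow directly from the calculus results collected in Section \ref{sec:weight:function}: Lemma \ref{lem:derivative:wrt:x}, Corollary \ref{Cor:derivative wrt x  for 2r}, Proposition \ref{prop:weight}, Lemma \ref{lem:difference:average:2}, and Theorem \ref{theo:weight:estimates}, together with $D_i\gamma_i = \mathcal{O}(1)$ coming from $\mbox{reg}(\gamma)\leq \mbox{reg}^0$. Typical instances are $rA_iD_i\rho = \mathcal{O}_{\lambda}(s)$, $rD_i^2\rho = \mathcal{O}_{\lambda}(s^2)$, and $D_i(rD_i^2\rho) = \mathcal{O}_{\lambda}(s^2)$, each up to $(sh)^2$ corrections.

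Squaring each summand, integrating over $\mathcal{M}$, and using the Cauchy--Schwarz inequality $|A_iu|^2 \leq A_i(|u|^2)$ together with the integration-by-parts identity \eqref{eq:int:ave}, one converts $\int_{\mathcal{M}}|A_i^2z|^2$ and $\int_{\mathcal{M}_i^\ast}|A_iD_iz|^2$ into $\int_{\mathcal{M}}|z|^2$ and $\int_{\mathcal{M}_i^\ast}|D_iz|^2$ respectively, up to boundary contributions that vanish by the homogeneous Dirichlet condition on $z$. The hypothesis $\max_{t\in[0,T]}\theta(t)\tau h\leq 1$, i.e.\ $sh\leq C$, is then used systematically to reduce mixed powers: for instance $h^2\mathcal{O}_{\lambda}(s^4)= (sh)^2 \mathcal{O}_{\lambda}(s^2)\leq \mathcal{O}_{\lambda}(s^2)$, and $h^4\mathcal{O}_{\lambda}(s^4)= h^2(sh)^2\mathcal{O}_{\lambda}(s^2)\leq h^2\mathcal{O}_{\lambda}(s^2)$.

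The main technical obstacle will be handling the summands in $R_hz$ that involve the second-order discrete Laplacian $D_i^2 z$. For these I would use the pointwise estimate
\[
|D_i^2z(x)|^2 \leq \frac{8}{h^2}\, A_i(|D_iz|^2)(x),
\]
which follows from writing $D_i^2 z(x) = h^{-1}\bigl[(D_iz)(x+\tfrac{h}{2}e_i) - (D_iz)(x-\tfrac{h}{2}e_i)\bigr]$ and applying $|a-b|^2 \leq 2(|a|^2+|b|^2)$. The loss of $h^{-2}$ is precisely offset by the $h^4$ prefactor already present after squaring these summands, yielding an acceptable contribution of order $h^2\mathcal{O}_{\lambda}(s^2)\int_{Q_i^\ast}|D_iz|^2\,dt$. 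Collecting all contributions, summing over $i$, and taking expectations, one obtains the announced bound, following the same line of argument as in Lemma 4.2 of \cite{boyer-2010-1d-elliptic}.
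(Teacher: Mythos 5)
Your proposal is correct and matches the approach the paper itself intends: the paper gives no written proof but defers to Lemma 4.2 of \cite{boyer-2010-1d-elliptic} (and the estimate of $\Phi$ in \cite{zhao:2024}), which is precisely the term-by-term bounding of $C_4z$, $C_5z$ and $R_hz$ that you carry out — pointwise $\mathcal{O}_{\lambda}(s^{k})$ bounds on the weight prefactors from the Appendix calculus, conversion of $A_i^2z$, $A_iD_iz$, $D_i^2z$ back to $z$ and $D_iz$ via $|A_iu|^2\leq A_i(|u|^2)$ and the average integration by parts, and systematic use of $sh\lesssim 1$, with the $h^{-2}$ loss from $D_i^2z$ offset by the $h^4$ prefactor. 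Two harmless details: unfolding $C_5z$ also produces $D_i^2\gamma_i$, which is only $\mathcal{O}(h^{-1})$ since $\mathrm{reg}(\gamma)$ controls just first derivatives, but the $h^2$ prefactor together with $sh\lesssim 1$ still yields an admissible $\mathcal{O}_{\lambda}(s)$ factor; and the boundary term coming from $\int_{\mathcal{M}}A_i(|D_iz|^2)$ does not vanish under the Dirichlet condition on $z$ but enters with a favorable sign, so your conclusion stands.
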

In addition, we will need the following lemma, where its proof proceeds analogously to the proof of Lemma 3.11 in \cite{boyer-2014}.
\begin{lemma}\label{lem:inequalityforDz}
    For $\displaystyle \max_{t\in[0,T]}\{\theta(t)\}\tau h\leq 1$, we have
    \begin{equation*}
         \begin{split}
        &\sum_{i=1}^{n} \E\int_{Q_i^{\ast}} s\lambda^2\varphi \gamma_i |\nabla_{\gamma}\psi|^2|D_iz|^2\,dt\geq\frac{1}{n}\sum_{i=1}^{n}\E\int_{Q} s\lambda^2\varphi \gamma_i|\nabla_{\gamma}\psi|^2|A_iD_iz|^2\,dt+X_3+Y_3,
    \end{split}
    \end{equation*}
 where 
 \begin{equation*}
     \begin{split}
         &X_3:=\, \sum_{i\ne =1}^{n}\left(\E\int_{Q_{ij}^{\ast}}h\mathcal{O}_{\lambda}(sh)\,|D^2_{ij}z|^2\,dt-\E\int_{Q_{i}^{\ast}}h\mathcal{O}(sh)|D_iz|^2\,dt\right)\\
        &+\sum_{i=1}^{n}\left(\E\int_{Q} h\mathcal{O}_{\lambda}(sh)|A_iD_iz|^2\,dt+\E\int_{Q} h\mathcal{O}_{\lambda}(sh)|D^2_iz|^2\,dt-\E\int_{Q_i^{\ast}} h\mathcal{O}_{\lambda}(sh)|D_iz|^2\,dt\right)
     \end{split}
 \end{equation*}
 and
 \begin{equation*}
     Y_3:=\, \sum_{i=1}^{n}\E\int_{\partial_iQ} h\mathcal{O}(sh)t_r^i(|D_iz|^2)\,dt.
 \end{equation*}
\end{lemma}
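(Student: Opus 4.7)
The plan is to transfer the discrete gradient $|D_iz|^2$, which lives on the dual mesh $\mathcal{M}_i^*$, over to the averaged quantity $|A_iD_iz|^2$ on the primal mesh $\mathcal{M}$. The main ingredients are the product identity \eqref{eq:average:product}, the integration-by-parts formula \eqref{eq:int:ave}, and the weight-function approximations collected in Appendix \ref{sec:weight:function}.

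Setting $w_i \triangleq s\lambda^2\varphi\gamma_i|\nabla_\gamma\psi|^2$, I would first apply \eqref{eq:average:product} with both entries equal to $D_iz$ to obtain the pointwise equality $A_i(|D_iz|^2) = |A_iD_iz|^2 + \tfrac{h^2}{4}|D_i^2z|^2$ on $\mathcal{M}$. Multiplying by $w_i$, integrating over $\mathcal{M}$, and applying \eqref{eq:int:ave} yields
\[
\int_{\mathcal{M}} w_i|A_iD_iz|^2 + \tfrac{h^2}{4}\int_{\mathcal{M}} w_i|D_i^2z|^2 \;=\; \int_{\mathcal{M}_i^*} A_iw_i\,|D_iz|^2 \;-\; \tfrac{h}{2}\int_{\partial_i\mathcal{M}} w_i\,t_r^i(|D_iz|^2).
\]
The next step is to replace $A_i w_i$ by $w_i$ on $\mathcal{M}_i^*$ using Proposition \ref{prop:weight}, which delivers $A_iw_i = w_i + h\mathcal{O}(sh)$-type corrections and generates the $h\mathcal{O}(sh)$ error terms present in $X_3$. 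Integrating in time, taking expectation, and rearranging the pointwise identity, I would obtain for each direction $i$
\[
\int_{Q_i^*} w_i|D_iz|^2 \;\geq\; \int_Q w_i|A_iD_iz|^2 \;+\; \tfrac{h^2}{4}\int_Q w_i|D_i^2z|^2 \;+\; \tfrac{h}{2}\int_{\partial_iQ} w_i\,t_r^i(|D_iz|^2) \;-\; h\mathcal{O}(sh)\!\int_{Q_i^*}\! w_i|D_iz|^2.
\]

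The mixed-direction contributions $|D_{ij}^2z|^2$ appearing in $X_3$ are recovered by invoking the identity \eqref{eq:averengeanddifference}, namely $D_iz = A_j^2(D_iz) - \tfrac{h^2}{4}D_j^2(D_iz)$ for $j \neq i$, which couples the two directions; the resulting cross-products are controlled by the estimates of Theorem \ref{theo:weight:estimates} and produce the $h\mathcal{O}_\lambda(sh)|D_{ij}^2z|^2$ terms with the correct order in $s$, $\lambda$ and $h$. Summing over $i = 1, \dots, n$, collecting all corrections of order $h\mathcal{O}(sh)$ into $X_3$ and the boundary traces into $Y_3$, completes the argument. The prefactor $1/n$ on the right-hand side is retained as a safety buffer that facilitates the absorption of positive quantities during the final steps of the Carleman inequality.

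The principal obstacle is the bookkeeping of the mixed-direction corrections $|D_{ij}^2z|^2$, since these do not emerge from a single application of \eqref{eq:int:ave} in direction $i$ alone. One must iterate the product identity simultaneously in directions $i$ and $j$ and verify, using the weight estimates of Appendix \ref{sec:weight:function}, that the cross-products carry precisely the orders in $s$, $\lambda$ and $h$ stated in $X_3$. Once these errors are catalogued correctly, the remainder of the computation is routine discrete calculus.
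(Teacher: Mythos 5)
Your core computation is correct and is, in essence, the expected argument: the paper itself gives no proof of this lemma (it only points to the analogous Lemma 3.11 of \cite{boyer-2014}), and the natural adaptation is exactly your three steps --- the identity $A_i(|D_iz|^2)=|A_iD_iz|^2+\tfrac{h^2}{4}|D_i^2z|^2$ from \eqref{eq:average:product}, integration by parts for the average via \eqref{eq:int:ave}, and the comparison $A_iw_i=w_i+h\mathcal{O}_{\lambda}(sh)$ for the weight $w_i=s\lambda^2\varphi\gamma_i|\nabla_{\gamma}\psi|^2$ (for which the right reference is Lemma \ref{lem:difference:average:2}/Corollary \ref{Cor:derivative wrt x  for 2r} rather than Proposition \ref{prop:weight}, which concerns $r\mathbf{A}_h^l\mathbf{D}_h^k\partial^\alpha\rho$). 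Note, however, that this already proves a \emph{stronger} inequality, with constant $1$ in place of $1/n$ and with the only error being $-\sum_i\E\int_{Q_i^*}h\mathcal{O}_\lambda(sh)|D_iz|^2\,dt$: the terms $|D^2_{ij}z|^2$, the remaining pieces of $X_3$, the boundary term $Y_3$ and the prefactor $1/n$ are \emph{allowances} in the statement (the corresponding $\mathcal{O}$-functions may be taken to vanish), not quantities your proof must manufacture. Hence the paragraph in which you try to ``recover'' the mixed terms via \eqref{eq:averengeanddifference} in the direction $j\neq i$ --- which you single out as the principal obstacle --- is superfluous and, as sketched, vague; it should simply be deleted. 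Two bookkeeping corrections: in your per-direction inequality the error should be the absolute quantity $\int_{Q_i^*}h\mathcal{O}_\lambda(sh)|D_iz|^2$ and not $h\mathcal{O}(sh)\int_{Q_i^*}w_i|D_iz|^2$ (the extra factor $w_i\sim s$ changes the order), and the boundary term produced by \eqref{eq:int:ave} is $\tfrac{h}{2}w_i\,t_r^i(|D_iz|^2)=\mathcal{O}_\lambda(sh)\,t_r^i(|D_iz|^2)$, a factor $h$ larger than the form stated in $Y_3$; since $w_i\geq 0$ this term is nonnegative and appears with a plus sign, so it can simply be dropped, after which the lemma follows with $Y_3$ (and the unused parts of $X_3$) taken to be zero.
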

Combining the lemma \ref{lem:estimate:termsMh} and \ref{lem:inequalityforDz} with \eqref{eq:sumterms}, we see that for $sh\leq \varepsilon$, there exist $\lambda_{1}\geq 1$, and $\varepsilon_{1}(\lambda)>0$ such that for $\lambda\geq \lambda_{1}$ and $0<sh\leq \varepsilon_{1}(\lambda)$, we have
\begin{equation}\label{eq:conmutador}
\begin{split}
\E\int_{Q}|rf|^2\,dt&\geq\,\E\int_{Q}|Cz|^2\,dt-\sum_{i=1}^{n}\E\int_{Q_{i}^{\ast}}\gamma_i|D_i(dz)|^2+\E\int_{Q}2s^{3}\lambda^{4}\varphi^{3}|\nabla_{\gamma}\psi|^{4}\,|z|^{2}\,dt\\
   &+ \sum_{i=1}^{n}\E\int_{Q_{i}^{\ast}}s\lambda^{2}\varphi|\nabla_{\gamma}\psi|^{2}\,|D_{i}z|^{2}\,dt+\frac{1}{n}\sum_{i=1}^{n}\E\int_{Q} s\lambda^2\varphi \gamma_i|\nabla_{\gamma}\psi|^2|A_iD_iz|^2\,dt\\
   &+\E\int_{Q}(s^2\lambda^2\varphi^2+s\lambda^2\varphi)|\nabla_{\gamma} \psi|^2|dz|^2+H+\tilde{X}+\tilde{Y},
   \end{split}
\end{equation}
with
\begin{equation}
\begin{split}
\tilde{X}:=&\sum_{l=1}^{2}X_{l}+Y_{l}-\E\int_{Q}s^{3}\lambda^{3}\varphi^{3}\mathcal{O}(1)|z|^{2}\,dt
    \end{split}-\E\int_{Q}s^2\lambda^4\varphi^2\mathcal{O}(1)\,|z|^2\,dt
\end{equation}
If we fix $\lambda=\lambda_{1}$, we can choose $\varepsilon_{0}$ and $h_{0}$ to be sufficiently small, with $0<\varepsilon \leq \varepsilon(\lambda_{1})$, and $s_{0}\geq 1$ sufficiently large, so that for $s\geq \tau_{0}$, $0<h\leq h_{0}$, and $sh\leq \varepsilon_{0}$, we obtain 
\begin{equation}\label{eq:firstestimate}
\begin{split}
\mathcal{C}_{\tau_{0},\varepsilon_0}&\E\int_{Q}|rf|^2\,dt\geq\,\E\int_{Q}|Cz|^2\,dt-\sum_{i=1}^{n}\E\int_{Q_i^{\ast}}\gamma_i|D_i(dz)|^2+\E\int_{Q}s^{3}\lambda^{4}_{1}\varphi^{3}|\nabla_{\gamma}\psi|^{4}\,|z|^{2}\,dt\\
    &+ \sum_{i=1}^{n}\E\int_{Q_{i}^{\ast}}s\lambda^{2}_{1}\varphi|\nabla_{\gamma}\psi|^{2}\,|D_{i}z|^{2}+\frac{1}{n}\sum_{i=1}^{n}\E\int_{Q} s\lambda_{1}^2\varphi \gamma_i|\nabla_{\gamma}\psi|^2|A_iD_iz|^2\,dt\\
    &+\E\int_{Q}(s^2\lambda_{1}^2\varphi^2+s\lambda_{1}^2\varphi)|\nabla_{\gamma} \psi|^2|dz|^2-\tilde{Y},
    \end{split}
\end{equation}
where
\begin{equation}\label{eq:valueboundary}
    \begin{split}
        \tilde{Y}:= \,\sum_{i=1}^{n}\E\int_{\partial_i Q }2s\lambda_{1} \varphi (\gamma_i)^2\partial_i\psi\,& t_r^i(|D_iz|^2)\nu_i\,dt\\
        &+\mathcal{C}_{\tau_{0},\varepsilon_0}\left(\left.-\sum_{i=1}^{n}\E\int_{\mathcal{M}_i^{\ast}}|D_iz|^2\right|_0^T+\left.\E\int_{\mathcal{M}}s^2\,|z|^2\right|_0^T\right)
    \end{split}
\end{equation}
{ 
Moreover, since $z(0)=0$ and recalling that $\psi$ satisfies \eqref{Assumption_data_inteior}, we obtain the following
\begin{equation*}
    \tilde{Y}\leq \mathcal{C}_{\tau_0,\varepsilon_0}\left(\E\int_{\mathcal{M}}s^2|z|\bigg|_{t=T}\right).
\end{equation*}
Therefore, we write \eqref{eq:firstestimate} as
\begin{equation}\label{eq:finalestimateinz2}
\begin{split}  
\E\int_{Q} s^{3}\lambda^{4}_{1}&\varphi^{3}|z|^{2}\,dt+\E\int_{Q}(s^2\lambda_{1}^2\varphi^2+s\lambda_{1}^2\varphi)|dz|^2+\E\int_{Q}|Cz|^2\,dt\\
&+\sum_{i=1}^{n}\left(\E\int_{Q_{i}^{\ast}}s\lambda^{2}_{1}\varphi\,|D_{i}z|^{2}\,dt+\E\int_{Q} s\lambda_{1}^2\varphi|A_iD_iz|^2\,dt-\E\int_{Q_i^{\ast}}|D_i(dz)|^2\right)\\
&\leq\,\E\int_{0}^T\int_{G_{1}\cap \mathcal{M}}s^{3}\lambda^{4}_{1}\varphi^{3}\,|z|^{2}\,dt+ \sum_{i=1}^{n}\E\int_0^T\int_{G_{1}\cap\mathcal{M}_{i}^{\ast}}s\lambda^{2}_{1}\varphi\,|D_{i}z|^{2}\,dt\\
&+\mathcal{C}_{\tau_{0},\varepsilon_0}\left(\E\int_{Q}|rf|^2\,dt+\left.\E\int_{\mathcal{M}}s^2|z|^2\right|_{t=T}\right).
\end{split}
\end{equation}
}

%%%%%%%%%%%%%%%%%%%%%%%%%
\subsection{Returning to the original variable}\label{sec:returninbackvarible}

We now return to our original variable. The methodology in this section departs from the approach in \cite{LPP:2025}. Therefore, as a first step, we establish the following result for the terms on the left-hand side of the desired Carleman estimate, which corresponds to the left-hand side of inequality \eqref{eq:finalestimateinz2}.

\begin{lemma}\label{lem:inequalityDz-Dw}
Let $\lambda=\lambda_{1}$, then there exists $\varepsilon_{2}$ and $h_{2}$ sufficiently small, with $0<\varepsilon \leq \varepsilon(\lambda_{1})$, and $s_{2}\geq 1$ sufficiently large, such that for $s\geq s_{2}$, $0<h\leq h_{2}$, and $sh\leq \varepsilon_{2}$, we obtain the following inequality:
\begin{equation}\label{eq:estimateleftside}
\begin{split}
    J(w)+&\E\int_Q s^3\lambda^4_{1}\varphi^3|rw|^2\,dt+\E\int_{Q}s\lambda^2_{1}\varphi|rg|^2\,dt-\sum_{i=1}^{n}\E\int_{Q_i^{\ast}}|rD_i(g)|^2\,dt\\
    &\leq \mathcal{C}'_{\tau_2,\varepsilon_2}\left(\E\int_Qs^3\lambda^4_{1}\varphi^3|z|^2\,dt+\E\int_{Q}(s^2\lambda^2\varphi^2+s\lambda^2\varphi)|\nabla_{\gamma} \psi|^2|dz|^2\right.\\
    &\left.+\sum_{i=1}^{n} \left[\E\int_{Q}s\lambda^2\varphi|A_iD_iz|^2\,dt+\E\int_{Q_i^{\ast}}s\varphi\lambda^2\,|D_iz|^2\,dt-\E\int_{Q_i^{\ast}}|D_i(dz)|^2\right]\right)
\end{split}
\end{equation}
where $\displaystyle J(w):= \sum_{i=1}^{n}\E\int_{Q_i^{\ast}}s\varphi\lambda^2|rD_iw|^2\,dt +\E\int_{Q}s\lambda^2\varphi |rA_iD_iw|^2\,dt$ and $w$ is solution of $\displaystyle dw+\sum_{i=1}^{n}D_i(\gamma_i w)dt=fdt+gdB(t)$ with $w=0$ on $\partial Q$.
\end{lemma}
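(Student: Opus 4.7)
The strategy is to invert the conjugation $w=\rho z$ and bound each $w$-object on the left-hand side of \eqref{eq:estimateleftside} by the corresponding $z$-objects on the right, using the symbolic calculus for the weight function recalled in the appendix.

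\textbf{Deterministic quantities.} The discrete product rules \eqref{eq:difference:product}--\eqref{eq:average:product} applied to $w=\rho z$ give on $\mathcal{M}_{i}^{\ast}$
\[
rD_{i}w \;=\; (rD_{i}\rho)\,A_{i}z + (rA_{i}\rho)\,D_{i}z,
\]
and, after a second application, on $\mathcal{M}$
\[
rA_{i}D_{i}w \;=\; (rA_{i}D_{i}\rho)\,A_{i}^{2}z + (rA_{i}^{2}\rho)\,A_{i}D_{i}z + \tfrac{h^{2}}{4}\bigl[(rD_{i}^{2}\rho)\,A_{i}D_{i}z + (rD_{i}A_{i}\rho)\,D_{i}^{2}z\bigr].
\]
By Lemma \ref{lem:derivative:wrt:x} and Proposition \ref{prop:weight}, each coefficient $r\,\mathbf{A}_{h}^{l}\mathbf{D}_{h}^{k}\rho$ splits as its principal symbol, a polynomial of degree $|k|$ in $s\lambda\varphi$ with coefficients in the derivatives of $\psi$, plus an $\mathcal{O}_{\lambda}((sh)^{2})$ remainder; explicitly $rA_{i}^{2}\rho=1+\mathcal{O}_{\lambda}((sh)^{2})$, $rA_{i}D_{i}\rho=-s\lambda\varphi\,\partial_{i}\psi+\mathcal{O}_{\lambda}(1)+s\mathcal{O}_{\lambda}((sh)^{2})$, and $rD_{i}^{2}\rho=\mathcal{O}_{\lambda}(s^{2}\varphi^{2})$. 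Squaring these expressions, multiplying by $s\lambda_{1}^{2}\varphi$, integrating over $Q$ and $Q_{i}^{\ast}$, and using $|A_{i}z|^{2}\le A_{i}(|z|^{2})$ together with \eqref{eq:int:ave} bounds $J(w)$ by the $s^{3}\lambda_{1}^{4}\varphi^{3}|z|^{2}$, $s\lambda_{1}^{2}\varphi|A_{i}D_{i}z|^{2}$ and $s\lambda_{1}^{2}\varphi|D_{i}z|^{2}$ contributions on the right, up to $(sh)^{p}$ and $h^{p}$ remainders. The residual $h^{2}D_{i}^{2}z$ term, absent from the right-hand side, is controlled through \eqref{eq:averengeanddifference}: the identity $h^{2}D_{i}^{2}z=4(A_{i}^{2}z-z)$ produces $h^{4}|D_{i}^{2}z|^{2}\lesssim |z|^{2}+A_{i}^{2}(|z|^{2})$, absorbable into the $s^{3}\lambda_{1}^{4}\varphi^{3}|z|^{2}$ term for $s\ge s_{2}$ sufficiently large. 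The equality $|rw|^{2}=|z|^{2}$ is immediate.

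\textbf{Stochastic quantities.} Applying It\^o's formula to $z=rw$ and using the equation for $w$ yields
\[
dz \;=\; \Bigl[r_{t}w+r\textstyle\sum_{i} D_{i}(\gamma_{i}D_{i}w)+rf\Bigr]\,dt+(rg)\,dB(t),
\]
so the It\^o bracket reads $d\langle z\rangle_{t}=|rg|^{2}\,dt$; equivalently $\E\int_{Q}W|dz|^{2}=\E\int_{Q}W|rg|^{2}\,dt$ for any nonnegative weight $W$. Since $\psi$ satisfies \eqref{funcion-psi} and the $\gamma_{i}$ are bounded away from zero, $|\nabla_{\gamma}\psi|^{2}\ge c_{0}>0$ on $\overline{G}$; hence the right-hand side term $(s^{2}\lambda^{2}\varphi^{2}+s\lambda^{2}\varphi)|\nabla_{\gamma}\psi|^{2}|dz|^{2}$ dominates $s\lambda_{1}^{2}\varphi|rg|^{2}$, delivering the third contribution on the left. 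For the $D_{i}g$ term, the discrete product rule gives
\[
D_{i}(rg) \;=\; (D_{i}r)\,A_{i}g+(A_{i}r)\,D_{i}g,
\]
and Young's inequality combined with $(A_{i}r)^{2}=r^{2}(1+\mathcal{O}_{\lambda}((sh)^{2}))$ shows that $|D_{i}(rg)|^{2}-|rD_{i}g|^{2}$ is bounded, up to an $\mathcal{O}_{\lambda}((sh)^{2})|rD_{i}g|^{2}$ piece absorbed for $sh\le\epsilon_{2}$, by a cross remainder $|D_{i}r|^{2}|A_{i}g|^{2}\lesssim s^{2}\lambda^{2}\varphi^{2}A_{i}(|g|^{2})$. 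Through \eqref{eq:int:ave} and the fact that $r^{2}=e^{2s\varphi}\gg 1$ for $s\ge s_{2}$, this cross remainder is absorbed into the $|dz|^{2}$-weighted contribution already present on the right, which allows identifying $-\sum_{i}\E\!\int_{Q_{i}^{\ast}}|rD_{i}g|^{2}$ with $-C'\sum_{i}\E\!\int_{Q_{i}^{\ast}}|D_{i}(dz)|^{2}$ modulo terms already accounted for.

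\textbf{Main obstacle.} The principal difficulty is the meticulous bookkeeping of the $(sh)^{p}$ and $h^{p}$ remainders generated by the weight estimates and the discrete product rules; in particular, the $h^{2}D_{i}^{2}z$ contributions from the expansion of $A_{i}D_{i}w$ have no direct counterpart on the right-hand side of \eqref{eq:estimateleftside} and must be reduced through \eqref{eq:averengeanddifference}. Once $s_{2}\ge 1$ is chosen sufficiently large and $\epsilon_{2}>0$ sufficiently small, all of these remainders collapse into the principal $z$-quantities on the right, and \eqref{eq:estimateleftside} follows with a constant $C'_{s_{2},\epsilon_{2}}$ depending on the final choice of thresholds.
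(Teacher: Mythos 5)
Your deterministic bookkeeping follows the paper's own route: the expansions of $rD_iw$ and $rA_iD_iw$ via the discrete product rules, the weight calculus of Lemma \ref{lem:derivative:wrt:x} and Proposition \ref{prop:weight}, the conversion of $|A_iz|^2$ into $|z|^2$ through \eqref{eq:int:ave} and the homogeneous boundary values, and the identity $rw=z$ are exactly the steps of the paper's proof. Your disposal of the residual $h^{2}D_i^{2}z$ contribution through \eqref{eq:averengeanddifference} is a small legitimate variation (the paper instead carries an $\mathcal{O}_{\lambda}((sh)^{4})|D_i^{2}z|^{2}$ remainder); the resulting $\mathcal{O}_{\lambda}(s^{3})|z|^{2}$ term is indeed absorbed by $\E\int_Q s^{3}\lambda_1^{4}\varphi^{3}|z|^{2}\,dt$, so this part is fine.

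The gap is in the stochastic part. First, your two quantitative claims are mutually incompatible: writing $D_i(rg)=(D_ir)A_ig+(A_ir)D_ig$ and applying Young with parameter $\delta$, the excess over $|rD_ig|^{2}$ is $(\delta+\mathcal{O}_{\lambda}((sh)^{2}))|rD_ig|^{2}$ while the cross remainder is $(1+\delta^{-1})|D_ir|^{2}|A_ig|^{2}$; to make the excess $\mathcal{O}_{\lambda}((sh)^{2})$ as you assert you must take $\delta\sim(sh)^{2}$, which turns the cross remainder into $\lambda^{2}\varphi^{2}h^{-2}r^{2}|A_ig|^{2}$, and $h^{-2}$ is not dominated by $s^{2}$ (only $sh\leq\epsilon$ is available), so it can no longer be absorbed by the $|dz|^{2}$ term; with a fixed $\delta$ you instead get a fixed-fraction excess on $|rD_ig|^{2}$. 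Second, and more importantly, the concluding step "identifying $-\sum_i\E\int_{Q_i^{\ast}}|rD_ig|^{2}$ with $-C'\sum_i\E\int_{Q_i^{\ast}}|D_i(dz)|^{2}$ modulo terms already accounted for" is precisely the sign-sensitive content of the lemma and does not follow from what precedes it: since that negative term sits inside the bracket multiplied by $C'$, what has to be produced is an upper bound of $\E\int_{Q_i^{\ast}}|D_i(dz)|^{2}$ by (essentially) $\E\int_{Q_i^{\ast}}|rD_ig|^{2}$ plus a cross term of weight $s^{2}\lambda^{2}\varphi^{2}r^{2}|g|^{2}$ that is then paid for by the $(s^{2}\lambda^{2}\varphi^{2}+s\lambda^{2}\varphi)|\nabla_{\gamma}\psi|^{2}|dz|^{2}$ resource. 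The paper obtains exactly this by differentiating $dw=\rho\,dz+\partial_t(\rho)z\,dt$, solving for $D_i(dz)$ in terms of $rD_i(dw)$ and $A_i(dz)$, and passing to quadratic variations, which yields \eqref{eq:estimatefinaldedw} with coefficient one on the $|D_i(dz)|^{2}$ term (and it is this coefficient-one version that the subsequent combination with \eqref{eq:finalestimateinz} actually uses). Your argument can be repaired along those lines, but as written the comparison of the two negative terms does not close; also, the appeal to "$r^{2}=e^{2s\varphi}\gg1$" plays no role here, since both sides of that comparison already carry $r^{2}$ and what matters is $|\nabla_{\gamma}\psi|^{2}\geq c_{0}>0$.
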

\begin{proof}
Recalling $w=\rho z$, we have $D_{i}w=A_i\rho\,D_iz+D_i\rho\,A_iz$. Therefore, by \cite[Lemma 3.7]{BHLR:2010b} and {\cite[Proposition 3.3]{AA:perez:2024}}, for each $i{ =1,...,n.}$ it holds that
\begin{equation}\label{eq:equal1_1}
    \begin{split}
        \E\int_{Q_{i}^{\ast}}s\varphi \lambda^2|rD_i w|^2\,dt\leq\;& \E\int_{Q_i^{\ast}}s\varphi\lambda^2\,|D_iz|^2\,dt+\E\int_{Q_{i}^{\ast}}s\mathcal{O}_{\lambda}((sh)^2)\,|D_iz|^2\,dt\\
        &\E\int_{Q_i^{\ast}}s^3\varphi^3\lambda^4|A_iz|^2\,dt+\E\int_{Q_i^{\ast}}s^3\mathcal{O}_{\lambda}((sh)^2)\,|A_iz|^2\,dt.
    \end{split}
\end{equation}
Now, using \eqref{eq:inequalityAverange}, and by the integration by parts with respect average operator, we have
\begin{equation*}
    \begin{split}
\E\int_{Q_i^{\ast}}s^3\varphi^3\lambda^4|A_iz|^2\,dt\leq  \E\int_{Q_i^{\ast}}s^3\varphi^3\lambda^4A_i(|z|^2)\,dt=\E\int_{Q}s^3A_i(\varphi^3)\lambda^4|z|^2\,dt,
    \end{split}
\end{equation*}
where used that $z=0$ on $\partial_i Q$. Applying the approximation to function $A_i(\varphi^3)=\varphi^3+\mathcal{O}_{\lambda}((sh)^2)$ on integral above, we obtain that
\begin{equation}\label{eq:equall1_2*}
\E\int_{Q_i^{\ast}}s^3\varphi^3\lambda^4|A_iz|^2\,dt\leq\E\int_{Q}s^3\varphi^3\lambda^4|z|^2\,dt+\E\int_{Q}s\mathcal{O}_{\lambda}((sh)^2)|z|^2\,dt.
\end{equation}
Similar to the derivation above, we see that
\begin{equation}\label{eq:equal1_2}
    \E\int_{Q_i^{\ast}}s^3\mathcal{O}_{\lambda}((sh)^2)\,|A_iz|^2\,dt\leq \E\int_{Q}s^3\mathcal{O}_{\lambda}((sh)^2)\,|z|^2\,dt.
\end{equation}
Then, combining \eqref{eq:equal1_1}-\eqref{eq:equal1_2}, we obtain the following inequality for first term of the $J(w)$:
\begin{equation}\label{chagelleofDi}
\begin{split}
   \E\int_{Q_{i}^{\ast}}s\varphi \lambda^2&|rD_i w|^2\,dt\leq\; \E\int_{Q_i^{\ast}}s\varphi\lambda^2\,|D_iz|^2\,dt+\E\int_{Q_{i}^{\ast}}s\mathcal{O}_{\lambda}((sh)^2)\,|D_iz|^2\,dt\\
   &\E\int_{Q}s^3\varphi^3\lambda^4|z|^2\,dt+\E\int_{Q}s\mathcal{O}_{\lambda}((sh)^2)|z|^2\,dt+\E\int_{Q}s^3\mathcal{O}_{\lambda}((sh)^2)\,|z|^2\,dt.
\end{split}
\end{equation}
Now, for the second terms of the $J(w)$ we use \eqref{eq:difference:product} and \eqref{eq:average:product}, therefore
\begin{equation*}
    \begin{split}
        A_i(D_i(w))=&A_i(D_i(\rho)A_i(z))+A_i(A_i(\rho)D_i(z))\\
        =&A_iD_i(\rho)\,A_i^2(z)+A_i^2(\rho)\,A_iD_i(z)+\frac{h^2}{4}\left(D_i^2(\rho)\,D_iA_i(z)+D_iA_i(\rho)\,D_i^2(z)\right).
    \end{split}
\end{equation*}
Applying \eqref{eq:averengeanddifference} to the first term on the right-hand side of the previous equation, we obtain the following equality 
\begin{equation}\label{eq1:lemmaAD} 
    rA_iD_iw=rA_iD_i(\rho)\, z+rA_i^2(\rho)\,A_iD_i(z)+\frac{h^2}{4}\left(rD_i^2(\rho)\,D_iA_i(z)+2rD_iA_i(\rho)\,D_i^2(z)\right).
\end{equation}
With Proposition {\cite[Proposition 3.3]{AA:perez:2024}} and \cite[Lemma 3.7]{BHLR:2010b} we then find
\begin{equation}\label{eq2:lemAdz}
    rA_iD_iw=A_iD_iz-s\lambda\varphi\mathcal{O}(1)\,z+\mathcal{O}_{\lambda}((sh)^2)\,D_iA_iz+h\mathcal{O}_{\lambda}(sh)\, D_i^2(z)
\end{equation}
Now, by applying the triangle inequality and \eqref{eq2:lemAdz},   we obtain the following estimate for each $i{ =1,...,n.}$ of the second terms from $J(w)$:  
\begin{equation}\label{eq:changelleAiDi}
    \begin{split}
      \E\int_{Q}s\lambda^2\varphi &|rA_iD_iw|^2\,dt\\
      \leq& \, \E\int_{Q}s\lambda^2\varphi|A_iD_iz|^2\,dt+\E\int_{Q}s^3\lambda^4\varphi\mathcal{O}(1)\,|z|^2\,dt+\E\int_{Q}\mathcal{O}_{\lambda}((sh)^4)\,|D^2_iz|^2\,dt.
    \end{split}
\end{equation}
Combining \eqref{chagelleofDi} and \eqref{eq:changelleAiDi}, we can estimate to $J(w)$ as 
\begin{equation}\label{eq:estimateofJ(w)}
\begin{split}
    J(w)\leq \sum_{i=1}^{n}&\left[\E\int_{Q_i^{\ast}}s\varphi\lambda^2\,|D_iw|^2\,dt+\E\int_{Q}s\lambda^2\varphi|A_iD_iz|^2\,dt+\E\int_{Q_{i}^{\ast}}s\mathcal{O}_{\lambda}((sh)^2)\,|D_iz|^2\,dt\right.\\
    &\left.+\E\int_{Q}\mathcal{O}_{\lambda}((sh)^4)\,|D^2_iz|^2\,dt\right]+\E\int_{Q}s^3\lambda^4\varphi\mathcal{O}(1)\,|z|^2\,dt.
\end{split}
\end{equation}
Now, we will focus on estimating the two terms on the left-hand side of the inequality in this Lemma. Given the earlier definition $w=\rho z$ we can derive the relationship between their differentials: $dw=\rho\,dz+\partial_{t}(\rho)\,zdt$. Consequently, applying Proposition \ref{pro:product} we obtain   $$rA_i(\rho)D_i(dz)=rD_i(dw)-rD_i(\partial_t(\rho))A_i(z)dt-rA_i(\partial_t(\rho))D_i(z)dt-rD_i(\rho)A_i(dz).$$ 
Now, by {\cite[Lemma 3.7]{BHLR:2010b}}, {\cite[Corollary 3.8]{BHLR:2010b}} and recall that $r\rho=1$ it follows that
\begin{align*}
D_i(dz)=&rD_i(dw)-\mathcal{O}_{\lambda}((sh)^2)D_i(dz)+s^2\lambda^2\varphi ^2\theta_t(\partial_i\psi)A_i(z)dt-s\lambda^2\varphi\theta_t(\partial_i\psi)A_i(z)dt\\
&-s\lambda\varphi \theta_t D_i(z)dt+s\varphi\lambda(\partial_i\psi)A_i(dz)+\mathcal{O}_\lambda((sh)^2)D_iz\,dt-s\mathcal{O}_{\lambda}((sh)^2)A_i(dz)
\end{align*}
Therefore, from the above equation for each $i{ =1,...,n.}$, we obtain that
\begin{equation*}
    \begin{split}
        \E\int_{Q_i^{\ast}}\gamma_i|D_i(dz)|^2=& \E\int_{Q_i^{\ast}}\gamma_i|rD_i(w)|^2+\E\int_{Q_i^{\ast}}\mathcal{O}_{\lambda}((sh)^4)|D_i(dz)|^2\\
        &+\E\int_{Q_i^{\ast}}s^2\lambda^2\gamma_i(\partial_i\psi)^2|A_i(dz)|^2+\E\int_{Q_i^{\ast}}s^2\mathcal{O}_{\lambda}((sh)^4)|A_i(dz)|^2.
    \end{split}
\end{equation*}
Similar to \eqref{eq:equall1_2*} and \eqref{eq:equal1_2}, we find that
\begin{equation}\label{eq:Di(dz)}
\begin{split}
\E\int_{Q_i^{\ast}}\gamma_i|D_i(dz)|^2\leq& \E\int_{Q_i^{\ast}}\gamma_i|rD_i(w)|^2+\E\int_{Q}s^2\lambda^2\varphi^2(\partial_i\psi)^2|dz|^2\\
&+\E\int_{Q_i^{\ast}}\mathcal{O}_{\lambda}((sh)^4)|D_i(dz)|^2+\E\int_{Q}\mathcal{O}_{\lambda}(sh)^4)|dz|^2.
\end{split}
\end{equation}
Noting that 
\begin{equation*}
    \E\int_{Q}s\lambda^2\varphi\gamma_i(\partial_i\psi)^2|dz|^2=\E\int_{Q}s\lambda^2\varphi\gamma_i(\partial_i\psi)^2|rdw|^2
\end{equation*}
Then, from \eqref{eq:Di(dz)} we obtain for each $i{ =1,...,n.}$
\begin{equation}\label{eq:estimatefinaldedw}
\begin{split}
&\E\int_{Q}s\lambda^2\varphi\gamma_i(\partial_i\psi)^2|rdw|^2-\E\int_{Q_i^{\ast}}\gamma_i|rD_i(w)|^2\leq -\E\int_{Q_i^{\ast}}\gamma_i|D_i(dz)|^2\\
&+\E\int_{Q_i^{\ast}}\mathcal{O}_{\lambda}((sh)^4)|D_i(dz)|^2+\E\int_{Q}\left(s^2\lambda^2\varphi^2+s\lambda^2\varphi\right)\gamma_i(\partial_i\psi)^2|dz|^2+\E\int_{Q}\mathcal{O}_{\lambda}(sh)^4)|dz|^2.
\end{split}
\end{equation}
Du to the inequalities \eqref{eq:estimateofJ(w)},\eqref{eq:estimateD2} and \eqref{eq:estimatefinaldedw}, we conclude the proof of Lemma \eqref{lem:inequalityDz-Dw}.This shows that for $0<sh\leq \varepsilon$ and $\lambda=\lambda_1$, there exist $\varepsilon_2(\lambda)>0$ and $s_2\geq1$ such that for $s\geq s_2$, $0<h<h_2,$ and $0<sh\leq\varepsilon_2(\lambda)$, the desired inequality is obtained.
\end{proof}

To proceed, we introduce the following lemma, which allows us to incorporate the second-order spatial term $D_{ij}^2$ into the left-hand side of the Carleman estimate. The proof follows by adapting the approach in \cite[Theorem 1.2]{LLRP:2025}.

\begin{lemma}\label{lem:inequalityDDz} 
There exists $\varepsilon_3(\lambda)>0$ and $\tau_3\geq1$ such that for $s\geq s_3$, $0<h<h_3$, and $0<sh\leq \varepsilon_3(\lambda)$, we obtain the following inequality:
    \begin{equation*}
    \begin{split}
        &\sum_{i,j=1}^{n}\E\int_{Q_{ij}^{\ast}}s^{-1}e^{2s\varphi}|D_{ij}^2w|^2\, dt-\sum_{i=1}^{n}\E\int_{Q_i}s\lambda^2\varphi^2|\nabla\psi|_{\gamma}^2\gamma_i \partial_i\psi\,|rD_iw|^2+\mathcal{O}(s^{-1})|rA_iD_iw|^2\,dt\\
        &\leq \mathcal{C}_{\varepsilon_3,\tau_3}\left(\E\int_{Q}|Cz|^2\,dt+\E\int_{Q}s\lambda^4\varphi^2|\nabla_{\gamma}\psi|^2|z|^2\,dt+\sum_{i=1}^{n}\E\int_{Q}s\lambda^2\varphi^2|\nabla_{\gamma}\psi|^2\,|A_iD_iz|^2\,dt\right)
    \end{split}
    \end{equation*}
\end{lemma}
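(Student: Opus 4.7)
The plan is to deduce the Hessian-type information $s^{-1}e^{2s\varphi}|D^2_{ij}w|^2$ on the left-hand side by exploiting the elliptic structure hidden inside $Cz$ and then passing from the auxiliary variable $z$ back to the original variable $w=\rho z$. The starting observation is that the dominant contribution of $Cz$ is $C_1z=-\sum_i rA_i^2\rho\,D_i(\gamma_iD_iz)$, which, modulo the $\mathcal{O}((sh)^2)$ correction $rA_i^2\rho=1+\mathcal{O}_\lambda((sh)^2)$ from Proposition \ref{prop:weight}, is precisely the conjugated discrete elliptic operator applied to $z$. The remaining pieces $C_kz$, $k\geq 2$, are of strictly lower order: by Lemma \ref{lem:derivative:wrt:x}, Corollary \ref{Cor:derivative wrt x  for 2r} and Proposition \ref{prop:weight}, each $s^{-1}|C_kz|^2$ is controlled by $s\lambda^4\varphi^2|\nabla_{\gamma}\psi|^2|z|^2$ plus $\mathcal{O}((sh)^2)$ times quadratic combinations of $|A_iD_iz|$, $|D_iz|$ and $|A_iz|$. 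Writing $C_1z=Cz-\sum_{k\geq 2}C_kz$ and applying Young's inequality would then produce a pointwise bound of the schematic form
\[
s^{-1}\bigl|rD_i(\gamma_iD_iz)\bigr|^2\lesssim s^{-1}|Cz|^2+s\lambda^4\varphi^2|\nabla_{\gamma}\psi|^2|z|^2+\sum_i s\lambda^2\varphi^2|\nabla_{\gamma}\psi|^2|A_iD_iz|^2,
\]
once $sh\leq\varepsilon_3(\lambda)$ is taken small enough to absorb the $\mathcal{O}((sh)^2)$ residuals.

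Next I would transform this estimate from $z$ back to $w$. Iterating the discrete product rules \eqref{eq:difference:product}--\eqref{eq:average:product} together with the weight identities collected in Theorem \ref{theo:weight:estimates}, one obtains an expansion of the form $rD_i(\gamma_iD_iz)=D_i(\gamma_iD_iw)+s^2\lambda^2\varphi^2\gamma_i(\partial_i\psi)^2\,w+s\lambda\varphi\gamma_i\partial_i\psi\,\mathcal{O}(1)A_iD_iw+\mathcal{O}((sh)^2)$-remainders. Substituting into the previous bound and integrating over $Q$, the $w$- and $A_iD_iw$-terms generate exactly the two contributions $-s\lambda^2\varphi^2|\nabla_{\gamma}\psi|^2\gamma_i\partial_i\psi\,|rD_iw|^2$ and $\mathcal{O}(s^{-1})|rA_iD_iw|^2$ that are retained on the left-hand side of the lemma. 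The remaining step is to upgrade the bound on $\sum_i|D_i(\gamma_iD_iw)|^2$ to a bound on the full discrete Hessian $\sum_{i,j}|D_{ij}^2w|^2$. This is a discrete elliptic $H^2$-regularity estimate, carried out as in the proof of \cite[Theorem 1.2]{lecaros2025lipschitzstabilityinverseproblems} by expanding $D_i(\gamma_iD_iw)=A_i\gamma_i\,D_i^2w+D_i\gamma_i\,A_iD_iw$, testing the resulting identity against $D_{jj}^2w$, applying the integration-by-parts identities \eqref{eq:int:dif}--\eqref{eq:int:ave}, and summing over $i,j$ to reconstruct the cross-derivative contributions $D_{ij}^2w$ for $i\neq j$ from the square of the divergence together with first-order remainders.

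The main obstacle is precisely this last step: commuting discrete difference and average operators in different directions generates $\mathcal{O}(sh)$ residual terms and boundary contributions that must be absorbed into the negative pieces kept on the left-hand side of the statement without destroying their sign. This forces a careful bookkeeping of the $h$-powers appearing in Theorem \ref{theo:weight:estimates} and a choice of $\varepsilon_3(\lambda)$ small enough so that the weight-function errors do not swamp the principal terms. Once this absorption is achieved, combining the bounds above yields the claimed inequality for every $s\geq s_3$, $0<h\leq h_3$, and $sh\leq\varepsilon_3(\lambda)$.
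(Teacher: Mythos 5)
There is a genuine gap, and it sits exactly where you split $Cz$. Your plan treats $C_2z=-\sum_{i}\gamma_i\,rD_i^2\rho\,A_i^2z$ as a lower-order remainder to be moved to the right-hand side by Young's inequality. But by Lemma \ref{lem:derivative:wrt:x} and Proposition \ref{prop:weight}, $rD_i^2\rho= s^2\lambda^2\varphi^2(\partial_i\psi)^2+\mathcal{O}_\lambda(s)$, so $C_2z$ is a zeroth-order term of size $s^2$ (it is the principal symmetric zeroth-order piece of the conjugated operator, of the same Carleman strength as $C_1z$), and $s^{-1}|C_2z|^2\sim s^3\lambda^4\varphi^4|z|^2$. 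The right-hand side you allow yourself, $\E\int_Q s\lambda^4\varphi^2|\nabla_\gamma\psi|^2|z|^2\,dt$, carries only one power of $s$ and cannot absorb this for large $s$; nor can the $\mathcal{O}((sh)^2)$ smallness help, since the term does not carry $h$. The same size issue then reappears in your $z\to w$ conversion, where the zeroth-order coefficient $s^2\lambda^2\varphi^2\gamma_i(\partial_i\psi)^2$ shows up again: in the exact identity these two large contributions cancel (that is precisely the content of $r\sum_i D_i(\gamma_iD_iw)=Cz+B_2z+B_3z-M_hz$ from \eqref{eq:opconjugate}), but once you estimate $C_1z=Cz-\sum_{k\geq2}C_kz$ and the conversion separately by Young, the cancellation is destroyed and an uncontrollable $s^3|z|^2$ term survives. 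The paper's proof avoids this by never isolating $C_1z$: it keeps the whole block $Cz$ (whose $L^2$ norm is available on the left of \eqref{eq:finalestimateinz} thanks to the way Young was applied in \eqref{eq:Estirf}) and only moves the genuinely lower-order pieces $B_2z$, $B_3z$, $M_hz$, yielding \eqref{eq:estimateD2}.

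A second, structural inaccuracy: you claim the negative terms $-s\lambda^2\varphi^2|\nabla\psi|_{\gamma}^2\gamma_i\partial_i\psi\,|rD_iw|^2$ and $\mathcal{O}(s^{-1})|rA_iD_iw|^2$ are "generated exactly" by substituting the $z\to w$ expansion of $D_i(\gamma_iD_iz)$. They are not: the expansion produces terms of the form $s^2\lambda^2\varphi^2\gamma_i(\partial_i\psi)^2\,rw$ and $s\lambda\varphi\gamma_i\partial_i\psi\,rA_iD_iw$, whereas the LHS terms of the lemma arise from the commutators with the weight $s^{-1}r^2$ in the discrete $H^2$-reconstruction step, i.e.\ from the double integration by parts that turns $\E\int_Q s^{-1}|r\sum_iD_i(\gamma_iD_iw)|^2\,dt$ into $\sum_{i,j}\E\int_{Q_{ij}^\ast}s^{-1}\gamma_i\gamma_j|rD^2_{ij}w|^2\,dt$ modulo first-order terms (this is \eqref{eq:inequalitiAF}, following (2.27)--(2.32) of the cited reference). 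Your final step does invoke that regularity argument, but since you have already (incorrectly) produced those negative terms in the conversion step, the bookkeeping is inconsistent. The fix is to drop the detour through the elliptic operator in $z$ altogether: start from $\mathcal{B}w=r\sum_iD_i(\gamma_iD_iw)=Cz+B_2z+B_3z-M_hz$, bound $\E\int_Q s^{-1}|\mathcal{B}w|^2\,dt$ from above by the lemma's right-hand side using the size estimates for $B_2z$, $B_3z$, $M_hz$, and from below by the weighted Hessian minus the stated first-order terms via the $H^2$ argument, absorbing the $K(w)$-type residuals for $sh\leq\varepsilon_3(\lambda)$.
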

\begin{proof}
    Recalling that $\displaystyle r{ \sum_{i=1}^{n}}D_i(\gamma_iD_iw)=Cz+B_2z+B_3z-M_hz:= \mathcal{B}w$ and applying Young's inequality, we obtain
\begin{equation}
    |\mathcal{B}w|^2\leq  2(|Cz|^2+|B_2z|^2+|B_3z|^2+|M_hz|^2)
\end{equation}
Therefore, taking $s>1$ we have a first estimate for the third left-side term of \eqref{eq:estimateleftside} given by
\begin{equation}\label{eqfirsttermthird}
\begin{split}
       \E\int_{Q}s^{-1}|\mathcal{B}w|^2\,dt\leq \E\int_{Q}2|Cz|^2\,dt+&\E\int_{Q}2s^{-1}|B_2z|^2\,dt\\
       &+\E\int_{Q}2s^{-1}|B_3z|^2\,dt+\E\int_{Q}2s^{-1}|M_hz|^2\,dt. 
\end{split} 
\end{equation}
From the definition of the $B_2z$ and $B_3z$ respectively, \cite[Lemma 3.7]{BHLR:2010b} and Proposition {\cite[Proposition 3.3]{AA:perez:2024}}, we can see that
\begin{equation}
\begin{split}
    \E\int_{Q}2s^{-1}|B_2z|^2dt\leq&\sum_{i=1}^{n}\E\int_{Q}8s^{-1}|\gamma_irD_iA_i\rho\, D_iA_iz|^2dt\\
    \leq &8\sum_{i=1}^{n}\E\int_{Q}(s\lambda^2\varphi^2 |\nabla_\gamma\psi|^2+\mathcal{O}_{\lambda}|((sh)^4)||A_iD_iz|^2dt,
\end{split} 
\end{equation}
\begin{equation}
    \E\int_{Q}2s^{-1}|B_3z|^2dt=4\E\int_{Q}2s^{-1}|s\Delta_{\gamma_i}\varphi\, z|^2dt\leq 8\E\int_{Q}(s\lambda^4\varphi^2 |\nabla_\gamma\psi|^4+s\lambda|\mathcal{O}((1)|^2|z|^2\;dt
\end{equation}
and similar to Lemma \ref{lem:estimate:termsMh}, we can deduce the following estimate for $M_hz$ as
\begin{equation} \label{eq:Mhzi}
     \E\int_{Q}2s^{-1}|M_hz|^2\,dt\leq \mathcal{O}_{\lambda}(1)\left(\E\int_{Q}s|z|^2\,dt+h^2\sum_{i=1}^{n}\int_{Q^{\ast}}s|D_iz|^2\,dt\right).
\end{equation}
Thus, combining \eqref{eqfirsttermthird}-\eqref{eq:Mhzi}  we obtain the estimate 
\begin{equation}\label{eq:estimateD2}
    \begin{split}
        \E\int_{Q}&s^{-1}|\mathcal{B}w|^2\,dt\\
        \leq&\; \E\int_{Q}|Cz|^2dt+\E\int_{Q}(4s\lambda^4\varphi^2 |\nabla_\gamma\psi|^4+s\mathcal{O}_{\lambda}(1)+s\lambda|\mathcal{O}((1)|^2|z|^2dt\\
        &+\sum_{i=1}^{n}\left(\E\int_{Q}h\mathcal{O}_{\lambda}(sh)|D_iz|^2dt+\E\int_{Q}(4s\lambda^2\varphi ^2|\nabla_\gamma\psi|^2+\mathcal{O}_{\lambda}|((sh)^4)||A_iD_iz|^2dt\right).
    \end{split}
\end{equation}
In turn, our next task is to compare the terms $\displaystyle \E\int_{Q}s^{-1}|\mathcal{B}w|^2\, dt$ and \\ $\displaystyle \sum_{i,j=1}^{n}\E\int_{Q_{ij}^{*}}s^{-1}\gamma_i\gamma_jr^2|D_{ij}w|^2\; dt$. To this end, we can now proceed analogously to the proof of Theorem 1.2 in \cite{LLRP:2025}. In particular, from the inequalities (2.27)-(2.32) in \cite{LLRP:2025} it follows that
\begin{equation}\label{eq:inequalitiAF}
    \begin{split}
        \E\int_{Q}s^{-1}|\mathcal{B}w|^2\,dt
        \geq & { \sum_{i,j=1}^{n}}\int_{Q_{ij}^{\ast}}s^{-1}\gamma_i\gamma_j|rD_{ij}^2w|^2\,dt-\sum_{i=1}^{n}\E\int_{Q_i}s\lambda^2\varphi^2|\nabla\psi|_{\gamma}^2\gamma_i\partial_i\psi|rD_iw|^2\,dt\\
    &-\sum_{i=1}^{n}\E \int_{Q}s^{-1}r^2|\mathcal{O}(1)|^2|A_iD_iw|^2\,dt-K(w),
    \end{split}
\end{equation}
where $K(w)$ is given by
\begin{align*}
    K(w):=&{ \sum_{i,j=1}^{n}}\E\int_{Q_{ij}^{\ast}}s^{-1/2}\left(\mathcal{O}(h)+\mathcal{O}_{\lambda}((sh)^2+\mathcal{O}_{\lambda}(1)\right)|rD_{ij}^2w|^2\,dt\\
    &+{ \sum_{j=1}^{n}}\E\int_{Q_j^{\ast}}\left(s^{-1/2}\left(\mathcal{O}(h)+\mathcal{O}_{\lambda}((sh)^2+\mathcal{O}_{\lambda}(1)\right)+\mathcal{O}_{\lambda}(1)+s\mathcal{O}(sh)\right)|rD_jw|^2\,dt.
\end{align*}
Finally, there exist $\varepsilon_3(\lambda)>0$ and $s_3\geq1$ such that combining \eqref{eq:inequalitiAF} and \eqref{eq:estimateD2} for $s\geq s_3$, $0<h<h_3$, and $0<sh\leq \varepsilon_3(\lambda)$, the desired inequality is obtained
\end{proof}
The two previous lemmas allow us to obtain a lower bound for the inequality \eqref{eq:finalestimateinz2} in terms of $w$. Now, we need to obtain an upper bound for the same inequality. To achieve this, we only need to estimate the second term on the right-hand side of the inequality \eqref{eq:finalestimateinz2}, since the other terms are immediately available in terms of $w$. 
\\
{ 
From \cite[Lemma 2.10]{LPP:2025}, we first obtain that
\begin{equation*}
        \begin{split}
        \E\int_0^T&\int_{G_{1}\cap\mathcal{M}_i^{\ast}}s\varphi\lambda^2|D_iz|^2\,dt\\
        &\leq\, \mathcal{C}\left(\E\int_0^T\int_{G_{1}\cap\mathcal{M}_i^{\ast}}s\varphi\lambda^2\,|rD_iw|^2\,dt+\E\int_0^T\int_{G_{1}\cap \mathcal{M}_{i}^{\ast}}s\mathcal{O}_{\lambda}((sh)^2)\,|D_iz|^2\,dt\right.\\
        &\left. \E\int_0^T\int_{G_{1}\cap\mathcal{M}}s^3\varphi^3\lambda^4|z|^2\,dt+\E\int_0^T\int_{G_{1}\cap\mathcal{M}}s^3\mathcal{O}_{\lambda}((sh)^2)\,|z|^2\,dt\right)
    \end{split}
\end{equation*}
for $sh\leq \varepsilon_0$ and $i=1,..,n$. Moreover, taking into account that $\theta(t)$ satisfies \eqref{theta-delta}, $\left.z\right|_{t=0}=0$ and using \cite[Lemma 2.12]{LPP:2025}, we have
\begin{equation*}
    \begin{split}
            \sum_{i=1}^{n}\E\int_0^T\int_{G_{1}\cap\mathcal{M}_{i}^{\ast}}s\lambda^{2}_{1}\varphi e^{2s\varphi}\,|D_{i}w|^{2}\,dt\leq \sum_{i=1}^{n}&\E\int_0^T\int_{G_{0}\cap\mathcal{M}_{i}^{\ast}}s\lambda^{2}_{1}\varphi e^{2s\varphi}\,|D_{i}w|^{2}\,dt\\
            \leq \mathcal{C}_{\tau_3,\varepsilon_0}\left(\E\int_0^T\int_{G_{0}\cap\mathcal{M}}s^3\lambda^2\varphi ^3e^{2s\varphi}\,|w|^2\,dt+\E\int_0^T\right.&\int_{G_{0}\cap\mathcal{M}}\lambda^{-2}e^{2s\varphi}|f|^2\\
            &\left.+\left.\E\int_{G_{0}\cap\mathcal{M}}s\varphi e^{2s\varphi}\,|w|^2\right|_{t=T}\right).
        \end{split}
\end{equation*}
Thus, combining the above two inequality, we can conclude that
\begin{equation}
    \begin{split}
        \E\int_0^T&\int_{G_{1}\cap\mathcal{M}_i^{\ast}}s\varphi\lambda^2|D_iz|^2\,dt \leq \mathcal{C}_{\tau_3,\varepsilon_3}\left(\E\int_0^T\int_{G_{0}\cap\mathcal{M}}s^3\lambda^2\varphi ^3e^{2s\varphi}\,|w|^2\,dt\right.\\
            &\left.+\E\int_0^T\int_{G_{0}\cap\mathcal{M}}\lambda^{-2}e^{2s\varphi}|f|^2+\left.\E\int_{G_{0}\cap\mathcal{M}}s\varphi e^{2s(T)\varphi}\,|w|^2\right|_{t=T}\right)
    \end{split}
\end{equation}
for $sh<\varepsilon_0$ and $i=1,\ldots,n$.} Consequently, applying the inequality mentioned above, \eqref{eq:finalestimateinz2}, Lemma \ref{lem:inequalityDz-Dw} and Lemma \ref{lem:inequalityDDz}, we can determine that for $s_0=\max\{s_1,s_2,s_3\}$ and $h_0=\min\{h_1,h_2,h_3\}$, the desired Carleman estimate is obtained.
%%%%%%%%%%%%%%%%%%%%%%%%%%%%%%%%%%%%%%%%%%%%%%%%%%%%%%%%%%%%%%%%%%%%%%%%%%%%%%%%%%%%%%%%%%%%%%
%%%%%%%%%%%%%%%%%%%
\section{Technical steps for the cross-product estimate}\label{sec:proof:carleman}

This section derives additional estimates not covered in \cite{LPP:2025}, which are essential to establish lemmas \ref{lem:dz} and \ref{lem:AdditionalsTerms}.

We begin by estimating $I_{61}$. Let us define $C_6 z = -2 s \Delta_{\gamma}(\varphi) z$ and $B_1 z = dz$. Then
\begin{equation*}
2\E\int_{Q}C_6zB_1z=-2\E\int_{Q}2s\Delta_{\gamma}(\varphi)zdz:=\, I_{61}.
\end{equation*}
Applying Itô's formula yields the following result
\begin{equation*}
I_{61}=-2\left.\E\int_{Q}s\Delta_{\gamma}(\varphi)|z|^2\right|_0^T+2\E\int_{Q}\partial_{t}(s)\Delta_{\gamma}(\varphi)|z|^2\,dt+\E\int_{Q}s\Delta(\varphi)|dz|^2.
\end{equation*}
Observing that $\Delta_{\gamma}(\varphi) = \lambda^2 \varphi |\nabla_{\gamma} \psi|^2 + \lambda \varphi \mathcal{O}(1)$ and $|\theta_t| \leq \lambda \beta T \theta$, we deduce
\begin{equation*}
I_{61}\geq\,\E\int_{Q}s\lambda^2\varphi|\nabla_{\gamma}\psi|^2|dz|^2+X_{61}
\end{equation*}
with
\begin{equation*}
\begin{aligned}
X_{61}:=\,&-2\left.\E\int_{Q}s(\lambda^2\varphi|\nabla_{\gamma}\psi|^2+\lambda \varphi \mathcal{O}(1))|z|^2\right|_0^T 
+\E\int_{Q}s\lambda\varphi\mathcal{O}(1)|dz|^2\\
&-2\E\int_{Q}s^2\lambda\beta T(\lambda^2\varphi|\nabla_{\gamma}\psi|^2+\lambda \varphi \mathcal{O}(1))|z|^2\,dt.
\end{aligned}
\end{equation*}

We now proceed to estimate $I_{62}$. For this purpose, we set $C_6 z = -2 s \Delta_{\gamma}(\varphi) z$, $\displaystyle B_2 z \, dt = -2 \sum_{i=1}^{n} \gamma_i r D_i A_i \rho \, D_i A_i z \, dt.$ 
Defining $\beta_{62}^i = 4 s \gamma_i \Delta_{\gamma}(\varphi) r D_i A_i \rho$ and applying the discrete integration by parts formula \eqref{eq:int:dif}, we obtain the following
\begin{align*} 
\E \int_Q \beta_{62}^i \, z \, D_i(A_i z) \, dt 
&= \E \int_{Q_i^\ast} D_i (\beta_{62}^i z) A_i z \, dt + \int_{\partial_i Q} \beta_{62}^i t_r^i(A_i z) z \, \nu_i \, dt \\
&= \E \int_{Q_i^\ast} D_i (\beta_{62}^i) |A_i z|^2 \, dt - \frac{1}{2} \E \int_{Q_i^\ast} A_i (\beta_{62}^i) D_i(|z|^2) \, dt,
\end{align*}
where we used $z = 0$ on $\partial_i Q$ and the product rule \eqref{eq:difference:product}.

Next, applying the inequality \eqref{eq:inequalityAverange} and the integration by parts formula for the average operator \eqref{eq:int:ave}, we arrive at
\begin{align*}
\E \int_Q \beta_{62}^i \, z \, D_i(A_i z) \, dt 
&\geq \E \int_{Q_i^\ast} D_i(\beta_{62}^i) A(|z|^2) \, dt + \frac{1}{2} \int_Q D_i A_i (\beta_{62}^i) |z|^2 \, dt \\
&\geq - \frac{1}{2} \E \int_Q A_i D_i (\beta_{62}^i) |z|^2 \, dt - \frac{h}{2} \int_{\partial_i Q} |z|^2 \, t_r^i(D_i(\beta_{62}^i)) \, dt.
\end{align*}
Using the results of \cite{AA:perez:2024} and the boundary condition $z = 0$ on $\partial_i Q$, we finally deduce
\[
I_{62} \geq 2 \, \E \int_Q \big( s^2 \lambda^3 \varphi^2 + s^2 \lambda^4 \varphi^2 \big) |\nabla_{\gamma_i} \psi|^4 |z|^2 \, dt + \E \int_Q s \, \mathcal{O}_\lambda(1) (sh)^2.
\]
Finally, we focus on the estimate of $I_{63}$. Recall that $B_3 z = - C_6 z = -2 s \Delta_{\gamma}(\varphi) z.$ Using the expression $\Delta_{\gamma}(\varphi) = \lambda^2 \varphi |\nabla_{\gamma} \psi|^2 + \mathcal{O}_{\lambda}(1)$ and the triangle inequality, we obtain the following.
\begin{equation*}
I_{63} = -4 \, \E \int_Q s^2 |\Delta_{\gamma} \varphi|^2 \, |z|^2 \, dt 
\geq -4 \, \E \int_Q s^2 \lambda^4 \varphi^2 |\nabla_{\gamma} \psi|^4 \, |z|^2 \, dt 
- 4 \, \E \int_Q s^2|\mathcal{O}_{\lambda}(1)|^2 \, |z|^2 \, dt.
\end{equation*}
{ 
\section{Technical steps for return to the original variable for the case boundary data in the right-hand side}\label{appendix:additionalreturn}
Recalling that $w=\rho w$ and by Proposition \ref{pro:product} and \cite[Lemma 3.7]{BHLR:2010b}, we have $rD_iw=D_iz+\mathcal{O}_{\lambda}((sh)^2)D_iz-\lambda s\varphi(\partial_i\psi)A_i(z)+s\mathcal{O}_{\lambda}((sh)^2)A_i(z) $. We obtain the following
\begin{align*}
     \sum_{i=1}^{n}\E\int_0^T\int_{\Gamma}&s\lambda_{1}\varphi t_{r}^{i}(|D_iz|^2)\,dt\leq\sum_{i=1}^{n}\left(\E\int_0^T\int_{\Gamma_{+}}s\lambda_{1}\varphi t_{r}^{i}(|rD_i(w)|^2)dt\right.\\
     &\left.+\E\int_0^T\int_{\Gamma_+} s\mathcal{O}_{\lambda}((sh)^2)t_r^i(D_iz)dt+\E\int_0^T\int_{\Gamma_+}\lambda^3 s^3\varphi^3(\partial_i\psi)^2t_r^i(|A_i(z)|^2)dt\right.\\
     &\left.+\E\int_0^T\int_{\Gamma_+}s^3|\mathcal{O}_{\lambda}((sh)^2)|^2 t_r^i(|A_i(z)|^2)dt\right).
\end{align*}
Now, as $z=0$ on $\partial Q$ we notice that $t_r^i(|A_iz|^2)=\frac{h^2}{2}t_r^i(|D_iz|^2)$. Then, we can see that
\begin{align*}
     \sum_{i=1}^{n}\E\int_0^T&\int_{\Gamma}s\lambda_{1}\varphi t_{r}^{i}(|D_iz|^2)\,dt\leq\sum_{i=1}^{n}\left(\E\int_0^T\int_{\Gamma_{+}}s\lambda_{1}\varphi t_{r}^{i}(|rD_i(w)|^2)dt\right.\\
     &\left.+\E\int_0^T\int_{\Gamma_+} s\mathcal{O}_{\lambda}((sh)^2)t_r^i(D_iz)dt+\E\int_0^T\int_{\Gamma_+}s|\mathcal{O}_{\lambda}((sh)^6)| t_r^i(|D_i(z)|^2)dt\right).
\end{align*}
Therefore, by the previous inequality we can find $\varepsilon_3(\lambda)>0$ and $s_3>1$ such that for $s\geq s_3$, $0<h<h_3$ and $0<sh\leq \varepsilon_3(\lambda)$, we have
\begin{align*}
    \E\int_{Q}&|rf|^2\,dt+\sum_{i=1}^{n}\E\int_0^T\int_{\Gamma}s\lambda_{1}t_{r}^{i}(|D_iz|^2)\,dt+\left.\E\int_{\mathcal{M}}s^2|z|^2\right|_{t=T}\\
    \leq&\mathcal{C}_{\tau_3,\varepsilon_3}\left( \E\int_{Q}|rf|^2\,dt+\sum_{i=1}^{n}\E\int_0^T\int_{\Gamma_{+}}s\lambda_{1}\varphi t_{r}^{i}(|rD_i(w)|^2)dt+\left.\E\int_{\mathcal{M}}s^2|rw|^2\right|_{t=T}\right).
\end{align*}
}
%%%%%%%%%%%
\bibliographystyle{abbrv}
\bibliography{references}

@article{WZZW:2025,
  author        = {Wu, Bin and Zhu, Xu and Zhou, Wenwen and Wang, Zewen},
  title         = {Lipschitz stability for an inverse problem of a fully-discrete stochastic hyperbolic equation},
  journal       = {arXiv preprint arXiv:2512.07072},
  year          = {2025},
  eprint        = {2512.07072},
  archivePrefix = {arXiv},
  url           = {https://arxiv.org/abs/2512.07072}
}

@article {Zho92,
    AUTHOR = {Zhou, Xun Yu},
     TITLE = {A duality analysis on stochastic partial differential
              equations},
   JOURNAL = {J. Funct. Anal.},
  FJOURNAL = {Journal of Functional Analysis},
    VOLUME = {103},
      YEAR = {1992},
    NUMBER = {2},
     PAGES = {275--293},
      ISSN = {0022-1236},
   MRCLASS = {60H15 (34F05 35R60 47N20 47N30)},
  MRNUMBER = {1151549},
       DOI = {10.1016/0022-1236(92)90122-Y},
       URL = {https://doi.org/10.1016/0022-1236(92)90122-Y},
}

@article {KR77,
    AUTHOR = {Krylov, N. V. and Rozovski\u{\i}, B. L.},
     TITLE = {The {C}auchy problem for linear stochastic partial
              differential equations},
   JOURNAL = {Izv. Akad. Nauk SSSR Ser. Mat.},
  FJOURNAL = {Izvestiya Akademii Nauk SSSR. Seriya Matematicheskaya},
    VOLUME = {41},
      YEAR = {1977},
    NUMBER = {6},
     PAGES = {1329--1347, 1448},
      ISSN = {0373-2436},
   MRCLASS = {60H15 (60G35)},
  MRNUMBER = {0501350},
MRREVIEWER = {O. A. Glonti},
}

@article{AA:perez:2024,
      title={Asymptotic behavior of {C}arleman weight functions}, 
    journal = {\href{https://arxiv.org/abs/2412.19892}{arXiv:2412.19892}},
      author={Ariel A. P\'erez},
      year={2024},
    eprint={2412.19892},
      archivePrefix={arXiv},
      primaryClass={},
      url={https://arxiv.org/abs/2403.19413}, 
}

@article {zhao:2024,
    AUTHOR = {Zhao, Qingmei},
     TITLE = {Null controllability for stochastic semidiscrete parabolic
              equations},
   JOURNAL = {SIAM J. Control Optim.},
  FJOURNAL = {SIAM Journal on Control and Optimization},
    VOLUME = {63},
      YEAR = {2025},
    NUMBER = {3},
     PAGES = {2007--2028},
      ISSN = {0363-0129,1095-7138},
   MRCLASS = {93B05 (60H15)},
  MRNUMBER = {4917058},
       DOI = {10.1137/24M1639166},
       URL = {https://doi.org/10.1137/24M1639166},
}

@book {klebaner2012introduction,
    AUTHOR = {Klebaner, Fima C.},
     TITLE = {Introduction to stochastic calculus with applications},
   EDITION = {Third},
 PUBLISHER = {Imperial College Press, London},
      YEAR = {2012},
     PAGES = {xiv+438},
      ISBN = {978-1-84816-832-9; 1-84816-832-2},
   MRCLASS = {60-01 (60H05 60H30 60J65 60J70 60J75 60J85)},
  MRNUMBER = {2933773},
       DOI = {10.1142/p821},
       URL = {https://doi.org/10.1142/p821},
}

@article {LOPD:2023,
    AUTHOR = {Lecaros, Rodrigo and Ortega, Jaime H. and P\'{e}rez, Ariel and
              De Teresa, Luz},
     TITLE = {Discrete {C}alder\'{o}n problem with partial data},
   JOURNAL = {Inverse Problems},
  FJOURNAL = {Inverse Problems. An International Journal on the Theory and
              Practice of Inverse Problems, Inverse Methods and Computerized
              Inversion of Data},
    VOLUME = {39},
      YEAR = {2023},
    NUMBER = {3},
     PAGES = {Paper No. 035001, 28},
      ISSN = {0266-5611,1361-6420},
   MRCLASS = {42B37 (35R30 42B10 65M06)},
  MRNUMBER = {4541733},
       DOI = {10.1088/1361-6420/acb0f8},
       URL = {https://doi.org/10.1088/1361-6420/acb0f8},
}

@article {boyer-2010-1d-elliptic,
    AUTHOR = {Boyer, Franck and Hubert, Florence and Le Rousseau, J\'{e}r\^{o}me},
     TITLE = {Discrete {C}arleman estimates for elliptic operators and
              uniform controllability of semi-discretized parabolic
              equations},
   JOURNAL = {J. Math. Pures Appl. (9)},
  FJOURNAL = {Journal de Math\'{e}matiques Pures et Appliqu\'{e}es. Neuvi\`eme S\'{e}rie},
    VOLUME = {93},
      YEAR = {2010},
    NUMBER = {3},
     PAGES = {240--276},
      ISSN = {0021-7824},
   MRCLASS = {93B05 (35K10 65M06 93C20)},
  MRNUMBER = {2601332},
       DOI = {10.1016/j.matpur.2009.11.003},
       URL = {https://doi-org.uchile.idm.oclc.org/10.1016/j.matpur.2009.11.003},
}

@article {boyer-2014,
    AUTHOR = {Boyer, Franck and Le Rousseau, J\'{e}r\^{o}me},
     TITLE = {Carleman estimates for semi-discrete parabolic operators and
              application to the controllability of semi-linear
              semi-discrete parabolic equations},
   JOURNAL = {Ann. Inst. H. Poincar\'{e} Anal. Non Lin\'{e}aire},
  FJOURNAL = {Annales de l'Institut Henri Poincar\'{e}. Analyse Non Lin\'{e}aire},
    VOLUME = {31},
      YEAR = {2014},
    NUMBER = {5},
     PAGES = {1035--1078},
      ISSN = {0294-1449},
   MRCLASS = {35K58 (35B45 93B05)},
  MRNUMBER = {3258365},
MRREVIEWER = {Joseph L. Shomberg},
       DOI = {10.1016/j.anihpc.2013.07.011},
       URL = {https://doi-org.uchile.idm.oclc.org/10.1016/j.anihpc.2013.07.011},
}

@article {BHLR:2010b,
    AUTHOR = {Boyer, Franck and Hubert, Florence and Le Rousseau, J\'{e}r\^{o}me},
     TITLE = {Discrete {C}arleman estimates for elliptic operators in
              arbitrary dimension and applications},
   JOURNAL = {SIAM J. Control Optim.},
  FJOURNAL = {SIAM Journal on Control and Optimization},
    VOLUME = {48},
      YEAR = {2010},
    NUMBER = {8},
     PAGES = {5357--5397},
      ISSN = {0363-0129},
   MRCLASS = {35K20 (35P15 65M06 93B05 93B07)},
  MRNUMBER = {2745778},
       DOI = {10.1137/100784278},
       URL = {https://doi-org.uchile.idm.oclc.org/10.1137/100784278},
}

@article {EDG:2011,
    AUTHOR = {Ervedoza, S. and de Gournay, F.},
     TITLE = {Uniform stability estimates for the discrete {C}alder\'on
              problems},
   JOURNAL = {Inverse Problems},
  FJOURNAL = {Inverse Problems. An International Journal on the Theory and
              Practice of Inverse Problems, Inverse Methods and Computerized
              Inversion of Data},
    VOLUME = {27},
      YEAR = {2011},
    NUMBER = {12},
     PAGES = {125012, 37},
      ISSN = {0266-5611,1361-6420},
   MRCLASS = {78A46 (35B35 35J25 65N06 78A70)},
  MRNUMBER = {2854330},
MRREVIEWER = {Yongzhi\ Xu},
       DOI = {10.1088/0266-5611/27/12/125012},
       URL = {},
}

@book {fursikov-1996,
    AUTHOR = {Fursikov, A. V. and Imanuvilov, O. Yu.},
     TITLE = {Controllability of evolution equations},
    SERIES = {Lecture Notes Series},
    VOLUME = {34},
 PUBLISHER = {Seoul National University, Research Institute of Mathematics,
              Global Analysis Research Center, Seoul},
      YEAR = {1996},
     PAGES = {iv+163},
   MRCLASS = {93-02 (35B37 35Q30 93B05 93C20)},
  MRNUMBER = {1406566},
MRREVIEWER = {Vilmos Komornik},
}

@article{Yamamoto_2009,
doi = {10.1088/0266-5611/25/12/123013},
url = {https://dx.doi.org/10.1088/0266-5611/25/12/123013},
year = {2009},
month = {dec},
publisher = {},
volume = {25},
number = {12},
pages = {123013},
author = {Yamamoto, Masahiro},
title = {Carleman estimates for parabolic equations and applications},
journal = {Inverse Problems},
abstract = {In this review, concerning parabolic equations, we give self-contained descriptions on   
derivations of Carleman estimates;
   
methods for applications of the Carleman estimates to estimates of solutions and to inverse problems.
  Moreover limited to parabolic equations, we survey the previous and recent results in view of the applicability of the Carleman estimate. We do not intend to pursue any general treatments of the Carleman estimate itself but by showing it in a direct manner, we mainly aim to demonstrate the applicability of the Carleman estimate to the estimation of solutions and inverse problems.}
}

@article{QLu:2012,
doi = {10.1088/0266-5611/28/4/045008},
url = {https://dx.doi.org/10.1088/0266-5611/28/4/045008},
year = {2012},
month = {03},
publisher = {IOP Publishing},
volume = {28},
number = {4},
pages = {045008},
author = {Lü, Qi},
title = {Carleman estimate for stochastic parabolic equations and inverse stochastic parabolic problems},
journal = {Inverse Problems},
abstract = {In this paper, we establish a global Carleman estimate for stochastic parabolic equations. Based on this estimate, we study two inverse problems for stochastic parabolic equations. One is concerned with a determination problem of the history of a stochastic heat process through the observation at the final time T for which we obtain a conditional stability estimate. The other is an inverse source problem with observation on the lateral boundary. We derive the uniqueness of the source.}
}

@article{ref37,
url = {https://doi.org/10.1515/jiip-2017-0003},
title = {Inverse problems for stochastic parabolic equations with additive noise},
author = {Ganghua Yuan},
pages = {93--108},
volume = {29},
number = {1},
journal = {Journal of Inverse and Ill-posed Problems},
doi = {doi:10.1515/jiip-2017-0003},
year = {2021},
lastchecked = {2025-02-12}
}

@article{ref1,
doi = {10.1088/0266-5611/26/7/074014},
url = {https://dx.doi.org/10.1088/0266-5611/26/7/074014},
year = {2010},
month = {07},
publisher = {},
volume = {26},
number = {7},
pages = {074014},
author = {Bao, Gang and Chow, Shui-Nee and Li, Peijun and Zhou, Haomin},
title = {Numerical solution of an inverse medium scattering problem with a stochastic source},
journal = {Inverse Problems},
abstract = {This paper is concerned with the inverse medium scattering problem with a stochastic source, the reconstruction of the refractive index of an inhomogeneous medium from the boundary measurements of the scattered field. As an inverse problem, there are two major difficulties in addition to being highly nonlinear: the ill-posedness and the presence of many local minima. To overcome these difficulties, a stable and efficient recursive linearization method has been recently developed for solving the inverse medium scattering problem with a deterministic source. Compared to classical inverse problems, stochastic inverse problems, referred to as inverse problems involving uncertainties, have substantially more difficulties due to randomness and uncertainties. Based on the Wiener chaos expansion, the stochastic problem is converted into a set of decoupled deterministic problems. The strategy developed is a new hybrid method combining the WCE with the recursive linearization method for solving the inverse medium problem with a stochastic source. Numerical experiments are reported to demonstrate the effectiveness of the proposed approach.}
}

@article{ref2,
doi = {10.1088/0266-5611/29/1/015006},
url = {https://dx.doi.org/10.1088/0266-5611/29/1/015006},
year = {2012},
month = {12},
publisher = {IOP Publishing},
volume = {29},
number = {1},
pages = {015006},
author = {Bao, Gang and Xu, Xiang},
title = {An inverse random source problem in quantifying the elastic modulus of nanomaterials},
journal = {Inverse Problems},
abstract = {Nanotechnology deals with structures sized between 1 and 100 nanometer and involves manipulating and controlling materials or devices within that scale. Due to its small size, it is difficult to quantify the mechanical properties of nanomaterials which significantly rely on measurement techniques, conditions and environment. In this paper, to describe the elastic deformation of nanobelts obtained from an atomic force microscope (AFM) under contact mode, we propose a novel model based on a Euler–Bernoulli equation with a stochastic source term accounting for effects of initial bending, surface roughness and white noise during measurements. With the random source, the forward problem is demonstrated to have a unique and explicit path-wise solution. Furthermore, the inverse problem consists of identifying the elastic modulus of nanobelts and reconstructing the random source structure, i.e. the mean and the variance. Based upon explicit formulae of direct problems, the corresponding inverse problem can be reduced into a first kind of Fredholm-type integral equation where two kinds of regularization techniques are applied to obtain stable solutions, i.e. Tikhonov regularization (TR) for smooth solutions and a statistical regularization method from Bayes' formula and maximum likelihood estimation for discontinuous solutions, respectively. Numerical examples are presented to illustrate the validity and effectiveness of the proposed methods.}
}

@misc{ref13,
      title={Stability and regularization for ill-posed {C}auchy problem of a stochastic parabolic differential equation}, 
      author={Fangfang Dou and Peimin Lü and Yu Wang},
      year={2024},
      eprint={2308.15741},
      archivePrefix={arXiv},
      primaryClass={math.NA},
      url={https://arxiv.org/abs/2308.15741}, 
}

@article{LZ:2024,
    AUTHOR = {L\"u, Qi and Zhang, Xu},
     TITLE = {Inverse problems for stochastic partial differential
              equations: some progresses and open problems},
   JOURNAL = {Numer. Algebra Control Optim.},
  FJOURNAL = {Numerical Algebra, Control and Optimization},
    VOLUME = {14},
      YEAR = {2024},
    NUMBER = {2},
     PAGES = {227--272},
      ISSN = {2155-3289,2155-3297},
   MRCLASS = {60H15 (35L04 35R30 35R60)},
  MRNUMBER = {4729334},
       DOI = {10.3934/naco.2023014},
       URL = {https://doi.org/10.3934/naco.2023014},
}

@article {WWW:2024,
    AUTHOR = {Wu, Bin and Wang, Ying and Wang, Zewen},
     TITLE = {Carleman estimates for space semi-discrete approximations of
              one-dimensional stochastic parabolic equation and its
              applications},
   JOURNAL = {Inverse Problems},
  FJOURNAL = {Inverse Problems. An International Journal on the Theory and
              Practice of Inverse Problems, Inverse Methods and Computerized
              Inversion of Data},
    VOLUME = {40},
      YEAR = {2024},
    NUMBER = {11},
     PAGES = {Paper No. 115003, 33},
      ISSN = {0266-5611,1361-6420},
   MRCLASS = {65C30 (35K20 35R30 35R60 60H15)},
  MRNUMBER = {4814264},
}

@book{lu2021mathematical,
  title={Mathematical Control Theory for Stochastic Partial Differential Equations},
  author={Qi Lü and Xu Zhang},
  series={Probability Theory and Stochastic Modelling},
  volume={101},
  year={2021},
  publisher={Springer Nature Switzerland AG},
  doi={10.1007/978-3-030-82331-3},
  isbn={978-3-030-82330-6}
}

@article {LLRP:2025,
    AUTHOR = {Lecaros, Rodrigo and L\'opez-R\'ios, Juan and P\'erez, Ariel
              A.},
     TITLE = {Lipschitz stability and reconstruction in inverse problems for
              semi-discrete parabolic operators},
   JOURNAL = {Inverse Problems},
  FJOURNAL = {Inverse Problems. An International Journal on the Theory and
              Practice of Inverse Problems, Inverse Methods and Computerized
              Inversion of Data},
    VOLUME = {41},
      YEAR = {2025},
    NUMBER = {11},
     PAGES = {Paper No. 115012, 30},
      ISSN = {0266-5611,1361-6420},
   MRCLASS = {35R30 (35K10)},
  MRNUMBER = {4988309},
       DOI = {10.1088/1361-6420/ae1bcb},
       URL = {https://doi.org/10.1088/1361-6420/ae1bcb},
}

@article{LPP:2025,
    AUTHOR = {Lecaros, Rodrigo and P\'erez, Ariel A. and Prado, Manuel F.},
     TITLE = {Carleman {E}stimate for {S}emi-discrete {S}tochastic
              {P}arabolic {O}perators in {A}rbitrary {D}imension and
              {A}pplications to {C}ontrollability},
   JOURNAL = {Appl. Math. Optim.},
  FJOURNAL = {Applied Mathematics and Optimization},
    VOLUME = {93},
      YEAR = {2026},
    NUMBER = {1},
     PAGES = {Paper No. 12},
      ISSN = {0095-4616,1432-0606},
   MRCLASS = {93B05 (65M06 93B07 93C20)},
  MRNUMBER = {5006586},
       DOI = {10.1007/s00245-025-10364-1},
       URL = {https://doi.org/10.1007/s00245-025-10364-1},
}

@article {Emanuilov:1995,
    AUTHOR = {\`Emanuilov, O. Yu.},
     TITLE = {Controllability of parabolic equations},
   JOURNAL = {Mat. Sb.},
  FJOURNAL = {Matematicheski\u i\ Sbornik},
    VOLUME = {186},
      YEAR = {1995},
    NUMBER = {6},
     PAGES = {109--132},
      ISSN = {0368-8666,2305-2783},
   MRCLASS = {93B05 (35K55 93C20)},
  MRNUMBER = {1349016},
MRREVIEWER = {Sergei\ A.\ Ivanov},
       DOI = {10.1070/SM1995v186n06ABEH000047},
       URL = {https://doi.org/10.1070/SM1995v186n06ABEH000047},
}
%%%%%%%%%%
\end{document}